\numberwithin{equation}{section}
\definecolor{lightblue}{HTML}{044E9E}
\newcommand\norm[1]{\left\lVert#1\right\rVert}
\newcommand{\1}{\boldsymbol{1}}
\newcommand{\RR}{\mathbb{R}}
\newcommand{\PP}{\mathbb{P}}
\newcommand{\EE}{\mathbb{E}}
\newcommand{\HH}{\mathbb{H}}
\newcommand{\NN}{\mathbb{N}}
\newcommand{\cls}{\mathcal{S}}
\newcommand{\clc}{\mathcal{C}}
\newcommand{\clr}{\mathcal{R}}
\newcommand{\clh}{\mathcal{H}}
\newcommand{\clt}{\mathcal{T}}
\newcommand{\cll}{\mathcal{L}}
\newcommand{\mfh}{\mathfrak{H}}
\newcommand{\clp}{\mathcal{P}}
\newcommand{\Var}{\operatorname{Var}}
\newcommand{\cli}{\mathcal{I}}
\newcommand{\Om}{\Omega}
\newcommand{\Lip}{\operatorname{Lip}}
\newcommand{\op}{\operatorname{op}}
\newcommand {\tr}{\operatorname{Tr}}
\newcommand{\Deriv}{D}
\newcommand{\Id}{\operatorname{Id}}
\definecolor{expcol}{rgb}{1.0,0.0,0.5}
\definecolor{ecol}{rgb}{0.0, 0.0, 1.0}
\xpatchcmd{\proof}{\@addpunct{.}}{\@addpunct{:}}{}{}
\newtheorem{theorem}{Theorem}[section]
\newtheorem{proposition}{Proposition}[section]
\newtheorem{lemma}{Lemma}[section]
\newtheorem{corollary}{Corollary}[section]
\theoremstyle{definition}
\newtheorem{remark}{Remark}[section]
\newtheorem{example}{Example}[section]
\DeclareFontFamily{U}{mathx}{\hyphenchar\font45}
\DeclareFontShape{U}{mathx}{m}{n}{<-> mathx10}{}
\DeclareSymbolFont{mathx}{U}{mathx}{m}{n}
\DeclareMathAccent{\widebar}{0}{mathx}{"73}
\newcommand{\mockalph}[1]{}
\begin{document}

\def\spacingset#1{\renewcommand{\baselinestretch}%
{#1}\small\normalsize} \spacingset{1}

\newtheorem*{assumptionBIC*}{\assumptionnumber}
\providecommand{\assumptionnumber}{}
\makeatletter
\newenvironment{assumptionBIC}[2]
 {%
  \renewcommand{\assumptionnumber}{Assumption #1#2}%
  \begin{assumptionBIC*}%
  \protected@edef\@currentlabel{#1#2}%
 }
 {%
  \end{assumptionBIC*}
 }
\makeatother


\title{The Fourth-Moment Theorem on Hilbert Spaces}

\author{
Marie-Christine D\"uker \\ FAU Erlangen-N\"urnberg                             \and
Pavlos Zoubouloglou      \\ University of M\"unster}
\date{\today}

\maketitle

\bigskip

\begin{abstract}
\noindent
In this work, we establish conditions ensuring convergence in distribution of a sequence admitting a Wiener–Itô chaos representation to a nondegenerate Gaussian measure on a separable Hilbert space. Our first main result shows that, assuming convergence of the associated covariance operators in the trace-class norm, a sequence lying in a fixed Wiener–Itô chaos converges in distribution if and only if its fourth weak moments converge to the corresponding Gaussian moments. For general sequences with infinite chaos expansions, we derive analogous sufficient conditions for convergence in distribution. A key ingredient in our approach is a Stein–Malliavin bound formulated with respect to a distance that metrizes weak convergence of probability measures on separable Hilbert spaces. 
The results are infinite-dimensional extensions of the classical real-valued Fourth-Moment Theorem of Nualart and Peccati [Ann. Probab. 33, 177–193 (2005)]. 
Our work builds upon the work by Bourguin and Campese [Electron. J. Probab. 25, 1–30 (2020)] who claimed a Fourth-Moment Theorem in separable Hilbert spaces. However, a recent work by Bassetti, Bourguin, Campese, and Peccati [arXiv:2509.13427 (2025)] showed that the distance employed in the former article does not metrize weak convergence of probability measures on separable Hilbert spaces. Consequently, the conditions stated in Bourguin and Campese are not sufficient to recover a valid Fourth-Moment Theorem in the Hilbert-space setting. 
\end{abstract}

\section{Introduction}

The Fourth-Moment Theorem, introduced by Nualart and Peccati in \cite{nualart2005central}, provides a strikingly simple criterion for normal convergence within a fixed Wiener chaos. Let $q \ge 2$ be an integer, and let $F_n = I_q(f_n)$
denote a sequence of multiple Wiener-Itô integrals of order $q$ with respect to a standard Brownian motion, where each kernel $f_n \in L^2(\RR_+^q)$ is symmetric and standardized so that $\EE[F_n^2] = q!  \|f_n\|^2_{L^2(\RR_+^q)} = 1$.
The theorem states that $\{F_n\}_{n\ge 1}$ converges in distribution to a standard normal random variable if and only if the fourth moments converge to those of a standard Gaussian,
\begin{equation} \label{eq:fourth-moment-Pec-Nual}
\EE[F_n^4] \to 3, \quad \text{ as } n \to \infty.
\end{equation} 
This remarkable result reduces the challenging problem of weak convergence within a fixed Wiener chaos to a simple condition on the fourth moments. Since its discovery, the Fourth-Moment Theorem has stimulated a wide range of research, leading to numerous applications and quantitative bounds under various distances that metrize weak convergence of probability measures on $\RR^d$.
\par
An important extension of the classical Fourth-Moment Theorem concerns sequences of random variables taking values in a real separable Hilbert space $\clh$. In \cite{Bou20}, Bourguin and Campese were the first to claim a Fourth-Moment Theorem in such Hilbert spaces. 
Unfortunately, some of the main statements in \cite{Bou20} were later shown to be incorrect. As recently demonstrated in \cite{BasBurCamPec25}, the distance introduced in \cite{Bou20}, denoted here by $\rho_2$, does not metrize weak convergence of probability measures with respect to the topology of the Hilbert space. Consequently, as pointed out in \cite{BasBurCamPec25}, the results in subsequent works \cite{bourguin2024spherical,BouCamDan2024,cammarota2023quantitative,Cap2024,dukZou24,favaro2023quantitative,vidotto2025functional} that establish $\rho_2$-bounds to infer convergence in distribution in the respective Hilbert spaces are inconclusive.\footnote{The work \cite{vidotto2025functional} is currently unpublished; the authors informed us of their intention to revise it to incorporate bounds for the $d_2$ distance presented below, which does metrize weak convergence.}
\par
While \cite{BasBurCamPec25} focuses on highlighting that the distance $\rho_2$ does not metrize weak convergence of probability measures, we focus here on the validity of the infinite-dimensional Fourth-Moment Theorem of \cite{Bou20}. 
We point out why the conditions in \cite{Bou20} do not suffice, and propose conditions for a Fourth-Moment Theorem in infinite dimensions.
To describe the setting, consider a nondegenerate, centered, $\clh$-valued Gaussian random variable $Z$ and a sequence $\{F_n\}_{n\ge 1}$ of $\clh$-valued multiple Wiener-Itô integrals of fixed order $q$, which will be discussed more in Section \ref{sec:prelim} below. Nondegeneracy here means that $\operatorname{Var}(\langle Z, h \rangle_\clh) > 0$ for all nonzero $h \in \clh$. Then, Theorem 3.12 in \cite{Bou20} claims that, if
\begin{equation} \label{eq:BandC-conditions}
    \EE[\|F_n\|_{\clh}^2] \to \EE[\|Z\|_{\clh}^2], \quad \EE[\|F_n\|_{\clh}^4] \to \EE[\|Z\|_{\clh}^4],
\end{equation} 
as $n \to \infty$, then $F_n \xrightarrow{d} Z$, in the topology of $\clh$. However, as demonstrated in the following counterexample, these moment conditions lead to a certain identifiability problem, and are not sufficient to fully characterize the Gaussian limit.

\begin{example} \label{exp1}
Let $\{e_k\}_{k\in\NN}$ be an orthonormal basis of $\clh$. For $i=1,2$, let $Z_i$ be the centered $\clh$-valued Gaussian random variable with covariance operator
\begin{equation}
\clt_{Z_i} = e_i \otimes e_i, \qquad
\text{ i.e., } \quad \clt_{Z_i}(h)=\langle h,e_i\rangle_{\clh} e_i, \text{ for } h\in\clh.
\end{equation}
Then $\tr(\clt_{Z_i})=1$, so that
\begin{equation}
\EE[\|Z_1\|_{\clh}^2]=\EE[\|Z_2\|_{\clh}^2]=1
\quad\text{and}\quad
\EE[\|Z_1\|_{\clh}^4]=\EE[\|Z_2\|_{\clh}^4]=3.
\end{equation}
Nevertheless, $Z_1$ and $Z_2$ have different laws as $\clh$-valued random variables (their laws are supported on orthogonal one-dimensional subspaces  determined by $e_1$ and $e_2$ respectively). Hence, the conditions
$\EE[\|F_n\|_{\clh}^2] \to \EE[\|Z\|_{\clh}^2]$ and
$\EE[\|F_n\|_{\clh}^4] \to \EE[\|Z\|_{\clh}^4]$
do not identify a unique Gaussian limit.\footnote{Although these random variables are degenerate, we can make them nondegenerate by taking $\widetilde Z_i = N + Z_i$, where $N$ is a centered, nondegenerate Gaussian random variable that is independent of $Z_i$, $i = 1,2$ and such that $\EE \langle N,e_1 \rangle^2 = \EE \langle N,e_2 \rangle^2$. Then, the same considerations hold.}
\end{example}

Our work addresses this issue by identifying two moment conditions that are stronger than the conditions in \eqref{eq:BandC-conditions} and ensure convergence in distribution whenever $F_n$ belongs to a fixed Wiener chaos. 
First, we require convergence of the covariance operators in the trace class norm. This implies that the Gaussian limit is unique and avoids the issue that is raised in Example \ref{exp1}. Second, we require that,
\begin{equation} \label{eq:con-ours1}
    \EE \left[ \langle F_n, e_i \rangle^4_{\clh}\right] \to \EE \left[ \langle Z, e_i \rangle^4_{\clh}\right],
\end{equation}
for a fixed orthonormal basis $\{e_i\}_{i \in \NN}$ of $\clh$. This condition plays a role analogous to the one of Nualart and Peccati in \eqref{eq:fourth-moment-Pec-Nual}. 
The corresponding result is formalized in our first main Theorem~\ref{thm:abstract-fourth-moment-theorem}: the Fourth-Moment Theorem for Hilbert space–valued multiple Wiener integrals of fixed order.
\par
The foundation for the Fourth-Moment Theorem on Hilbert spaces for fixed chaoses and several follow-up results crucially rely on a new Stein–Malliavin bound expressed in terms of the trace class norm, presented in Theorem \ref{thm:1}. 
Our second main result (Theorem \ref{thm:abstract-fourth-moment-theorem-infinite-chaos}) provides sufficient conditions for the convergence in distribution of the sequence $\{F_n\}$ when it admits an infinite chaos expansion. To ensure this convergence, we require that the covariance operators of the individual multiple integrals converge in trace-class norm, so that the sum of the limits defines a trace-class covariance operator characterizing the limiting Gaussian random variable. Furthermore, we impose that the contractions of the underlying kernels vanish asymptotically and satisfy a suitable summability condition in the tails. 
As part of the proof, we derive sufficient conditions for the convergence in distribution of a finite chaos expansion (Theorem \ref{cor:abstract-fourth-moment-theorem-finite-chaos}). We then show that this result suffices to also prove a central limit theorem (CLT) for sequences of vectors of fixed chaos order, i.e., sequences on $\clh^K$; see Theorem \ref{thm:abstract-fourth-moment-theorem-finite-chaos-vector}. These results recover the classical Fourth-Moment Theorem (cf. items (i) and (ii) of Theorem 5.2.7 in \cite{nourdin2012normal}) and its multivariate counterpart as direct corollaries.
Finally, Theorem \ref{thm:quantitative-CLT} provides a quantitative Fourth-Moment Theorem, establishing explicit bounds on the proximity of the laws of $F_n$ and $Z$, expressed in terms of the conditions appearing in the fourth moment theorems.
\par
Our approach brings together several technical ideas required to prove a Fourth-Moment Theorem in infinite dimensions. Our convergence in distribution statements are established with respect to the $d_2$-distance introduced by Giné and León in \cite{GinLeo80}, which metrizes weak convergence of probability measures on Hilbert spaces. 
Since there is no canonical notion of higher-order moments for infinite-dimensional random variables \cite{janson2015higher}, it is not a priori clear which norms are appropriate for bounding the $d_2$-distance.
We analyze the interaction between $d_2$-distance and Stein’s method on abstract Wiener spaces, clarifying in particular the relationship between Malliavin, Gross, and Fréchet derivatives in infinite dimensions. We leverage these tools to derive a new Malliavin–Stein bound (Theorem~\ref{thm:1}), which determines the precise mode of operator convergence. 
However, the arising trace-class norm of the difference of covariance operators is in general hard to compute. To address this, our proofs employ operator decompositions that control finite-dimensional blocks, cross-terms, and infinite-dimensional tails \eqref{eq:summands3-trace}, even for operators that are not necessarily positive semidefinite or self-adjoint (\eqref{eq:Fn-matrix-Gamma}--\eqref{eq:qm-qm-quantitative}), making trace-class estimates tractable. 
\par
Finally, we demonstrate our results and their required conditions in three examples. First, we consider the celebrated Cremers-Kadelka theorem in the theory of stochastic processes, which identifies sufficient conditions for the convergence in distribution of processes taking values in $L^2$ spaces. We obtain in Section \ref{subsec:Cremers-Kadelka} a quantitative version of the Cremers-Kadelka theorem via Theorems \ref{thm:abstract-fourth-moment-theorem} and \ref{thm:quantitative-CLT}. Second, we leverage again Theorem \ref{thm:abstract-fourth-moment-theorem} to prove a CLT for the Kernel Ridge regression estimator arising in statistical theory, when the errors are sampled from a Wiener chaos of fixed order; see Section \ref{subsec:KRR}. In Section \ref{subsec:spde-general} we present the formulation of the stochastic heat equation with Dirichlet boundary conditions as an evolution equation in $L^2$. We then use Corollary \ref{cor1} to obtain weak error estimates for the Galerkin approximation to its solution. Moreover, initiating the equation from a value admitting an infinite chaos expansion, we use Theorem \ref{thm:abstract-fourth-moment-theorem-infinite-chaos} to obtain estimates on the $d_2$-distance between the solution at time $t$ and the invariant measure of the equation. 
\par
As mentioned above, several works build upon the results in \cite{Bou20} including one of our own works \cite{dukZou24}.
In light of the issues identified in \cite{BasBurCamPec25}, the statement for a CLT presented in \cite{dukZou24} is  inconclusive. The results given here build the basis to address the issues that arose in our previous article in future research.

\section{Preliminaries} \label{sec:prelim}

We start by introducing notation and recalling several definitions and technical tools used throughout the paper. Section \ref{subsec:notations} collects basic notation. Section \ref{se:Fre-Gross} introduces two different notions of differentiation of functions in infinite dimensions that are  used in this article: the Fréchet and Gross derivatives. A central aspect of our results is using an appropriate distance between probability measures on infinite-dimensional spaces, which is presented in Section \ref{se:d2-distance}. We also recall aspects of Stein’s method for infinite-dimensional Gaussian approximation in abstract Wiener spaces (Section \ref{se:Stein}). Finally, Section \ref{subsec:mall,ou,pseudo} introduces the Malliavin derivative, the Ornstein–Uhlenbeck semigroup, its generator, and its pseudo-inverse.

\subsection{Notations} \label{subsec:notations}
Let $\clh$ denote a separable Hilbert space equipped with the inner product $\langle \cdot, \cdot \rangle_{\clh}$ and norm $\| \cdot \|_{\clh}$.
Let $\{ e_i \}_{i \in \NN}$ be an orthonormal basis of $\clh$. Throughout this section, $V$ and $W$ denote two separable Banach spaces, endowed with respective norms $\|\cdot \|_V$ and $\|\cdot\|_W$. 
As is customary, we denote by $\cll(V:W)$ the space of all bounded linear operators $T : V \to W$. The space $\cll(V:W)$ is equipped with the norm
\begin{equation}
\|T\|_{\op} \doteq \sup_{\|x\|_V \le 1} \|Tx\|_W,
\end{equation}
so that $(\cll(V:W), \| \cdot \|_{\op})$ is a Banach space. When $V=W$, we write simply $\mathcal{L}(V)$ for the space of bounded linear operators on $V$. Moreover, we write $\cll(V^{\otimes k}:W)$ for the space of multilinear operators from $V^{\otimes k}$ to $W$. When needed, we emphasize the spaces by writing $\|T\|_{\op(\cll(V:W))}$.

Let $T: \clh \to \clh$ be a bounded linear operator. For $p \in [1, \infty)$, define the Schatten $p$-norm as
\begin{equation} \label{def:Schatten-p-norm}
\|T\|^p_{\mathcal{S}_{p}(\clh)} 
\doteq \tr_{\clh}(|T|^p) 
= \sum_{i=1}^{\infty} \langle |T|^p e_i, e_i \rangle_{\clh},
\quad |T| \doteq \sqrt{T^{*}T}.
\end{equation}
Let $s_1(T) \ge s_2(T) \ge \cdots \ge s_n(T) \ge \cdots \ge 0$ denote the singular values of $T$, i.e., the eigenvalues of the Hermitian operator $|T| \doteq \sqrt{T^{*}T}$. Then, 
\begin{equation} \label{def:Schatten-p-norm-singular}
    \|T\|_{\mathcal{S}_{p}(\clh)}
    =
    \left( \sum_{i=1}^\infty s_i^p(T) \right)^{\frac{1}{p}}.
\end{equation}
The space of Schatten $p$-norm operators is denoted by $\mathcal{S}_{p}(\clh)$.
Two important subclasses of Schatten operators are the trace class operators ($p=1$) and the Hilbert–Schmidt operators ($p=2$). We refer to Chapter 3 in \cite{conway2000course} and also \cite{schatten1960norm} for more details.

If $T \ge 0$ and self-adjoint, then $\|T\|_{\mathcal{S}_1(\clh)} = \tr_{\clh}(T) = \sum_{i=1}^{\infty} \langle T e_i, e_i \rangle_{\clh}$. Moreover, the following holds for the three norms defined on $\cll(\clh)$
\begin{equation} \label{eq:bounde_HS_trace_norm}
\|T\|_{\op} \le \|T\|_{\mathcal{S}_2(\clh)} \le \|T\|_{\mathcal{S}_1(\clh)} .
\end{equation}

We denote by $\Lip_b^1(V)$ the space of uniformly Lipschitz functions $h: V \to \RR$, endowed with the norm
$\|h\|_{\Lip_b^1} =
\| h \|_{\Lip} + |h(0)| < \infty$, where 
\begin{equation*}
\|h\|_{\Lip} = \sup_{x \neq y \in V} \frac{|h(x)-h(y)|}{\|x-y\|_{V}} \le 1.
\end{equation*}

We use $x^*$ to denote an element of the dual space $V^*$. When $V$ is a Hilbert space, we use $\langle x, x^* \rangle_{V, V^*} \doteq x^*(x)$ for any $x \in V$ and $x^* \in V^*$ to denote the $V$-$V^*$ dual pairing.

We deal with direct sums (or Cartesian products) of $\clh$, denoted by $\clh^K, \; K \in \NN$. For a vector $x \in \clh^K$, we write $x^{(k)}$ to denote the $k-$th coordinate of $x$. The space $\clh^K$ is equipped with the induced inner product and its associated norm, defined as, for $x, y \in \clh^K$, 
\[
\langle x, y \rangle_{\clh^K} = \sum_{k=1}^K \langle x^{(k)}, y^{(k)} \rangle_\clh, \quad \| x \|_{\clh^K} = \sqrt{\sum_{k=1}^K \|x^{(k)} \|^2_{\clh}}.
\]

\subsection{Fréchet and Gross derivatives}
\label{se:Fre-Gross}

In this section, we define two notions of differentiation in infinite-dimensional spaces, the Fréchet and Gross derivatives. For the presentation we follow the excellent exposition of \cite{BigFerForZan24}.  Here, $H_0$ denotes a Hilbert space that is continuously embedded in $V$, with associated norm $\| \cdot \|_{H_0}$ and inner product $\langle \cdot, \cdot \rangle_{H_0}$. Later in the subsection, we provide concrete examples for $H_0$.

We say that a function $f: V\to W$ is \text{$H_0$-Fréchet differentiable} at $x\in V$ if there exists a bounded linear operator $L_x \in \mathcal{L}(H_0: W)$
such that
\begin{equation}
\lim_{\|h\|_{H_0}\to 0} \frac{\|f(x+h)-f(x)-L_x h\|_{W}}{\|h\|_{H_0}} = 0.
\end{equation}
If it exists, the operator $L_x$ is unique, called the \emph{$H_0$-Fréchet derivative} of $f$ at $x$, and is denoted by $D_{H_0}f(x) \doteq L_x$. We say that $f$ is $H_0$-Fréchet differentiable if it is $H_0$-Fréchet differentiable at every point $x\in V$.

Furthermore, we say that $f$ is twice $H_0$-Fréchet differentiable at $x\in {V}$ if (i) $f$ is $H_0$-Fréchet differentiable, and (ii) the map $D_{H_0}f : V \to \mathcal{L}(H_0: W)$ is also $H_0$-Fréchet differentiable.
In that case, the unique second-order $H_0$-Fréchet derivative of $f$ at $x$ is identified with the bilinear form
\begin{equation}
D^{2}_{H_0}f(x) \in \mathcal{L}(H_0^{2}: W).
\end{equation}

Higher-order differentiation of $f$ at some $x \in V$ along the subspace $H_0$ is then defined analogously by induction.

In the sequel, $V$ is often a Hilbert space and $W = \RR$. In this case, the Riesz representation theorem says that $D^k_{H_0} f(x)$, as an element of $\cll(H_0^{k}:\RR)$, can be identified with a unique element in $\cll(H_0^{(k-1)}:H_0)$, which we denote by $\nabla^k_{H_0} f(x)$, i.e., 
\begin{equation} \label{eq:se:Fre-Gross-R-rep}
    D^k_{H_0} f(x)(h_1,\dots,h_k) = \langle \nabla^k_{H_0} f(x)(h_1,\dots,h_{k-1}),h_k  \rangle_{H_0}, \quad h_1,\dots,h_k \in {H_0}.
\end{equation}
These definitions are clarified in the recent survey \cite{BigFerForZan24}; see Definition 2.10.

We can now define both Fréchet and Gross derivatives under the unified framework above.

\textit{Fréchet differentiability:} In the above framework, the usual notion of Fréchet differentiability corresponds to $H_0 = V$. In this case, we write $D^k_F f(x)$ instead of $D^k_{V} f(x)$ (for $k=1,2$) to denote the Fréchet derivative.

\textit{Gross differentiability:} Let $(i,\HH,V)$ be an abstract Wiener space (see Section 3.9 in \cite{Bogachev1998}), where $\HH$ is the Cameron Martin space, and let $\gamma$ be the associated centered, and nondegenerate Gaussian measure on $V$. Then, the Gross derivative corresponds to $H_0 = \HH$. Whenever the abstract Wiener space $(i,\HH,V)$ is easily inferred from context, we write as expected $D_\HH f(x)$ to denote the Gross derivative. 

If, for $H_0 = \HH $ or $H_0 = V = \clh$, \( \nabla^2_{H_0} f(x) \) is a trace class operator in $\cll(H_0:H_0)$, we define the Laplacian and Gross Laplacian respectively by,
\begin{equation} \label{eq:laplace-def}
   \Delta_\clh f(x) \doteq \tr_\clh (\nabla^2_\clh f(x)), \quad \Delta_\HH f(x) \doteq \tr_\HH (\nabla^2_\HH f(x)). 
\end{equation}

\textit{Relation Between Fréchet, Gross Differentiability:}
If \( f \) is Fréchet differentiable at \( x \in V \), then it is automatically \( \HH \)-differentiable at \( x \), and the \( \HH \)-derivative is the restriction of the Fréchet derivative to \( \HH \), see Proposition 2.8 in \cite{BigFerForZan24}.
\( \HH \)-differentiability does not imply Fréchet differentiability. Indeed, many functionals on Gaussian spaces are not Fréchet differentiable due to irregular behavior in directions orthogonal to \( \HH \), but they are \( \HH \)-differentiable, see Remark \ref{rmk:frechet-vs-gross-dif}\eqref{rmk:frechet-vs-gross-dif-item1} below. If $f$ is twice Fréchet differentiable, then $D_\HH f(x), x \in \clh,$ can be viewed as an element in $\clh^*$ and $D_\HH f(x) = D_F f(x)$; see p.\ 1241 in \cite{shih2011stein}. Many useful relations are derived from first principles in the paper \cite{BigFerForZan24}, see, e.g., Theorem 2.21 therein.

\begin{table}[h!] 
\renewcommand{\arraystretch}{1.2}
\small
\centering
\begin{tabularx}{\textwidth}{|l|X|X|X|X|}
\hline
\textbf{Concept} & \textbf{Our Notation} & \textbf{Section 3.4 in \cite{da2002second}}  & \textbf{\cite{shih2011stein}} & \textbf{Section 2.1 in \cite{BigFerForZan24}} \\
\hline
Fr\'echet Derivative ($H_0 = \clh$) & $D_F$ & $D$ & $'$ & $D$ \\
\hline
Gross Derivative ($H_0 = \HH$) & $D_\HH$ & $D_G$ & $D$ (context-dependent) & $D_{\HH}$ \\
\hline
Fr\'echet Gradient ($H_0 = \clh$) & $\nabla_F$ & $D$ & $'$ & $\nabla$ \\
\hline
Gross Gradient ($H_0 = \HH$) & $\nabla_\HH$ & $D_G$ & $D$ (context-dependent) & $\nabla_{\HH}$ \\
\hline
Laplacian ($H_0 = \clh$) & $\tr_\clh, \Delta_\clh$ & $\tr$ & Not available & Not available \\
\hline
Gross Laplacian ($H_0 = \HH$) & $\tr_\HH, \Delta_\HH$ & $\tr_G, \Delta_G$& $\tr_G, \Delta_H$ & Not available \\
\hline
\end{tabularx}
\caption{Notation comparison for Fr\'echet and Gross derivatives and gradients in commonly referenced works.}
\label{table1}
\end{table}

\subsection{The \texorpdfstring{$d_2$}{d_2}-distance} \label{se:d2-distance}

In this section, we present a metric on $\clp(\clh)$, the space of probability measures on $\clh$, that metrizes weak convergence. This metric was introduced by Gin\'{e} and Le\'{o}n in \cite{GinLeo80}. The recent write-up \cite{BasBurCamPec25} illuminates the differences between this distance and the metric $\rho_2$ that was employed in \cite{Bou20}.

Recall that, for an (everywhere) Fréchet differentiable function $h:\clh \to \RR$, the $k$-th (Fréchet) derivative $\Deriv^k_F h$ can be identified with a map from $\clh$ to $\cll(\clh^{k}:\RR)$. We denote by $\clc_b^k(\clh)$ the space of bounded, $\RR$-valued functions on $\clh$, admitting $k$ Fréchet derivatives, i.e., $h \in \clc_b^k(\clh)$ if 
\begin{equation} \label{eq:norm-def-cb2}
\| h \|_{\clc_b^k(\clh)} =  \sup_{x \in \clh} \left(  \sum_{j=0}^k \| \Deriv_F^j h(x) \|_{\op(\cll(\clh^{\otimes j}:\RR))} \right) < \infty,
\end{equation}
under the convention that $\| D^0_F h(x) \|_{\cll(\clh^0: \RR)} = \sup_{x \in \clh} |h(x)|$.  
Then, the $d_j$ metric on $\clp(\clh)$ is defined as, for $\mu, \nu \in \clp(\clh)$,
\begin{equation} \label{eq:d2metric}
d_j(\mu,\nu) \doteq \sup_{h \in \clc^j_b(\clh), \|h\|_{\clc_b^j(\clh)} \le 1} \left|\int_{\clh} h(x) (\mu(dx) - \nu(dx)) \right|,
\quad
\text{ for }
\quad
j \geq 1.
\end{equation}
By Theorem 2.4 in \cite{GinLeo80}, for $j \ge 1$, $d_j$ (and in particular, $d_2$) metrizes weak convergence in $\clp(\clh)$. For two random variables $X, Z$ in $\clh$, we write $d_2(X,Z)$, meaning $d_2(\cll(X),\cll(Z))$, where $\cll(\cdot)$ denotes the law of a random variable. In terms of the random variables $X,Z$, the $d_2$ distance is recast as
\[
d_2(X,Z) \doteq \sup_{h \in \clc^2_b(\clh), \|h\|_{\clc_b^2(\clh)} \le 1} \left|\EE [h(X)] - \EE [h(Z)] \right|.
\]

\subsection{Stein's method in abstract Wiener spaces} \label{se:Stein}

Let $(i, \HH, V)$ be an abstract Wiener space as in Section \ref{se:Fre-Gross}, with an associated Gaussian measure $\gamma_Z$ on $V$. We write $\gamma_Z$ to emphasize that $Z$ here denotes a $V$-valued random variable with law $\gamma_Z$. Let $h: V \to \RR$ be a measurable function in the test class $\Lip_b^1(V)$.

The \emph{Stein operator} associated with $\gamma_Z$ acts on sufficiently smooth functions $f: V \to \RR$ as
\begin{equation} \label{eq:Stein-operator}
\mathcal{A} f(x) \doteq \langle x, D_\HH f(x) \rangle_{\HH} - \Delta_\HH f(x),
\end{equation}
where $D_\HH f(x)$ and $\Delta_\HH f(x) = \tr_\HH(D_\HH^2 f(x))$ were introduced in Section \ref{se:Fre-Gross}. For a fixed function $h$, the Stein operator gives rise to the so-called \textit{Stein equation} corresponding to $h$, namely the function $f_h: V \to \RR$ that solves the following equation
\begin{equation} \label{eq:Stein-equation}
\mathcal{A} f_h(x) = h(x) - \int_V h   d\gamma_Z, \qquad x \in V.
\end{equation}
When $h  \in \Lip_b^1(V)$, existence and uniqueness of the solution $f_h$ to the Stein equation was proved by Shih \cite{shih2011stein}.

In particular, \cite{shih2011stein} proved that the Stein solution admits the representation
\begin{equation} \label{eq:Stein-solution}
    f_h(x) = - \int_0^\infty \big( P_t h(x) - \EE[h(Z)] \big)   dt,
\end{equation}
where $\{P_t\}_{t \ge 0}$ is the semigroup associated to the Ornstein-Uhlenbeck process and is defined by Mehler's formula as follows.
\begin{equation} \label{eq:Pt-shih}
    P_t h(x) \doteq \int_V h \left(e^{-t} x + \sqrt{1-e^{-2t}}  y \right) \gamma_Z(dy), 
    \qquad t \ge 0.
\end{equation} 

\subsection{Malliavin derivative for Hilbert space-valued random variables} \label{subsec:mall,ou,pseudo}

We collect in this section some results on the Malliavin derivative, the OU semigroup, its generator, and the induced pseudo-inverse of Hilbert space-valued random variables. We follow here the careful treatment of \cite{vidotto2025functional}, and we refer to this article for more details. For $\RR$-valued random variables, these results are classical and can be found in, e.g., Chapter 2 in \cite{nourdin2012normal}.

Let $(\Omega, \mathcal{F}, \mathbb{P})$ be a probability space, and let $
W = \{ W(h) : h \in \mfh\}$ be an isonormal Gaussian process defined on this probability space. Here, $\mfh$ is a real separable Hilbert space, equipped with the norm $\| \cdot \|_{\mfh}$ and an inner product $\langle \cdot, \cdot \rangle_{\mfh}$, and denote by $\{h_i\}_{i \in \NN}$ an orthonormal basis of $\mfh$. By definition, $W$ is a centered Gaussian family satisfying  $\EE[W(h) W(g)] = \langle h, g \rangle_{\mfh}$, for $h,g \in \mfh$.

When $\clh = \RR$, classical Wiener-Itô chaos decomposition (e.g., Corollary 2.7.8 in \cite{nourdin2012normal}) asserts that every square integrable random variable $G \in L^2((\Omega, \sigma \{W\}, \PP) : \RR)$ that is measurable with respect to $\sigma \{W\}$ admits the decomposition
\begin{equation} \label{eq:eq:chaos-decomp-real}
    G = \EE [G] + \sum_{n =1}^\infty I_n(g_n),
\end{equation}
with $g_n \in \mfh^{\odot n}$ suitable symmetric kernels and $I_n$ the usual multiple Wiener integral of order $n$ with regard to $W$.

In analogy, any $F \in L^2(\Omega: \mathcal{H})$ that is measurable with respect to $W$ admits a Wiener-Itô chaos expansion
\begin{equation} \label{eq:chaos-decomp-F}
    F = \EE[F] + \sum_{n =1}^\infty \cli_n(f_n),
\end{equation}
where $f_n \in \mfh^{\odot n} \otimes \mathcal{H}$ are symmetric kernels, and $\cli_n(f_n) \in L^2(\Om:\clh)$ denotes the $\clh$-valued, $n$-th multiple Wiener-Itô integral satisfying $\EE[\cli_n(f_n)] = 0$. Here $\mfh^{\odot n} \subset \mfh^{\otimes n}$ denotes the $n$-th symmetric tensor product of $\mfh$; see Chapter 1 in \cite{Nua06book} and Chapter 2 in \cite{nourdin2012normal}. The multiple Wiener integrals $I_n$ and $\cli_n$ (as well as their respective kernels) are related through the relationship
\begin{equation}
    \cli_n = I_n \otimes \Id_\clh, \quad  \mfh^{\odot n} \otimes \mathcal{H} \ni f_n = \sum_{i=1}^\infty \langle f_{n}, e_i \rangle_{\clh} \otimes e_i \mapsto \cli_n (f_n) = \sum_{i=1}^\infty I_n(\langle f_{n}, e_i \rangle_{\clh}) e_i.
\end{equation}
For careful definitions of these objects in general Hilbert spaces and proofs of the chaos decomposition claim, see Theorem 4.2 in \cite{dukZou24} or Lemma 2.2 in \cite{vidotto2025functional} (and note that the definitions therein coincide). Henceforth, we assume that the random variables are centered.

The Malliavin derivative $D_M$ can then be defined in terms of the chaos expansion \eqref{eq:chaos-decomp-F} as follows
\begin{equation} 
    D_M F \doteq \sum_{n \ge 1} n   \cli_{n-1}(f_n) \in L^2(\Omega: \mfh \otimes \mathcal{H}),
\end{equation}
where $F$ belongs to the domain of $D_M$ defined as
\begin{equation}
\mathbb{D}^{1,2}(\mathcal{H}) \doteq \Big\{ F \in L^2(\Omega: \mathcal{H})  \text{ such that }  F \text{ is as in }   \eqref{eq:chaos-decomp-F}    \text{ and }   \sum_{n \ge 1} n \ n! \| f_n\|^2_{\mfh^{\otimes n} \otimes \clh } < \infty \Big\}.
\end{equation}
Analogous definitions can be given for the higher-order Malliavin derivatives $D_M^k$, as well as their respective domains $\mathbb{D}^{k,2}(\mathcal{H})$. However, we only use $k = 1$ in the present article.

Cylindrical $\mathcal{H}$-valued random variables are dense in $\mathbb{D}^{1,2}(\mathcal{H})$ (see, e.g., Section 2 in \cite{leon1998stochastic}), ensuring that this definition agrees with the classical derivative on smooth cylindrical functionals.

The divergence operator $\delta$ is defined as the adjoint of the Malliavin derivative $D_M$. More precisely, let $u \in L^2(\Omega :  \mfh \otimes \clh)$ be in the domain of $\delta$ (denoted $\operatorname{dom}(\delta)$) if there exists a constant $C$ depending on $u$ such that, 
\[
| \EE \langle D_M F, u \rangle_{\mfh \otimes \clh} | \le C \| F \|_{L^2(\Om:\clh)}
\]
for all $F \in \mathbb{D}^{1,2}(\clh)$. Then, for all $(F,u) \in \mathbb{D}^{1,2}(\mathcal{H}) \times \operatorname{dom}(\delta)$, the random variable $\delta(u) \in L^2(\Omega: \clh)$ is defined by the duality relation
\begin{equation} \label{eq:2.24inPoincare}
\EE[\langle F, \delta(u)\rangle_\clh] = \EE[\langle D_M F, u\rangle_{\mfh \otimes \clh}]
\quad \text{for all } F \in \mathbb{D}^{1,2}(\mathcal{H});
\end{equation}
see Sections 2.4 and 2.6 in \cite{nourdin2012normal} or Section 2.2.3 in \cite{vidotto2025functional} for details. 

The Ornstein-Uhlenbeck semigroup $\{ P_t : t \ge 0\}$ can also be defined to act on Wiener chaos as
\begin{equation} \label{eq:Pt-def-random-variable}
    P_t F \doteq \EE[F] + \sum_{n\ge1} e^{-n t} \cli_n(f_n),
\end{equation}
whenever $F$ admits the representation in \eqref{eq:chaos-decomp-F}. It will be clear from context whether we use the definition of the semigroup given in \eqref{eq:Pt-def-random-variable} (suitable for random variables), or the one in \eqref{eq:Pt-shih} (suitable for deterministic functions). 

Associated to the semigroup $\{P_t\}$ in \eqref{eq:Pt-def-random-variable} is its infinitesimal generator $L$, which acts on random variables $F$ as in \eqref{eq:chaos-decomp-F} by
\begin{equation}
L F \doteq \sum_{n\ge1} (-n) \cli_n(f_n),
\end{equation}
with domain 
\begin{equation} 
\mathrm{dom}(L) \doteq \Big\{F \in L^2(\Omega: \mathcal{H})  \text{ such that } F \text{ is as in } \eqref{eq:chaos-decomp-F}    \text{ and }   \sum_{n \ge 1} n^2 n! \| f_n\|^2_{\mfh^{\otimes n} \otimes \clh } < \infty \Big\}.
\end{equation}
The following identity connects the generator, the adjoint operator, and the Malliavin derivative
\begin{equation} \label{eq:L-def}
    L = - \delta D_M.
\end{equation}
Finally, for random variables $F$ as in \eqref{eq:chaos-decomp-F}, the pseudo-inverse $L^{-1}$ of the generator is defined by
\begin{equation} \label{eq:L-inv-def}
L^{-1} F \doteq - \sum_{n\ge1} \frac{1}{n} \cli_n(f_n),
\end{equation}
so that $L L^{-1} F = F - \EE F$.

\section{Abstract bound and Fourth-Moment Theorems}

\begin{theorem}
  \label{thm:1}
  Let $Z$ be a centered, nondegenerate Gaussian random variable on $\clh$ with covariance operator $\clt_Z$. Then, for all centered $F \in \mathbb{D}^{1,2}(\clh)$, it holds that
  \begin{align} 
    d_2 (F,Z)
    &\leq   
    \frac{1}{2}
    \left\lVert \langle D_M F, - D_M L^{-1}F \rangle_{\mfh}  - \clt_Z \right\rVert_{L^1(\Omega: \mathcal{S}_1(\clh))},
\end{align}
where the operator $L^{-1}$ was defined in \eqref{eq:L-inv-def}.
\end{theorem}

\begin{proof} Denote by $\gamma_Z$ the Gaussian measure on $\clh$ associated with the random variable $Z$, and recall that $\mathbb{H}$ is the Hilbert space associated to the abstract Wiener space formulation of $\gamma_Z$ on $\clh$. Then, with explanations given below, 
\begin{align} 
    d_{2}(F,Z)
    &=
    \sup_{h \in \clc^2_b(\clh), \|h\|_{\clc_b^2(\clh)} \leq 1} | \EE [ h(F) ] - \EE [h(Z)] |
    \nonumber
    \\ &\le
    \sup_{h \in \clc^2_b(\clh), \|h\|_{\clc_b^2(\clh)} \leq 1} \left|
    \EE \left( \left\langle F,\Deriv_\HH f_h(F) \right\rangle_{\clh,\clh^{\ast}} - \Delta_{\HH} f_h(F) \right) \right|
    \label{eq:le1-1}
    \\ &=
     \sup_{h \in \clc^2_b(\clh), \|h\|_{\clc_b^2(\clh)} \leq 1} \left|
    \EE \left( \left\langle F,\Deriv_\HH f_h(F) \right\rangle_{\clh,\clh^{\ast}} - \tr_\HH \nabla^2_\HH f_h(F) \right) \right|,
    \label{eq:le1-2}
\end{align}
where $f_h$ was defined in \eqref{eq:Stein-solution}, and is twice Fréchet differentiable by Lemma \ref{lem:4}. This also implies that it is twice Gross differentiable; see Proposition 2.8 in \cite{BigFerForZan24}.
In the calculations above, \eqref{eq:le1-1} follows by the inclusion $\clc_b^2(\clh) \subset  \Lip_b^1(\clh)$ and Theorem 4.10 in \cite{shih2011stein} (but note the adjusted notation here), and recall the notation of the $\clh$-$\clh^*$ dual pairing.
Furthermore, \eqref{eq:le1-2} is due to the definition of the Gross Laplacian; see \eqref{eq:laplace-def}.
For the first term in \eqref{eq:le1-2}, $D_\HH f_h(F)$ is an element in $\clh^*$ and, again by  Proposition 2.8 in \cite{BigFerForZan24},
\begin{align} 
    \EE \left( \left\langle F,\Deriv_\HH f_h(F) \right\rangle_{\clh,\clh^{\ast}} \right)  
    = \EE \left( \left\langle F,  D_F f_h(F) \right\rangle_{\clh,\clh^{\ast}}  \right);
    \label{eq:910-2}
\end{align}
see also p.\ 1241 in \cite{shih2011stein}, but note again the differences in notation that were highlighted in Table \ref{table1}. Moreover,
\begin{align}
    \EE \left(
    \left\langle F,\Deriv_F f_h(F) \right\rangle_{\clh,\clh^*}
    \right)
    &= \EE \left( \left\langle F,\nabla_F f_h(F) \right\rangle_{\clh}  \right)
    \label{eq:910-4-1} \\
    &=
    \EE \left(
    \left\langle L L^{-1}F, \nabla_F f_h(F) \right\rangle_{\clh}
    \right) \nonumber
    \\ &=
    \EE \left( \tr_\clh \left(
    \langle  -D_M L^{-1} F, D_M \nabla_F f_h(F) \rangle_{\mfh}
    \right) \right) \label{eq:910-4}
    \\ &=
    \EE \left( \tr_\clh \left( \nabla_F^2 f_h(F)
    \langle D_M  F, - D_M L^{-1}F \rangle_{\mfh} \right) \right), \label{eq:910-5} 
\end{align}
where \eqref{eq:910-4-1} follows by the Riesz representation theorem; see \eqref{eq:se:Fre-Gross-R-rep}.
Moreover, \eqref{eq:910-4} and \eqref{eq:910-5} are respectively due to Lemma \ref{le:analogue-Th-2.6} \ref{item3:Dirichlet-Th-2.6} and \ref{item2:Dirichlet-Th-2.6} and since $\nabla_F f_h(F)$ is Fréchet differentiable with bounded second derivative; see Lemma \ref{lem:4}. The boundedness of $\nabla_F f_h$ and $\nabla^2_F f_h$ in their respective operator norms also yields the Malliavin differentiability $\nabla_F f_h(F) \in \mathbb{D}^{1,2}(\clh)$.

Note further that
\begin{equation}
    \tr_\HH \left( \nabla^2_\HH f_h(F) \right)
    =
    \tr_\clh \left( \nabla_F^2 f_h(F) \clt_Z \right)  \label{eq:910-3}
\end{equation}
from (3.4.4) in \cite{da2002second}.

Combining \eqref{eq:910-2}, \eqref{eq:910-5}, and \eqref{eq:910-3} yields, for any $h \in \clc_b^2(\clh)$,
\begin{align}
    \EE \left[
    \left\langle F,\nabla_F f_h(F) \right\rangle_{\clh} - \tr_\HH \nabla^2_\HH f_h(F)
    \right]
    =
    \EE \left[
    \tr_\clh \left( \nabla_F^2 f_h(F)
    \left( \langle D_M  F, - D_M L^{-1}F \rangle_{\mfh} - \clt_Z \right)
    \right)
    \right]. \label{eq:188}
\end{align}
Since, for $h_1,h_2 \in \clh$,
\begin{equation} \label{eq:910-6} 
\langle \nabla^2_F f_h(F) [h_1],h_2 \rangle_{\clh} = D^2_F f_h(F) [h_1,h_2],
\end{equation}
we have $\| \nabla^2_F f_h(F) \|_{\op(\cll(\clh:\clh))} = \| D^2_F f_h(F) \|_{\op(\cll(\clh^{\otimes 2}: \RR))}$.

Finally, since $\nabla^2_F f_h(F)$ is a bounded linear operator and $\langle D_M  F, - D_M L^{-1}F \rangle_{\mfh} - \clt_Z$ is of trace class, 
\begin{align}
    &\EE\left[ \tr_\clh
         \nabla_F^2 f_h(F) \left( \langle D_M  F, - D_M L^{-1}F \rangle_{\mfh} - \clt_Z \right)   \right]  \nonumber
    \\&\leq
    \EE \left(\left\lVert \nabla_F^2 f_h(F) \right\rVert_{\op(\cll(\clh:\clh))}
    \| \langle D_M  F, - D_M L^{-1}F \rangle_{\mfh} - \clt_Z \|_{\cls_1(\clh)} \right) \label{eq:189}
    \\&\leq
    \sup_{u \in \clh}
       \left\lVert \Deriv_F^2 f_h(u) \right\rVert_{\op(\cll(\clh^{\otimes 2}:\RR))}
    \EE \| \langle D_M  F, - D_M L^{-1}F \rangle_{\mfh} - \clt_Z \|_{\cls_1(\clh)} \nonumber
    \\&\leq
    \frac{1}{2}
    \left\lVert
    \langle D_M  F, - D_M L^{-1}F \rangle_{\mfh} - \clt_Z
    \right\rVert_{L^1(\Omega:\mathcal{S}_1(\clh))}, \label{eq:191}
\end{align}
where \eqref{eq:189} follows by Theorem 18.11 (e) in \cite{conway2000course} and \eqref{eq:191} is a consequence of \eqref{eq:8} in Lemma \ref{lem:4}. The conclusion follows upon combining \eqref{eq:le1-2}, \eqref{eq:910-2}, \eqref{eq:188}, and \eqref{eq:191}.
\end{proof}

The following corollary reproves Proposition 2 of \cite{BasBurCamPec25} for the case $p = \infty, q = 1$, with an additional non-denegeracy assumption.

\begin{corollary}
\label{cor1}
  Let $Z_1,Z_2$ be centered, Gaussian random variables on $\clh$ with covariance operators $\clt_{Z_1},\clt_{Z_2}$, such that either $Z_1$ or $Z_2$ is nondegenerate. Then, 
  \begin{align} 
    d_2 (Z_1,Z_2)
    &\leq   
    \frac{1}{2}
    \left\lVert \clt_{Z_1}  - \clt_{Z_2} \right\rVert_{\mathcal{S}_1(\clh)}.
\end{align}   
\end{corollary}

\begin{proof}
Without loss of generality, assume that $Z_2$ is nondegenerate.
    By Theorem \ref{thm:1}, we have
    \begin{align} 
    d_2 (Z_1,Z_2)
    &\leq   
    \frac{1}{2}
    \left\lVert \langle D_M Z_1, - D_M L^{-1}Z_1 \rangle_{\mfh}  - \clt_{Z_2} \right\rVert_{L^1(\Omega:\mathcal{S}_1(\clh))}.
\end{align}
Then the statement of the corollary can be inferred by showing that for all $u,v \in \clh$, 
\begin{equation}
    \langle \langle D_M Z_1, - D_M L^{-1}Z_1 \rangle_{\mfh}
    u, v \rangle_{\clh} = \langle \clt_{Z_1} u, v \rangle_{\clh}.
\end{equation}
Since $Z_1$ is Gaussian, it admits a Wiener-It\^o chaos representation $Z_1 = \mathcal{I}_{1}(f)$ where $f \in \mfh \otimes \clh$ so that $D_M Z_1 = D_M \mathcal{I}_{1}(f)= f$. 
Then, recalling \eqref{eq:L-inv-def},
\begin{equation} \label{eq1-cor1}
\begin{aligned}
    \langle \langle D_M Z_1, - D_M L^{-1}Z_1 \rangle_{\mfh}
    u, v \rangle_{\clh}
    &=
    \langle 
    \langle D_M \mathcal{I}_{1}(f), u \rangle_{\clh}, 
    \langle - D_M L^{-1}\mathcal{I}_{1}(f), v \rangle_{\clh} \rangle_{\mfh} 
    \\&=
    \langle 
    \langle f, u \rangle_{\clh}, 
    \langle f , v \rangle_{\clh}
    \rangle_{\mfh}.
\end{aligned}
\end{equation}
On the other hand, by the isometry property, we have
\begin{equation} \label{eq2-cor1}
\begin{aligned}
    \langle \clt_{Z_1} u, v \rangle_{\clh} 
    &= 
    \EE [ \langle Z_1 , u \rangle_{\clh} 
    \langle Z_1, v \rangle_{\clh} ]
    =
    \EE [ \langle \mathcal{I}_{1}(f) , u \rangle_{\clh} 
    \langle \mathcal{I}_{1}(f), v \rangle_{\clh} ]
    \\&=
    \EE [ I_{1}(\langle f , u \rangle_{\clh} )
    I_{1}(\langle f, v \rangle_{\clh} ) ]
    =
    \langle 
    \langle f, u \rangle_{\clh}, 
    \langle f , v \rangle_{\clh}
    \rangle_{\mfh}.
\end{aligned}
\end{equation}
The conclusion follows by combining \eqref{eq1-cor1}, \eqref{eq2-cor1}, and noticing that both $\clt_{Z_1}$ and $\clt_{Z_2}$ are deterministic.
\end{proof}

\subsection{The case of a fixed chaos expansion}

For the following theorem, we use the notion of an $\clh$-valued multiple Wiener integral. For a rigorous construction, we refer to Chapter 2 in \cite{nourdin2012normal}, and \cite{dukZou24}.

We say that $\{F_n\}_{n\ge 1}$ is a sequence of $\clh$-valued multiple integrals living in a fixed chaos, if there is a $p \ge 1$ such that $\{F_n\}$ admit the expansion
\begin{equation} \label{eq:seq-multiple-fixed-chaos}
    F_n = \cli_{p}(f_n), \quad  f_n \in \mfh^{\odot p} \otimes \clh, n\ge 1.
\end{equation}

\begin{theorem}[Infinite-dimensional Fourth-Moment Theorem -- Fixed Chaos]
\label{thm:abstract-fourth-moment-theorem}
Let $Z$ be a centered, nondegenerate Gaussian random variable on $\clh$ with covariance operator $\clt_Z$. Let $\{ F_n \}_{n\ge 1}$ be a sequence of random variables living in a fixed chaos, with expansion given in \eqref{eq:seq-multiple-fixed-chaos} for some $p \geq 2$, and respective covariance operators $\clt_{F_n}$. Suppose $\| \clt_{F_n} - \clt_Z\|_{\mathcal{S}_1(\clh)} \to 0$. Then, as $n \to \infty$, the following statements are equivalent
    \begin{enumerate}[label=(\roman*)]
    \item $F_n \xrightarrow{d} Z$.
    \label{item1:thm:abstract-fourth-moment-theorem}
    \item $\EE \left[ \langle F_n, e_i \rangle^4_{\clh}\right] \to \EE \left[ \langle Z, e_i \rangle^4_{\clh}\right]$ for all orthonormal bases $\{e_i\}_{i \in \NN}$ of $\clh$.
    \label{item2-all:thm:abstract-fourth-moment-theorem}
    \item The $4$-th weak moments of $\{F_n\}$ converge to the $4$-th weak moments of $Z$, i.e., 
    \begin{equation}
        \EE ( x^*_1(F_n)\dots x^*_4(F_n)) \to \EE ( x^*_1(Z)\dots x^*_4(Z)), 
    \end{equation}
    for all $x_1^*,\dots,x_4^* \in \clh^*$.
    \label{item3:thm:abstract-fourth-moment-theorem}
    \item $\EE \left[ \langle F_n, e_i \rangle^4_{\clh}\right] \to \EE \left[ \langle Z, e_i \rangle^4_{\clh}\right]$ for some orthonormal basis $\{e_i\}_{i \in \NN}$ of $\clh$.
    \label{item2:thm:abstract-fourth-moment-theorem}
    \item For some orthonormal basis $\{e_i\}_{i \in \NN}$, and with $f_{n,i} = \langle f_n, e_i \rangle_{\clh}$, it holds that for all $i\in \NN$, and $r=1,\dots,p-1$, as $n \to \infty$, \label{item5:thm:abstract-fourth-moment-theorem-contr}
    \[
    \|  f_{n,i}  \otimes_r  f_{n,i}  \|_{\mfh^{\otimes (2p - 2r)}} \to 0.
    \]
    \end{enumerate}
\end{theorem}

\begin{proof}
The equivalence of \ref{item2-all:thm:abstract-fourth-moment-theorem} and \ref{item3:thm:abstract-fourth-moment-theorem} is proved in Lemma \ref{lemma:equiv-weak-moment-all-bases} below. That \ref{item2-all:thm:abstract-fourth-moment-theorem} implies \ref{item2:thm:abstract-fourth-moment-theorem} is immediate. So it remains to show that \ref{item1:thm:abstract-fourth-moment-theorem} implies \ref{item2-all:thm:abstract-fourth-moment-theorem}, that \ref{item2:thm:abstract-fourth-moment-theorem} implies \ref{item5:thm:abstract-fourth-moment-theorem-contr}, and that \ref{item5:thm:abstract-fourth-moment-theorem-contr} implies \ref{item1:thm:abstract-fourth-moment-theorem}.

\textit{Proof of \ref{item1:thm:abstract-fourth-moment-theorem} implies \ref{item2-all:thm:abstract-fourth-moment-theorem}:}
Fix an orthonormal basis $\{e_i\}_{i \in \NN}$ and note that, since the map $x \mapsto \langle x, e_i \rangle_{\clh}, \; x \in \clh$, is continuous, \ref{item1:thm:abstract-fourth-moment-theorem} and the continuous mapping theorem imply that $\langle F_n, e_i \rangle_{\clh} \xrightarrow{d} \langle Z, e_i \rangle_{\clh}$ for all $i \geq 1$. 
Moreover, $\| \clt_{F_n} - \clt_{Z} \|_{\mathcal{S}_1(\clh)} \to 0$ implies that $\EE \|F_n\|^2_{\clh} \to \EE \|Z\|^2_{\clh}$, which in turn says that $\sup_{n\ge 1} \EE \|F_n\|^2_{\clh} < \infty$. 
By hypercontractivity of the Wiener chaos, for all $k \ge 2$, there is a constant $c(k)>0$ such that $\EE \left[ \langle F_n, e_i \rangle_{\clh}^k \right] \leq c(k) \left( \EE \left[ \langle F_n, e_i \rangle_{\clh}^2\right] \right)^{k/2}$; see Theorem 2.7.2 in \cite{nourdin2012normal}.
Then, with $F_{n,i} \doteq \langle F_n, e_i \rangle_{\clh}$, for arbitrary $\varepsilon > 0$, there is a constant $c>0$, so that, by hypercontractivity, 
\begin{equation*}
    \EE F_{n,i}^{4+\varepsilon} \le c (\EE F_{n,i}^2)^{\frac{4+\varepsilon}{2}} \le c \left(\sum_{i=1}^{\infty} \EE F_{n,i}^2\right)^{\frac{4+\varepsilon}{2}}  \le c \left(\sup_{n\ge 1} \EE \|F_n\|^2 _{\clh}\right)^{\frac{4+\varepsilon}{2}}  < \infty,
\end{equation*}
which implies that $\{F_{n,i}^4\}_{n\ge 1}$ is uniformly integrable. Then, Theorem 3.5 in \cite{billingsley1999convergence} gives convergence of the moments, namely that $\EE \left[ \langle F_n, e_i \rangle_{\clh}^4 \right] \to \EE \left[ \langle Z, e_i \rangle_{\clh}^4 \right]$, for all $i \ge 1$.

\textit{Proof of \ref{item2:thm:abstract-fourth-moment-theorem} implies \ref{item5:thm:abstract-fourth-moment-theorem-contr}:} Since $\langle F_n, e_i \rangle_{\clh}$ is an $\RR$-valued random variable with $F_{n,i} = I_p(f_{n,i})$ (cf. Lemma 2.2(i) in \cite{vidotto2025functional}), equation (5.2.6) in \cite{nourdin2012normal} implies that 
\begin{equation*}
    \EE F_{n,i}^4 - 3(\EE F_{n,i}^2)^2 =  \sum_{r=1}^{p-1}  (p!)^2 \binom{p}{r}^2 \left( \| f_{n,i}  \otimes_r f_{n,i} \|^2_{\mfh^{\otimes (2p-2r)}} + \binom{2p-2r}{p-r} \| f_{n,i}  \widetilde \otimes_r f_{n,i} \|^2_{\mfh^{\otimes (2p-2r)}} \right) \to 0,
\end{equation*}
for all $i \in \NN$, where the convergence holds by our assumption. The notation $\widetilde \otimes$ here stands for the symmetric tensorization; we refer to Appendix B of \cite{nourdin2012normal} for a careful treatment. The claim is now immediate.

\textit{Proof of \ref{item5:thm:abstract-fourth-moment-theorem-contr} implies \ref{item1:thm:abstract-fourth-moment-theorem}:} 
To prove that \ref{item5:thm:abstract-fourth-moment-theorem-contr} implies \ref{item1:thm:abstract-fourth-moment-theorem}, we employ Theorem \ref{thm:1} (since $F_n \in \mathbb{D}^{1,2}$ trivially) so that
\begin{equation} \label{eq:applyTH31}
d_2(F_n,Z)
 \le \tfrac12 \EE\|\Gamma(F_n, -L^{-1} F_n)-\clt_Z\|_{\mathcal{S}_1(\clh)},
\qquad
\Gamma(F,G) \doteq \langle D_M F, D_M G\rangle_{\mfh}.
\end{equation}

Note that, since $F_n$ is in the $p$-th Wiener chaos, by Lemma \ref{le:analogue-Th-2.6} \ref{item2:lem:Gamma-covariance} and \ref{item1:lem:Gamma-covariance} below, we have 
\begin{equation} \label{eq:positivity-exp}
 \Gamma(F_n, -L^{-1} F_n) \geq 0,
 \hspace{0.2cm}
 \mathbb{E}\big[ \Gamma(F_n, -L^{-1} F_n)\big] = \mathcal{T}_{F_n},
\end{equation}
that is, $\Gamma(F_n, -L^{-1} F_n)$ is a nonnegative (self-adjoint) trace-class operator on $\mathcal H$.

By triangle inequality, the trace class norm in \eqref{eq:applyTH31} can be further bounded by
\begin{equation} \label{eq:two-summands-trace-cov}
\EE\| \Gamma(F_n, -L^{-1} F_n)-\clt_Z\|_{\mathcal{S}_1(\clh)}
 \le \EE\| \Gamma(F_n, -L^{-1} F_n)-\clt_{F_n}\|_{\mathcal{S}_1(\clh)}  + \|\clt_{F_n}-\clt_Z\|_{\mathcal{S}_1(\clh)}.
\end{equation}
The second summand in \eqref{eq:two-summands-trace-cov} vanishes, as $n\to \infty$, by assumption. 

For the first summand of \eqref{eq:two-summands-trace-cov}, fix the orthonormal basis $\{e_i\}_{i \in \NN}$ of $\clh$ along which the convergences in \ref{item5:thm:abstract-fourth-moment-theorem-contr} hold, and let $P_m$ be the orthogonal projection operator onto $E_m \doteq \mathrm{span}\{e_1,\dots,e_m\}$. To be more precise, define the following quantities
\begin{equation} \label{def:pmqmgn}
  P_m : \clh \to \clh \; \text{with } P_m(x) \doteq \sum_{i=1}^m \langle x , e_i \rangle_{\clh} e_i, \;  \quad Q_m \doteq \Id_\clh-P_m, \quad  G_n \doteq  \Gamma(F_n, -L^{-1} F_n)-\clt_{F_n}.  
\end{equation}
Then, for fixed $m\in\NN$, 
\begin{equation} \label{eq:summands3-trace}
\EE \| G_n \|_{\mathcal{S}_1(\clh)} \leq \EE \| P_m G_n P_m \|_{\mathcal{S}_1(\clh)} +  2\EE \| Q_m G_n P_m \|_{\mathcal{S}_1(\clh)} +  \EE \| Q_m G_n Q_m \|_{\mathcal{S}_1(\clh)},
\end{equation}
where we used that the trace class norm is $^*$-invariant (i.e., $\| A \|_{\mathcal{S}_1(\clh)} = \| A^* \|_{\mathcal{S}_1(\clh)}$) by Theorem 18.11 (f) in \cite{conway2000course}. 
We consider the three summands on the right-hand side of \eqref{eq:summands3-trace} separately.

For the first summand in \eqref{eq:summands3-trace}, note that since $P_m$ is the orthogonal projection onto 
$E_m$, the range of the operator 
$A \doteq P_m G_n P_m$ is a subset of $E_m$ and $A|_{E_m^\perp} = 0$, hence 
$\mathrm{rank}(A) \le m$. Recall the definition of the Schatten $p$-norm in terms of singular values from \eqref{def:Schatten-p-norm-singular} with $p=1$. 
Let $s_1(A) \ge \dots \ge s_r(A) > 0$ denote the nonzero singular values of $A$ with $r = \mathrm{rank}(A) \le m$.
Then,
\begin{equation}
\|A\|_{\mathcal{S}_1(\clh)} = \sum_{k=1}^{r} s_k(A), 
\qquad
\|A\|_{\mathcal{S}_2(\clh)} = \Big( \sum_{k=1}^{r} s_k(A)^2 \Big)^{1/2}.
\end{equation}
By Cauchy-Schwarz in $\RR^r$,
\begin{equation} \label{eq:Trace-HS-ineq}
\|A\|_{\mathcal{S}_1(\clh)} 
\le \sqrt{r}   \|A\|_{\mathcal{S}_2(\clh)}.
\end{equation}
Then, relative to the orthonormal basis $\{e_1,\dots,e_m\}$ of $E_m$ and by \eqref{eq:Trace-HS-ineq},
\begin{equation} \label{eq:finite-term}
\EE \| P_m G_n P_m \|_{\mathcal{S}_1(\clh)} 
\leq
\sqrt{m}
\sqrt{\EE \| P_m G_n P_m \|^2_{\mathcal{S}_2(\clh)}}
=
\sqrt{m} \sqrt{
\sum_{i,j=1}^m \EE \langle G_n e_i, e_j \rangle^2_{\clh} }.
\end{equation}
Note that, for these estimates, $G_n$ needs not be a positive semidefinite operator.

Moreover, by denoting $F_{n,i} \doteq \langle F_n, e_i \rangle_{\clh}$ and $f_{n,i} \doteq \langle f_n, e_i \rangle_{\clh}$, the following bound holds
\begin{align}
\hspace{-0.3cm}
    \sum_{i,j=1}^m \EE \langle G_n e_i, e_j \rangle^2_{\clh} &=  \sum_{i,j=1}^m \EE \left( \langle  \Gamma(F_n, -L^{-1} F_n) e_i, e_j \rangle_{\clh} - \langle \clt_{F_n} e_i, e_j \rangle_{\clh}   \right)^2 \\
    &\le  \sum_{i,j=1}^m \left( \langle \clt_{F_n} e_i, e_j \rangle_{\clh} - \EE (F_{n,i} F_{n,j} )  \right)^2 \\
    &\qquad + \frac{1}{2} \sum_{i,j=1}^m \sum_{r =1}^{p-1} c_p^2(r) \left(  \| f_{n,i} \otimes_{p-r} f_{n,i} \|_{\mfh^{\otimes 2r}}^2 + \| f_{n,j} \otimes_{p-r} f_{n,j} \|_{\mfh^{\otimes 2r}}^2 \right)  \label{eq:summands4-trace-10}  \\
    &= \frac{1}{2} \sum_{i,j=1}^m \sum_{r =1}^{p-1} c_p^2(r) \left(  \| f_{n,i} \otimes_{p-r} f_{n,i} \|_{\mfh^{\otimes 2r}}^2 + \| f_{n,j} \otimes_{p-r} f_{n,j} \|_{\mfh^{\otimes 2r}}^2 \right),
    \label{eq:summands4-trace-1} 
\end{align}
where $c_p(r)$ are suitable constants given in \eqref{eq:remainder-terms-def-quant-3} below. Here \eqref{eq:summands4-trace-10} follows from (6.2.1) of Lemma 6.2.1 in \cite{nourdin2012normal}, with $\alpha =  \langle \clt_{F_n} e_i, e_j \rangle_{\clh}$, $p = q$, and $f = f_{n,i}, g = f_{n,j}$. 

By assumption, we have $\| f_{n,i} \otimes_{r} f_{n,i} \|_{\mfh^{\otimes (2p-2r)}} \to 0$, for all $i \in \NN$ and $r=1,\dots,p-1$, as $n \to \infty$.
Then, by combining \eqref{eq:finite-term} and \eqref{eq:summands4-trace-1}, we get
\begin{equation} \label{eq:conv-Pn-Pn}
    \EE \| P_m G_n P_m \|_{\mathcal{S}_1(\clh)} \to 0,
\end{equation}
as $n\to \infty$, for all fixed $m \in \NN$.

Now note that, since $F_n$ is in a fixed chaos, $\Gamma$ is a positive semidefinite operator by \eqref{eq:positivity-exp}. Therefore, for the third term of \eqref{eq:summands3-trace},
\begin{equation}
\begin{aligned}
    \EE \| Q_m G_n Q_m \|_{\mathcal{S}_1(\clh)}
    &\leq
    \EE \| Q_m  \Gamma(F_n, -L^{-1} F_n) Q_m \|_{\mathcal{S}_1(\clh)} +  \| Q_m \EE  \Gamma(F_n, -L^{-1} F_n) Q_m \|_{\mathcal{S}_1(\clh)}
    \\&=
    2 \| Q_m \EE  \Gamma(F_n, -L^{-1} F_n) Q_m \|_{\mathcal{S}_1(\clh)},
\end{aligned}
\end{equation}
where the equality follows, since by \eqref{eq:positivity-exp}, i.e., the positivity of $\Gamma(F_n, -L^{-1} F_n)$, and Tonelli's theorem
\begin{equation} \label{eq:first316-1}
\begin{split}
\EE \| Q_m  \Gamma(F_n, -L^{-1} F_n) Q_m \|_{\mathcal{S}_1(\clh)}
&=
\sum_{i=1}^{\infty} \langle Q_m \EE  \Gamma(F_n, -L^{-1} F_n) Q_m e_i, e_i \rangle_{\clh}\\
&=
\| Q_m \EE  \Gamma(F_n, -L^{-1} F_n) Q_m \|_{\mathcal{S}_1(\clh)}.
\end{split}
\end{equation}
Then, for fixed $m$ and as $n \to \infty$,
\begin{equation} \label{eq:first316-2}
\begin{split}
   \lim_{n \to \infty} \| Q_m \EE  \Gamma(F_n, -L^{-1} F_n) Q_m \|_{\mathcal{S}_1(\clh)}
&= \lim_{n \to \infty} 
\sum_{i=1}^{\infty} \langle Q_m \EE  \Gamma(F_n, -L^{-1} F_n) Q_m e_i, e_i \rangle_{\clh} \\
&= \lim_{n \to \infty} 
\sum_{i = m+1}^\infty \langle \clt_{F_n} e_i,e_i\rangle_{\clh}
=
\sum_{i = m+1}^\infty \langle \clt_{Z} e_i,e_i\rangle_{\clh}, 
\end{split}
\end{equation}
where the convergence is due to 
\begin{equation} \label{eq:tail-convergence-trace}
    \| Q_m \big( \clt_{F_n} - \clt_Z\big) Q_m \|_{\mathcal{S}_1(\clh)}
    \leq
    \| Q_m \|^2_{\op} \|\clt_{F_n}-\clt_Z\|_{\mathcal{S}_1(\clh)}
    =
    \|\clt_{F_n}-\clt_Z\|_{\mathcal{S}_1(\clh)} \to 0,
\end{equation}
as $n \to \infty$; the inequality follows by Theorem 18.11 (g) in \cite{conway2000course} and the second relation is due to $\| Q_m \|_{\op} = 1$.
Finally, the tail in \eqref{eq:first316-2} vanishes as $m\to\infty$ since $\sum_{i=1}^{\infty} \langle \clt_{Z} e_i,e_i\rangle_{\clh} = \| \clt_Z \|_{\mathcal{S}_1(\clh)} = \EE \| Z \|_{\clh}^2 < \infty$.

For the second summand in \eqref{eq:summands3-trace}, i.e., the cross terms, we get
\begin{equation} \label{eq:start-cross}
    \EE \| Q_m G_n P_m \|_{\mathcal{S}_1(\clh)}
    \leq 
    \EE \| Q_m  \Gamma(F_n, -L^{-1} F_n) P_m \|_{\mathcal{S}_1(\clh)} + \| Q_m \clt_{F_n} P_m \|_{\mathcal{S}_1(\clh)}.
\end{equation}
For the second summand on the right hand side of \eqref{eq:start-cross}, with explanations given below,
\begin{align}
\|P_m \clt_{F_n} Q_m\|_{\mathcal{S}_1(\clh)}
&=
\| (P_m \clt_{F_n}^{1/2})(\clt_{F_n}^{1/2} Q_m)\|_{\mathcal{S}_1(\clh)}
\nonumber
\\&\le 
\|P_m \clt_{F_n}^{1/2}\|_{\mathcal{S}_2(\clh)} \|\clt_{F_n}^{1/2} Q_m\|_{\mathcal{S}_2(\clh)}
\label{al:norm-ine-TR-HS-1}
\\&= 
\| | P_m \clt_{F_n}^{1/2} |^2 \|_{\mathcal{S}_1(\clh)}^{\frac{1}{2}} 
\| | \clt_{F_n}^{1/2} Q_m |^2 \|_{\mathcal{S}_1(\clh)}^{\frac{1}{2}},
\label{al:norm-ine-TR-HS-2}
\end{align}
where we refer to the proof of Proposition 18.8 ((d) implies (a)) in \cite{conway2000course} for \eqref{al:norm-ine-TR-HS-1}
and \eqref{al:norm-ine-TR-HS-2} follows by Definition 18.4 of \cite{conway2000course}.

Furthermore, by assumption and \eqref{eq:tail-convergence-trace} (one can argue similarly for the truncated part since $\| P_m \|_{\op}=1$), 
\begin{align}
\| | P_m \clt_{F_n}^{1/2} |^2 \|_{\mathcal{S}_1(\clh)} &=
\sum_{i=1}^{m}\langle \clt_{F_n} e_i,e_i\rangle_{\clh} \to \sum_{i=1}^{m}\langle \clt_{Z} e_i,e_i\rangle_{\clh},
\\
\| | \clt_{F_n}^{1/2} Q_m |^2 \|_{\mathcal{S}_1(\clh)}
&=
\sum_{i = m+1}^\infty \langle \clt_{F_n} e_i,e_i\rangle_{\clh} 
\to 
\sum_{i = m+1}^\infty \langle \clt_{Z} e_i,e_i\rangle_{\clh},
\end{align}
so together with \eqref{al:norm-ine-TR-HS-2}, we get, for fixed $m$, 
\begin{equation}\label{eq:cross-block}
\begin{split}
\lim_{n\to\infty}\|P_m \clt_{F_n} Q_m\|_{\mathcal{S}_1(\clh)}
\ &\le\
\Big(\sum_{i=1}^{m}\langle \clt_{Z} e_i,e_i\rangle_{\clh}\Big)^{1/2}\Big(\sum_{i = m+1}^\infty \langle \clt_{Z} e_i,e_i\rangle_{\clh}\Big)^{1/2} \\
& \le \| \clt_Z \|_{\mathcal{S}_1(\clh)}^{1/2} \Big(\sum_{i = m+1}^\infty \langle \clt_{Z} e_i,e_i\rangle_{\clh}\Big)^{1/2},
\end{split}
\end{equation}
which converges to $0$ as $m \to \infty$.
Similarly, for the first summand in \eqref{eq:start-cross}, and since $ \Gamma(F_n, -L^{-1} F_n)$ is nonnegative by \eqref{eq:positivity-exp}, we get
\begin{align}
&
\EE \| Q_m  \Gamma(F_n, -L^{-1} F_n) P_m \|_{\mathcal{S}_1(\clh)}
\\&=
\EE \| (P_m  \Gamma(F_n, -L^{-1} F_n)^{1/2})( \Gamma(F_n, -L^{-1} F_n)^{1/2} Q_m)\|_{\mathcal{S}_1(\clh)}
\nonumber
\\&  \le \EE
\|P_m  \Gamma(F_n, -L^{-1} F_n)^{1/2}\|_{\mathcal{S}_2(\clh)} \| \Gamma(F_n, -L^{-1} F_n)^{1/2} Q_m\|_{\mathcal{S}_2(\clh)}
\label{al:norm-ine-TR-HS-1-gam}
\\&= 
\EE \| | P_m  \Gamma(F_n, -L^{-1} F_n)^{1/2} |^2 \|_{\mathcal{S}_1(\clh)}^{\frac{1}{2}}  
\| |  \Gamma(F_n, -L^{-1} F_n)^{1/2} Q_m |^2 \|_{\mathcal{S}_1(\clh)}^{\frac{1}{2}} 
\nonumber
\\&\le \left( \EE \| | P_m  \Gamma(F_n, -L^{-1} F_n)^{1/2} |^2 \|_{\mathcal{S}_1(\clh)} \right) ^{\frac{1}{2}}   \left(  \EE
\| |  \Gamma(F_n, -L^{-1} F_n)^{1/2} Q_m |^2 \|_{\mathcal{S}_1(\clh)}\right)^{\frac{1}{2}},
\label{al:norm-ine-TR-HS-3-gam}
\end{align}
where the last line follows from Cauchy-Schwarz. Furthermore,
\begin{align}
\EE \| | P_m  \Gamma(F_n, -L^{-1} F_n)^{1/2} |^2 \|_{\mathcal{S}_1(\clh)} &=
\sum_{i=1}^{m}\langle \clt_{F_n} e_i,e_i\rangle_{\clh} \to \sum_{i=1}^{m}\langle \clt_{Z} e_i,e_i\rangle_{\clh},
\\
\EE \| |  \Gamma(F_n, -L^{-1} F_n)^{1/2} Q_m |^2 \|_{\mathcal{S}_1(\clh)}
&=
\sum_{i = m+1}^\infty \langle \clt_{F_n} e_i,e_i\rangle_{\clh} 
\to 
\sum_{i = m+1}^\infty \langle \clt_{Z} e_i,e_i\rangle_{\clh},
\end{align}
so together with \eqref{al:norm-ine-TR-HS-3-gam}, we get, for fixed $m$, 
\begin{equation}\label{eq:cross-block-gam}
\lim_{n\to\infty} \EE \| Q_m  \Gamma(F_n, -L^{-1} F_n) P_m \|_{\mathcal{S}_1(\clh)}
\ \le\
\| \clt_{Z} \|_{\mathcal{S}_1(\clh)}^{1/2}\Big(\sum_{i = m+1}^\infty\langle \clt_{Z} e_i,e_i\rangle_{\clh}\Big)^{1/2}. 
\end{equation}
Therefore, combining \eqref{eq:applyTH31}, \eqref{eq:two-summands-trace-cov} \eqref{eq:summands3-trace}, \eqref{eq:conv-Pn-Pn}, \eqref{eq:first316-2}, \eqref{eq:start-cross}, \eqref{eq:cross-block} and \eqref{eq:cross-block-gam}, letting $n\to\infty$ and then $m \to \infty$, we obtain $d_2(F_n,Z)\to 0$, i.e., $F_n \xrightarrow{d} Z$, as $n \to \infty$.
\end{proof}

\begin{remark} \label{rmk1}
     \begin{enumerate}
\item 
    The result of Theorem \ref{thm:abstract-fourth-moment-theorem} is consistent with the multivariate Fourth-Moment Theorem \cite[Theorem 6.2.3]{nourdin2012normal} in the following way: The fourth moment condition in the infinite-dimensional setting is only needed for a fixed orthonormal basis. Likewise, in the multivariate setting, the fourth-moment convergence only needs to be satisfied for each component of the vector sequence $\{F_n\}$.

    However, on the level of second moments, we require convergence of the covariance operators in trace class norm. In the finite-dimensional setting the variances and cross-covariances need to converge to those of the limiting variable. While in finite dimensions, trace class convergence and convergence of variances and cross-covariances are equivalent, the behavior in infinite dimensions is more nuanced. In particular, convergence of cross-covariances even along all orthonormal bases does not suffice to imply convergence of the covariance operators in trace class norm. 

    We emphasize that this comparison is based on the multivariate Fourth-Moment Theorem (Theorem 6.2.3 in \cite{nourdin2012normal}) assuming that the chaos order is the same across dimensions.
        \item 
        Note that condition \ref{item2-all:thm:abstract-fourth-moment-theorem} of Theorem \ref{thm:abstract-fourth-moment-theorem} can alternatively be written as 
        $\EE \left[ \langle F_n, u \rangle^4_{\clh}\right] \to \EE \left[ \langle Z, u \rangle^4_{\clh}\right]$, as $n \to \infty$, for every unit vector $u \in \clh$.
        
\item     The condition $\|\clt_{F_n} - \clt_Z \|_{\mathcal{S}_1(\clh)} \to 0$ required herein is stronger than the assumption $\tr(\clt_{F_n} - \clt_{Z}) \to 0$ required in \cite{Bou20}, or its equivalent formulation $\EE \|F_n\|^2_{\clh} \to \EE \|Z\|^2_{\clh}$. This stronger assumption is needed due to the identifiability issue illustrated through Example \ref{exp1}.

\item \label{rmk1-Gaussian} Note that for centered Gaussian measures $\{\mu_n\}_{n\ge 1}$ and $\mu$ on $\clh$, the weak convergence $\mu_n \to \mu$ in the topology of $\clh$ is equivalent to the convergence in trace class norm of their covariance operators. This can be seen by combining Example 3.8.15 in \cite{Bogachev1998} or Proposition 1 in \cite{BasBurCamPec25} with 
Theorem 2 in \cite{kubrusly1986convergence}. This is not true for an arbitrary (non Gaussian) sequence.

        \item   The statement of Theorem \ref{thm:abstract-fourth-moment-theorem} leads naturally to postulating the following question: If the convergence $\| \clt_{F_n} - \clt_Z\|_{\mathcal{S}_1(\clh)} \to 0$ is replaced by the weaker convergence in some Schatten $p$-norm, \eqref{def:Schatten-p-norm}, i.e.,
        \begin{equation}
            \| \clt_{F_n} - \clt_Z\|_{\cls_p(\clh)} \to 0, \quad \text{for some }p \in (1,\infty),
        \end{equation}
        could we still obtain an equivalence of items \ref{item1:thm:abstract-fourth-moment-theorem}--\ref{item5:thm:abstract-fourth-moment-theorem-contr}? The answer to this is negative, as the following counterexample shows.

        Fix some $p > 1$. We construct a Gaussian sequence $\{F_n\}_{n\ge 1}$ such that, $\| \clt_{F_n} - \clt_Z\|_{\cls_p(\clh)} \to 0$, $\| \clt_{F_n} - \clt_Z\|_{\mathcal{S}_1(\clh)} = 1$ for all $n\ge 1$, item \ref{item2:thm:abstract-fourth-moment-theorem} is satisfied, but $F_n$ does not converge in distribution to a Gaussian element $Z$.

        Fix $\clh = \ell^2(\NN)$, select a basis $\{e_k \}_{k \in \NN}$ and choose a sequence $\{\lambda_k \}_{k \in \NN}$ such that $\lambda_k > 0$ for all $k \in \NN$ and $\sum_{i=1}^\infty \lambda_i < \infty$. Define the covariance operator $\langle \clt_Z e_i , e_j \rangle_{\clh} = \lambda_i \delta_{i,j},$ and denote the centered Gaussian random variable on $\ell^2$ with covariance operator $\clt_Z$ by $Z$. 

        Let $\alpha_k \doteq k^{-\gamma}$, where $\gamma \in \left( 1 - (p-1)/p, 1\right)$, $S_n \doteq \sum_{k=1}^n \alpha_k$, and $s_{n,k} \doteq \frac{\alpha_k}{S_n},$ if $k \le n$, and $0$ otherwise. Define
        \begin{equation}
            Y_n \doteq \sum_{k=1}^{n} \xi_k \sqrt{s_{n,k}} e_k,
        \end{equation}
        where $\{\xi_i\}_{i \in \NN}$ are $\RR$-valued, i.i.d. standard normal random variables that are independent of $Z$. Then, we have that $\langle \clt_{Y_n} e_i, e_j \rangle_{\clh} = \delta_{i,j} s_{n,i} \1_{\{i \le n\}}$. Let $F_n = Z + Y_n$, which is a centered Gaussian random variable with covariance operator $\clt_{F_n} = \clt_Z + \clt_{Y_n}$. In particular, for all $n$, $F_n$ lives in the Wiener chaos of order one.

        Note that, by construction,
        \begin{equation}
        \begin{split}
            \| \clt_{F_n} - \clt_Z \|_{\cls_1 (\clh)} &= \| \clt_{Y_n}  \|_{\cls_1 (\clh)} = \sum_{k=1}^n s_{n,k} = 1, \\
            \| \clt_{F_n} - \clt_Z \|^p_{\cls_p (\clh)} &= \| \clt_{Y_n}  \|^p_{\cls_p (\clh)} = \sum_{k=1}^n s^p_{n,k} = \frac{\sum_{k=1}^n k^{-p \gamma}}{ \left( \sum_{k=1}^n \alpha_k \right)^p} \to 0,
        \end{split}
        \end{equation}
        where the convergence in the second line follows since $-p \gamma < -1$ for $p \in (1,\infty)$, hence the numerator converges to a finite value while the denominator diverges. Moreover, 
        \[
        \EE \| F_n \|^2_{\clh} = \EE \sum_{i=1}^\infty \langle Z, e_i \rangle_\clh^2 + \EE \sum_{i=1}^\infty \langle Y_n, e_i \rangle_\clh^2 = \EE \|Z\|_\clh^2 + \sum_{k=1}^n s_{n,k} = \EE \|Z\|_\clh^2  +1.
        \]
        By a well-known result for the convergence in distribution of Gaussian measures (see Remark \ref{rmk1} \eqref{rmk1-Gaussian}), this implies that $F_n$ does not converge to $Z$ in distribution. Finally, we see that, for all $i \in \NN$,
        \begin{equation}
        \begin{split}
        \EE \langle F_n, e_i \rangle^4_\clh &= \EE \langle Z, e_i \rangle^4_\clh + 6 \EE \langle Z, e_i \rangle^2_\clh \EE \langle Y_n, e_i \rangle^2_\clh + \EE \langle Y_n, e_i \rangle^4_\clh \\
        &= 3\lambda_i^2 + 6 \lambda_i s_{n,i} + 3s_{n,i}^2 \to \EE \langle Z, e_i \rangle^4_\clh,
        \end{split}
        \end{equation}
        since $s_{n,i} \to 0$ as $n \to \infty$. 
        This completes the claim. Finally, we remark that analogous constructions can be done for a sequence $\{F_n\}_{n\ge 1}$ that belongs to a fixed Wiener chaos of order different than one, by choosing the sequence $\{ \xi_n \}_{n\ge 1}$ accordingly.
    \end{enumerate}
\end{remark}

\subsection{The case of a finite chaos expansion}

The following theorem presents a version of the Fourth-Moment Theorem for $\clh$-valued random variables admitting the finite chaos expansion given by 
\begin{equation} \label{eq:finite-chaos-expansions}
F_n = \sum_{r=1}^N \cli_{r}(f_{n,r}), \quad f_{n,r} \in \mfh^{\odot r} \otimes \clh, \; N \in \NN.
\end{equation}

The result below can be proven under an equivalence of vanishing contractions, converging fourth moments, and convergence in distribution, in analogy with the statement of Theorem \ref{thm:abstract-fourth-moment-theorem}. For simplicity, we only state the most important implication based on vanishing contractions.

\begin{theorem}[Infinite-dimensional Fourth-Moment Theorem -- Finite Chaos Expansion] \label{cor:abstract-fourth-moment-theorem-finite-chaos}
Let $\{ F_n \}_{n\ge 1}$ be an $\clh$-valued sequence admitting the chaos expansion given in \eqref{eq:finite-chaos-expansions} for some $N \ge 2$, with respective covariance operators $\clt_{F_n}$. Let $Z$ be a centered, nondegenerate, Gaussian, $\clh$-valued random variable with covariance operator $\clt_{Z}$. Assume that $\| \clt_{F_n} - \clt_Z \|_{\mathcal S_1(\clh)} \to 0$. Fix an orthonormal basis $\{e_i\}_{i \in \NN}$ of $\clh$ and let $f_{n,r,i} \doteq \langle f_{n,r},e_i \rangle_\clh$. Suppose that for $i\in \NN$ and $r=2,\dots,N$,
\begin{equation} \label{eq:contr-finite-chaos-assum}
  \|  f_{n,r,i}  \otimes_m  f_{n,r,i}  \|_{\mfh^{\otimes (2r - 2m)}} \to 0
\end{equation}
for all $m=1,\dots,r-1$ as $n \to \infty$.
Then $F_n \xrightarrow{d} Z$.
\end{theorem}

\begin{proof}
Recall $F_n$ from \eqref{eq:finite-chaos-expansions} and the notation
\begin{equation}
    \Gamma(F,G) = \langle D_M F, D_M G \rangle_{\mathfrak{H}}, \quad F,G \in \mathbb{D}^{1,2}.
\end{equation}
By Theorem \ref{thm:1} since $F_n \in \mathbb{D}^{1,2}$ and by the triangle inequality, we have 
\begin{equation}\label{eq:two-terms}
\begin{aligned}
d_2(F_n,Z) &\le \tfrac12 \EE\|  \Gamma(F_n, -L^{-1} F_n) - \clt_Z \|_{\mathcal S_1(\clh)} \\&\le
\frac{1}{2}\EE\|  \Gamma(F_n, -L^{-1} F_n) - \clt_{F_n} \|_{\mathcal S_1(\clh)} + \frac{1}{2} \| \clt_{F_n} - \clt_Z \|_{\mathcal S_1(\clh)}.
\end{aligned}
\end{equation}
The second term on the right side of \eqref{eq:two-terms} vanishes by assumption. It suffices to show that
\begin{equation}\label{eq:key-vanish}
\EE\|  \Gamma(F_n, -L^{-1} F_n) - \clt_{F_n} \|_{\mathcal S_1(\clh)} \to 0.
\end{equation}
Let $P_m, Q_m, G_n$ be defined as in \eqref{def:pmqmgn}.
By decomposing $G_n$ along $P_m$ and $Q_m$, one has
\begin{equation}\label{eq:3-blocks}
\EE\|G_n\|_{\mathcal S_1(\clh)}
\le
\EE\|P_m G_n P_m\|_{\mathcal S_1(\clh)}
+ 2 \EE\|Q_m G_n P_m\|_{\mathcal S_1(\clh)}
+ \EE\|Q_m G_n Q_m\|_{\mathcal S_1(\clh)}.
\end{equation}

For the finite-dimensional block, since $\mathrm{rank}(P_m G_n P_m) \le m$ and from \eqref{eq:Trace-HS-ineq}, we have
\begin{equation}\label{eq:finite-SHS}
\EE\|P_m G_n P_m\|_{\mathcal S_1(\clh)}
\le
\sqrt m \Big( \sum_{i,j=1}^{m} \EE \langle G_n e_i, e_j\rangle_{\clh}^2 \Big)^{1/2}.
\end{equation}
Write $F_{n,i}=\sum_{r=1}^N I_r(f_{n,r,i})$.
Then, by Lemma \ref{le:analogue-Th-2.6} \ref{item1:lem:Gamma-covariance} below, 
\begin{align}
&
\sum_{i,j=1}^{m}\EE\langle G_n e_i,e_j\rangle_{\clh}^2 
\\&=
\sum_{i,j=1}^{m}
\EE \left(
\Gamma(F_{n,i}, -L^{-1}F_{n,j}) - \EE \Gamma(F_{n,i}, -L^{-1}F_{n,j})
\right)^2
\\&\le
N^2 
\sum_{i,j=1}^{m}
\sum_{r,s=1}^N 
\EE
\Big(\Gamma(I_r(f_{n,r,i}), -L^{-1}I_s(f_{n,s,j}))
- \EE\Gamma(I_r(f_{n,r,i}), -L^{-1}I_s(f_{n,s,j})) \Big)^2
\label{eq:sum-ij}\\
&\le  N^2 
\sum_{i,j=1}^{m} \Bigg( \sum_{r=1}^N \EE
\Big( \Gamma(I_r(f_{n,r,i}), -L^{-1}I_r(f_{n,r,j}))
- \frac{1}{r} \EE I_r(f_{n,r,i}) I_r(f_{n,r,j}) \Big)^2   \\
&\hspace{1.5cm}  \qquad +
\sum_{\substack{r,s=1 \\ r \neq s}}^N 
\EE
\Big( \Gamma(I_r(f_{n,r,i}), -L^{-1} I_s(f_{n,s,j}) ) \Big)^2   \Bigg) \label{eq:9800}
\\
&\le
N^2 
 \sum_{i,j=1}^m \Bigg(\frac{1}{2} \sum_{r=1}^N \sum_{\chi =1}^{r-1} c_r(\chi)^2 \left(  \| f_{n,r,i} \otimes_{r-\chi} f_{n,r,i} \|_{\mfh^{\otimes 2\chi}}^2 + \| f_{n,r,j} \otimes_{r-\chi} f_{n,r,j} \|_{\mfh^{\otimes 2\chi}}^2 \right)
\\&\hspace{1cm}+ 
\sum_{\substack{r,s=1 \\ r \neq s}}^N
\Big(   c(r,s) \| f_{n,r,i} \|^2_{\mfh^{\otimes r}} \| f_{n,s,j} \otimes_{s-r}  f_{n,s,j} \|_{\mfh^{\otimes 2 r}} \1_{\{r < s\}} \\
&\hspace{2cm} + c(s,r) \| f_{n,s,j} \|^2_{\mfh^{\otimes s}} \| f_{n,r,i} \otimes_{r-s}  f_{n,r,i} \|_{\mfh^{\otimes 2 s}} \1_{\{s < r\}}  
\\ &\hspace{1cm}+  \sum_{\chi =1}^{r \wedge s-1} c(r \wedge s, r \vee s,\chi) \left(  \| f_{n,r,i} \otimes_{r-\chi}  f_{n,r,i} \|_{\mfh^{\otimes 2\chi}}^2 + \| f_{n,s,j} \otimes_{s-\chi}  f_{n,s,j} \|_{\mfh^{\otimes 2 \chi}}^2 \right) \Big) \Bigg).
\label{eq:9801}
\end{align}
In the estimates above, \eqref{eq:sum-ij} follows by Cauchy–Schwarz inequality. The terms in \eqref{eq:9800} follow by splitting the sum to $r=s$ and $r\neq s$, and recalling that $\EE \Gamma( I_r(f_{n,r,i}), I_s(f_{n,s,j})) = \delta_{r,s} \EE I_r(f_{n,r,i}) I_r(f_{n,r,j}) $. In the last inequality, we obtain four summands given across the last four lines in \eqref{eq:9801}. The first one is analogous to \eqref{eq:summands4-trace-1} and follows by equation (6.2.1) in \cite{nourdin2012normal} with $\alpha = \EE\langle D_M I_r(f_{n,r,i}), D_M I_r(f_{n,r,j})\rangle_{\mfh}$. The second, third, and fourth summands are consequences of (6.2.2) in \cite{nourdin2012normal} with $\alpha = 0$. The constants $c_r(\chi),c(r,s),$ and $c(r,s,\chi)$ are defined in \eqref{eq:remainder-terms-def-quant-3} below. 

By the assumption in \eqref{eq:contr-finite-chaos-assum}, all these contractions vanish. Hence for fixed $m$
\begin{equation}\label{eq:finite-vanish}
\sum_{i,j=1}^{m} \EE \langle G_n e_i, e_j\rangle_{\clh}^2 \to 0
\quad\text{and}\quad
\EE\|P_m G_n P_m\|_{\mathcal S_1(\clh)} \to 0.
\end{equation}

For the tail block in \eqref{eq:3-blocks}, note that $ \Gamma(F_n, -L^{-1} F_n)$ is neither a nonnegative operator, nor even self-adjoint (since $\Gamma(F_n, -L^{-1} F_n)^* = \Gamma(-L^{-1} F_n, F_n)$). To obtain tractable calculations for its trace class norm, we recast it as follows
\begin{multline}
    \| \Gamma(F_n, -L^{-1} F_n) \|_{\cls_1(\clh)} = \tr_\clh |\Gamma(F_n, -L^{-1} F_n)| \\
    = \tr_{\clh^2} \left|  \begin{pmatrix}
        0 & 0 \\ 
        0 & \Id_\clh
    \end{pmatrix}
    \begin{pmatrix}
    \Gamma(F_n, F_n) & \Gamma(F_n, -L^{-1}F_n)^* \\
   \Gamma(F_n, -L^{-1}F_n) & \Gamma(-L^{-1}F_n, -L^{-1}F_n)
    \end{pmatrix}
    \begin{pmatrix}
        0 & \Id_\clh \\ 
        0 & 0
    \end{pmatrix}\right|  \label{eq:Fn-matrix-Gamma}.
\end{multline}
For the equality in \eqref{eq:Fn-matrix-Gamma}, we used that 
\[
\tr_{\clh^2} \left| \begin{pmatrix}
   0 & 0 \\
    0 &  \Gamma(F_n, -L^{-1} F_n)
\end{pmatrix} \right| = \tr_{\clh^2}  \begin{pmatrix}
    0 & 0 \\
    0 & \left| \Gamma(F_n, -L^{-1} F_n)  \right|
\end{pmatrix}  = \| \Gamma(F_n, -L^{-1} F_n) \|_{\cls_1(\clh)}.
\]
Consequently, Theorem 18.11 (e) in \cite{conway2000course} and \eqref{eq:Fn-matrix-Gamma} imply
\begin{multline}
     \| \Gamma(F_n, -L^{-1} F_n) \|_{\cls_1(\clh)} = \tr_\clh |\Gamma(F_n, -L^{-1} F_n)| \\\le \norm{ \begin{pmatrix}
        0 & 0 \\ 
        0 & \Id_\clh
    \end{pmatrix} }_{\op} \norm{ \begin{pmatrix}
        0 & \Id_\clh \\ 
        0 & 0
    \end{pmatrix} }_{\op}   \norm{ \begin{pmatrix}
        \Gamma(F_n,  F_n) & \Gamma(F_n, -L^{-1} F_n)^* \\
        \Gamma(F_n, -L^{-1} F_n) & \Gamma(-L^{-1} F_n, -L^{-1} F_n)
    \end{pmatrix} }_{\cls_1(\clh^2)} \\
    \le  \norm{ \begin{pmatrix}
        \Gamma(F_n,  F_n) & \Gamma(-L^{-1} F_n,  F_n) \\
        \Gamma(F_n, -L^{-1} F_n) & \Gamma(-L^{-1} F_n, -L^{-1} F_n)
    \end{pmatrix} }_{\cls_1(\clh^2)}. \label{eq:fn-matrix}
\end{multline}
The key observation is that the operator (in matrix form) we constructed in the last line of \eqref{eq:fn-matrix} is a self-adjoint (cf. with item \ref{item1:Dirichlet-Th-2.6} of Lemma \ref{le:analogue-Th-2.6}) and nonnegative operator, since, for $u_1, u_2 \in \clh$,
\begin{align}
&
\left\langle
\begin{pmatrix}
    \Gamma(F_n, F_n) & \Gamma(-L^{-1}F_n, F_n) \\
   \Gamma(F_n, -L^{-1}F_n) & \Gamma(-L^{-1}F_n, -L^{-1}F_n)
\end{pmatrix}
\begin{pmatrix} u_1 \\ u_2 \end{pmatrix},
\begin{pmatrix} u_1 \\ u_2 \end{pmatrix}
\right\rangle_{\clh^2}
\\
&= \langle \Gamma(F_n, F_n) u_1, u_1 \rangle_{\clh}
+ \langle \Gamma(F_n, -L^{-1}F_n) u_1, u_2 \rangle_{\clh} \\
&\qquad + \langle \Gamma(-L^{-1}F_n, F_n) u_2, u_1 \rangle_{\clh}
+ \langle \Gamma(-L^{-1}F_n, -L^{-1}F_n) u_2, u_2 \rangle_{\clh}
\\
&= \big\langle \langle D_M F_n, u_1 \rangle_{\clh},
            \langle D_M F_n, u_1 \rangle_{\clh} \big\rangle_{\mfh}
+ \big\langle \langle D_M(-L^{-1}F_n), u_2 \rangle_{\clh},
            \langle D_M F_n, u_1 \rangle_{\clh} \big\rangle_{\mfh}
\\
&\qquad + \big\langle \langle D_M F_n, u_1 \rangle_{\clh},
            \langle D_M(-L^{-1}F_n), u_2 \rangle_{\clh} \big\rangle_{\mfh}
+ \big\langle \langle D_M(-L^{-1}F_n), u_2 \rangle_{\clh},
            \langle D_M(-L^{-1}F_n), u_2 \rangle_{\clh} \big\rangle_{\mfh}
\\
&= \big\| \langle D_M F_n, u_1 \rangle_{\clh}
       + \langle D_M(-L^{-1}F_n), u_2 \rangle_{\clh}
   \big\|_{\mfh}^{2} \ge 0.
\end{align}
Hence, since $\Gamma(F,F)$ are self-adjoint and a.s. positive semidefinite operators, by Lemma \ref{le:analogue-Th-2.6} \ref{item1:Dirichlet-Th-2.6}, we have the explicit decomposition
\begin{equation} \label{eq:matrix-norm-fn-gn}
     \norm{ \begin{pmatrix}
        \Gamma(F_n,  F_n) & \Gamma(F_n, -L^{-1} F_n)^* \\
        \Gamma(F_n, -L^{-1} F_n) & \Gamma(-L^{-1} F_n, -L^{-1} F_n)
    \end{pmatrix} }_{\cls_1(\clh^2)} = \tr_\clh(\Gamma(F_n,  F_n)) + \tr_\clh(\Gamma(-L^{-1} F_n, -L^{-1} F_n)).
\end{equation}
Then, after applying triangle inequality, the same calculations leading to \eqref{eq:matrix-norm-fn-gn} give
\begin{align}
    &
    \EE \| Q_m G_n Q_m \|_{\cls_1(\clh)} \le \EE \| Q_m \Gamma(F_n, -L^{-1} F_n )Q_m \|_{\cls_1(\clh)} +  \| Q_m \clt_{F_n} Q_m \|_{\cls_1(\clh)} \\
    &\le \EE  \tr_\clh( Q_m \Gamma(F_n,  F_n) Q_m ) + \EE \tr_\clh( Q_m \Gamma(-L^{-1} F_n, -L^{-1} F_n) Q_m) +   \| Q_m \clt_{F_n} Q_m \|_{\cls_1(\clh)}. \label{eq:321}
\end{align}
We consider the summands in \eqref{eq:321} separately. 
For the first one, we write
\begin{equation} \label{eq:322}
\begin{split}
        \EE  \tr( Q_m \Gamma(F_n,  F_n) Q_m ) 
        &=  \sum_{i= m+1}^\infty \EE \langle \Gamma(F_n,  F_n)e_i, e_i \rangle_\clh \\
        &= \sum_{i= m+1}^\infty  \sum_{r_1, r_2 =1}^N r_1 r_2 \EE [ \langle \langle  \cli_{r_1-1}(f_{n,r_1}), e_i \rangle_\clh , \cli_{r_2-1}(f_{n,r_2}), e_i \rangle_\clh  \rangle_\mfh ] \\
        &= \sum_{i= m+1}^\infty  \sum_{r_1,r_2 =1}^N r_1 r_2 \EE [ \langle I_{r_1-1}(f_{n,r_1,i}) ,I_{r_2-1}(f_{n,r_2,i})  \rangle_\mfh ] \\
        &= \sum_{i= m+1}^\infty  \sum_{r =1}^N  r^2 (r-1)!  \| f_{n,r,i}  \|^2_{\mfh^{\otimes r}} \\
        &\le N \sum_{i= m+1}^\infty  \sum_{r =1}^N   r!  \| f_{n,r,i}  \|^2_{\mfh^{\otimes r}}  \\
        &= N \sum_{i= m+1}^\infty  \langle \EE \Gamma(F_n,  -L^{-1} F_n)e_i, e_i \rangle_\clh  \\
        &= N  \| Q_m \clt_{F_n}  Q_m \|_{\cls_1(\clh)},
\end{split}
\end{equation}
where the fourth equality follows from Theorem 2.7.10 of \cite{nourdin2012normal}; see also the calculation below (6.2.2) therein. Analogous considerations give
\begin{equation} \label{eq:323}
\begin{split}
        \EE  \tr( Q_m \Gamma(-L^{-1} F_n, -L^{-1} F_n) Q_m ) 
        &= \sum_{i= m+1}^\infty  \sum_{r_1, r_2 =1}^N  \EE [ \langle \langle  \cli_{r_1-1}(f_{n,r_1}), e_i \rangle_\clh , \cli_{r_2-1}(f_{n,r_2}), e_i \rangle_\clh  \rangle_\mfh ] \\
        &= \sum_{i= m+1}^\infty  \sum_{r =1}^N   (r-1)!  \| f_{n,r,i}  \|^2_{\mfh^{\otimes r}} \\
        &\le  \sum_{i= m+1}^\infty  \sum_{r =1}^N   r!  \| f_{n,r,i}  \|^2_{\mfh^{\otimes r}}  \\
        &=  \sum_{i= m+1}^\infty  \langle \EE \Gamma(F_n,  -L^{-1} F_n)e_i, e_i \rangle_\clh  \\
        &=   \| Q_m \clt_{F_n}  Q_m \|_{\cls_1(\clh)}.
\end{split}
\end{equation}
Combining \eqref{eq:321}, \eqref{eq:322}, and \eqref{eq:323},
\begin{multline} \label{eq:qm-qm-quantitative}
    \EE \| Q_m G_n Q_m \|_{\cls_1(\clh)} \le (N+2) \| Q_m \clt_{F_n}  Q_m \|_{\cls_1(\clh)} = (N+2) \sum_{i=m+1}^\infty \EE \langle \clt_{F_n} e_i, e_i \rangle_\clh \\
    \to \sum_{i=m+1}^\infty \langle \clt_Z e_i, e_i \rangle_\clh , \quad \text{as }n \to \infty,
\end{multline}
where the convergence follows from $\| \clt_{F_n} - \clt_Z \|_{\cls_1(\clh)} \to 0$ and the last quantity vanishes as $m \to \infty$ since $\clt_Z \in \cls_1(\clh)$. 

For the cross-terms in \eqref{eq:3-blocks}, the same calculations as in \eqref{eq:start-cross}--\eqref{eq:cross-block-gam} of Theorem \ref{thm:abstract-fourth-moment-theorem} yield the estimates
\begin{equation}\label{eq:cross-cov}
\begin{aligned}
\| Q_m \clt_{F_n} P_m \|_{\mathcal S_1(\clh)}
&\le 
\|  | P_m \clt_{F_n}^{1/2} |^2 \|_{\mathcal S_1(\clh)}^{1/2}
\ \|  | \clt_{F_n}^{1/2} Q_m |^2 \|_{\mathcal S_1(\clh)}^{1/2}
\\&= 
\Big( \sum_{i=1}^{m} \langle \clt_{F_n} e_i,e_i\rangle_{\clh} \Big)^{1/2}
\Big( \sum_{i=m+1}^\infty \langle \clt_{F_n} e_i,e_i\rangle_{\clh} \Big)^{1/2}.
\end{aligned}
\end{equation}
Similarly, for $\EE \|Q_m  \Gamma(F_n, -L^{-1} F_n) P_m \|_{\cls_1(\clh)}$, following the arguments in \eqref{eq:Fn-matrix-Gamma}--\eqref{eq:323}, we get
\begin{align}
&
\EE \| Q_m  \Gamma(F_n, -L^{-1} F_n) P_m \|_{\mathcal{S}_1(\clh)}
\\&\le \left( \EE \| | P_m  \Gamma(F_n, -L^{-1} F_n)^{1/2} |^2 \|_{\mathcal{S}_1(\clh)} \right) ^{\frac{1}{2}}   \left(  \EE
\| |  \Gamma(F_n, -L^{-1} F_n)^{1/2} Q_m |^2 \|_{\mathcal{S}_1(\clh)}\right)^{\frac{1}{2}}
\\&= 
\Big( \sum_{i=1}^{m} \langle \clt_{F_n} e_i,e_i\rangle_{\clh} \Big)^{1/2}
\Big( (N+2) \sum_{i=m+1}^\infty \langle \clt_{F_n} e_i,e_i\rangle_{\clh} \Big)^{1/2}.
\label{al:norm-ine-TR-HS-3-gam-sum}
\end{align}
Using that $\| \clt_{F_n} - \clt_Z \|_{\mathcal S_1(\clh)} \to 0$ we get the desired bound.
\end{proof}

\subsection{The case of infinite chaos expansion}

We now deal with the most general case. It is known that any centered sequence of random variables $F_n \in L^2(\Om:\clh)$ that is measurable with regard to a Gaussian process admits the chaos decomposition
\begin{equation} \label{eq:infinite-chaos-expansion}
    F_n = \sum_{r=1}^\infty \cli_r (f_{n,r}) = \sum_{r=1}^\infty (I_r \otimes \operatorname{Id}_{\clh})(f_{n,r}), \quad f_{n,r} = \sum_{i=1}^\infty f_{n,r,i} \otimes e_i \in \mfh^{\odot r} \otimes \clh;
\end{equation}
see Section 4 in \cite{dukZou24}. This representation allows us to derive sufficient conditions for convergence in distribution of the sequence $\{F_n\}_{n\ge 1}$. We give the statement in terms of  contractions, but note that different but equivalent formulations are possible; see Remark \ref{remark:equivalent-conditions-general}.

\begin{theorem}[Infinite-dimensional Fourth-Moment Theorem - Infinite Chaos Expansion]
    \label{thm:abstract-fourth-moment-theorem-infinite-chaos}
Let $\{ F_n \}_{n\ge 1}$ be an $\clh$-valued sequence admitting the representation given in \eqref{eq:infinite-chaos-expansion}, with respective covariance operators $\clt_{F_n}$. Let $\{Z_r: r \ge 1\}$ be a family of centered $\clh$-valued normal random variables with respective covariance operators $\clt_{Z_r}$. 
Fix an orthonormal basis $\{e_i\}_{i \in \NN}$ of $\clh$ and set $f_{n,r,i} \doteq \langle f_{n,r},e_i \rangle_\clh$.
Suppose that the following conditions are satisfied:
\begin{enumerate}[label=(\roman*)]
    \item for $r \ge 1$, $\| \clt_{\mathcal{I}_r(f_{n,r})} - \clt_{Z_r}\|_{\mathcal{S}_1(\clh)} \to 0$. \label{item1:thm-infinite-expansions-covar-oper}
    \item Letting $\clt_Z \doteq  \sum_{r=1}^\infty \clt_{Z_r}$, we have that $\clt_Z$ is nondegenerate and $\| \clt_{Z} \|_{\mathcal{S}_1(\clh)} < \infty$.
    \label{item1.1:thm-infinite-expansions-trace-sec-mom}
   \item For $i\in \NN$ and $r \ge 2$,
    \[
    \lim_{n \to \infty} \|  f_{n,r,i}  \otimes_m  f_{n,r,i}  \|_{\mfh^{\otimes 2(r - m)}} = 0,
    \]
    for all $m=1,\dots,r-1$.  \label{item2:thm-infinite-expansions-contr}
    \item It holds that \label{item3:thm-infinite-expansions-tail}
    \begin{equation}
        \lim_{N \to \infty} \sup_{n \ge 1}  \sum_{r=N+1}^\infty \sum_{i=1}^\infty r! \|f_{n,r,i}\|_{\mfh^{\otimes r}}^2 = 0.
    \end{equation}
\end{enumerate}
Then, $F_n \xrightarrow{d} Z$, as $n \to \infty$, where $Z$ is the centered $\clh$-valued, normal random variable, with covariance operator $\clt_{Z}$.
\end{theorem}

\begin{proof}
Let 
\[ 
F_{n,N} \doteq \sum_{r=1}^N \cli_r (f_{n,r}) = \sum_{r=1}^N (I_r \otimes \operatorname{Id}_{\clh})(f_{n,r}), \quad f_{n,r} = \sum_{i=1}^\infty f_{n,r,i} \otimes e_i \in \mfh^{\odot r} \otimes \clh, \quad N \in \NN.
\]
   To avoid non-degeneracy assumptions on the individual components $Z_r$, we apply the triangle inequality and Theorem \ref{thm:1} to obtain.
   \begin{equation} \label{eq:infinite-chaos-exp-1}
    \begin{split}
         d_2(F_n,Z) &\le d_2(F_n, F_{n,N}) + d_2(F_{n,N}, Z) \\
         &\le d_2(F_n,F_{n,N}) + \frac{1}{2} \EE \| \Gamma(F_{n,N}, -L^{-1} F_{n,N}) - \clt_{Z} \|_{\cls_1(\clh)} \\
         &\le d_2(F_n,F_{n,N}) + \frac{1}{2} \EE \| \Gamma(F_{n,N}, -L^{-1} F_{n,N}) - \clt_{Z_N} \|_{\cls_1(\clh)} + \frac{1}{2} \|\clt_{Z_N} - \clt_Z \|_{\cls_1(\clh)},
    \end{split}
   \end{equation}
    where for all $N \in \NN$, $Z_N$ is the (possibly degenerate) centered Gaussian element in $\clh$ with covariance operator $\sum_{r=1}^N \clt_{Z_r}$. Note that, since $F_{n,N}$ now admits a finite chaos expansion, it holds that for all $N$, $F_{n,N} \in \mathbb{D}^{1,2}(\clh)$. Since $F_{n,N}$ has a finite chaos expansion, under conditions \ref{item1:thm-infinite-expansions-covar-oper} and \ref{item2:thm-infinite-expansions-contr},  Theorem \ref{cor:abstract-fourth-moment-theorem-finite-chaos} says that, for each fixed $N$, 
    \begin{equation} \label{eq:infinite-chaos-exp-2}
        \EE \| \Gamma(F_{n,N}, -L^{-1} F_{n,N}) - \clt_{Z_N} \|_{\cls_1(\clh)} \to 0,
    \end{equation}
    as $n \to \infty$. Moreover, since both $Z_N$ and $Z$ are Gaussian random variables,
    \begin{equation} \label{eq:infinite-chaos-exp-3}
         \|\clt_Z - \clt_{Z_N} \|_{\mathcal{S}_1(\clh)} = \sum_{r=N+1}^\infty \sum_{i=1}^\infty \langle \clt_{Z_r} e_i, e_i \rangle_{\clh} \to 0,
    \end{equation}
    where the equality follows upon noticing that $\clt_Z - \clt_{Z_N}$ is the covariance operator of the random variable $\sum_{r=N+1}^\infty Z_r$, and the convergence, since by condition \ref{item1.1:thm-infinite-expansions-trace-sec-mom}, we have
    \[
    \sum_{i=1}^\infty \langle \clt_{Z} e_i, e_i \rangle_{\clh}= \sum_{r=1}^\infty \sum_{i=1}^\infty \langle \clt_{Z_r} e_i, e_i \rangle_{\clh} < \infty.
    \]
    It remains to prove that $d_2(F_{n,N},F_n) \to 0$. For this, we write 
    \begin{align} 
    &d_2(F_{n,N},F_n) 
    = 
    \sup_{h \in \clc_b^2(\clh), \|h\|_{\clc_b^2(\clh)} \le 1} |\EE [h(F_n)] - \EE [h(F_{n,N})]| 
    \le \sup_{h \in \Lip^1_b(\clh)}  
    |\EE [h(F_n)] - \EE [h(F_{n,N})] |
    \\&\le 
    \EE \|F_n - F_{n,N} \|_\clh \le \sqrt{\EE \|F_n - F_{n,N} \|_\clh^2} = \sqrt{\EE \norm{ \sum_{r=N+1}^\infty \cli_r(f_{n,r}) }_\clh^2} 
    \label{eq:infinite-chaos-exp-4-1}
    \\&= 
    \sqrt{\sum_{r=N+1}^\infty  \EE \| \cli_r(f_{n,r}) \|_\clh^2} \le   \sqrt{\sum_{r=N+1}^\infty  r! \| f_{n,r} \|_{\mfh^{\otimes r} \otimes \clh}^2} 
    \le 
    \sqrt{\sup_{n \ge 1} \sum_{i =1}^\infty \sum_{r=N+1}^\infty  r! \| f_{n,r,i} \|_{\mfh^{\otimes r}}^2} \label{eq:infinite-chaos-exp-4}.
    \end{align}
    In the calculations above, the second inequality in \eqref{eq:infinite-chaos-exp-4-1} follows by Cauchy-Schwarz. In \eqref{eq:infinite-chaos-exp-4}, the equality follows by the orthogonality of the multiple integrals, the first inequality by their isometry property (for these two see, e.g., display (2.10) in \cite{vidotto2025functional}), and the last inequality by Parseval's identity.

    Combining \eqref{eq:infinite-chaos-exp-1}, \eqref{eq:infinite-chaos-exp-2}, \eqref{eq:infinite-chaos-exp-3}, \eqref{eq:infinite-chaos-exp-4} and condition \ref{item3:thm-infinite-expansions-tail}, the conclusion follows.
\end{proof}

\begin{remark} \label{remark:equivalent-conditions-general}
    Condition \ref{item2:thm-infinite-expansions-contr} of Theorem \ref{thm:abstract-fourth-moment-theorem-infinite-chaos} can be replaced by equivalent conditions, e.g., with regard to the convergence of the fourth moments as given in item \ref{item2:thm:abstract-fourth-moment-theorem} of Theorem \ref{thm:abstract-fourth-moment-theorem}; see Remark 6.3.2 of \cite{nourdin2012normal} for the analogous fact in finite dimensions.
\end{remark}

\subsection{Quantitative bounds}

In this section, we provide bounds that quantify the rate of convergence to a normal random variable with regard to the $d_2$ distance. Although these bounds are cumbersome, they can become fairly explicit in specific applications.

Define the terms

\begin{equation} \label{eq:remainder-terms-def-quant-1}
\begin{split}
    \clr_{1,N} 
    &\doteq        
    \sqrt{\sup_{n \ge 1}  \sum_{i=1}^\infty \sum_{r=N+1}^\infty r! \|f_{n,r,i}\|_{\mfh^{\otimes r}}^2}   
    \\
    \clr_{2,n,N} 
    &\doteq  
    \frac{1}{2} \sum_{r = 1}^N  \|  \clt_{\cli_r(f_{n,r})} - \clt_{Z_r}\|_{\mathcal{S}_1(\clh)}     
    \\
    \clr_{3,n,m,N} 
    &\doteq \sqrt{m} N \sqrt{ 
    \sum_{i,j=1}^m  \left( \sum_{l=1}^N  \gamma_{n,i,j}^{(l)} + \sum_{l_1 \neq l_2=1}^N  \gamma_{n,i,j}^{(l_1,l_2)}  \right) }  
    \\
    \clr_{4,n,m,N} 
    &\doteq \frac{N+2}{2} \sum_{r=1}^N  
    \sum_{j=m+1}^{\infty} \langle \clt_{\cli_r(f_{n,r})} e_j,e_j \rangle_{\clh}    \\ \clr_{5,n,m,N}  &\doteq  \| \clt_{F_n} \|_{\mathcal{S}_1(\clh)}^{1/2} \left( (N+3)
\sum_{r=1}^N 
\sum_{j=m+1}^{\infty} \langle \clt_{\cli_r(f_{n,r})} e_j,e_j \rangle_{\clh} 
\right)^{1/2}  \\
    \clr_{6,N} &\doteq \frac{1}{2} \sum_{r=N+1}^\infty \sum_{i=1}^\infty \langle \clt_{Z_r} e_i, e_i \rangle_{\clh},
\end{split}
\end{equation}
with
\begin{equation}\label{eq:remainder-terms-def-quant-2}
\begin{aligned} 
    \gamma_{n,i,j}^{(l)} 
    &\doteq \frac{1}{2}  \sum_{\chi=1}^{l-1} c_l(\chi)^2 \left(  
    \| f_{n,l,i} \otimes_{l-\chi} f_{n,l,i} \|_{\mfh^{\otimes 2 \chi}}^2 + \| f_{n,l,j} \otimes_{l-\chi} f_{n,l,j} \|_{\mfh^{\otimes 2\chi}}^2 \right),
    \\
    \gamma_{n,i,j}^{(l_1,l_2)} 
    &\doteq 
    c(l_1,l_2) \| f_{n,l_1,i} \|^2_{\mfh^{\otimes l_1}} 
    \| f_{n,l_2,j} \otimes_{l_2-l_1}  f_{n,l_2,j} \|_{\mfh^{\otimes 2 l_1}} \1_{\{l_1 < l_2\}} 
    \\&\hspace{0.5cm}+ 
    c(l_2,l_1) \| f_{n,l_2,j} \|^2_{\mfh^{\otimes l_2 }} 
    \| f_{n,l_1,i} \otimes_{l_1-l_2}  f_{n,l_1,i} \|_{\mfh^{\otimes 2 l_2}} \1_{\{l_2 < l_1\}}
    \\&\hspace{0.5cm}+
    \sum_{\chi =1}^{l_1 \wedge l_2 -1} c(l_1 \wedge l_2 ,l_1 \vee l_2,\chi) 
    \left(  \| f_{n,l_1,i} \otimes_{l_1-\chi}  f_{n,l_1,i} \|_{\mfh^{\otimes 2\chi}}^2 
    + \| f_{n,l_2,j} \otimes_{l_2-\chi}  f_{n,l_2,j} \|_{\mfh^{\otimes 2 \chi}}^2 \right),
\end{aligned}
\end{equation}
where we have used the constants
\begin{equation} \label{eq:remainder-terms-def-quant-3}
\begin{split}
    c_{p}(r) 
    &\doteq p (r-1)! \binom{p-1}{r-1}^2 \sqrt{(2p-2r)!}, 
    \\
    c(p,q) 
    &\doteq p!^{2} \binom{q-1}{p-1}^{2} (q - p)!, \\
    c(p,q,\chi) &\doteq 
    \frac{p^{2}}{2}
    ( \chi - 1)!^{2}
    \binom{p-1}{\chi-1}^{2}
    \binom{q-1}{\chi-1}^{2}
    (p+q-2\chi)!,
    \qquad 1 \le \chi \le p\wedge q -1.
\end{split}
\end{equation}

\begin{theorem} \label{thm:quantitative-CLT}
Let $\{F_n\}_{n\ge 1}$ be as in \eqref{eq:infinite-chaos-expansion}. Then, for all $n \geq 1$,
    \begin{equation} \label{thm:eq-quantitative-bound}
        d_2(F_n, Z) \le \inf_{N \ge 1} \left[ \clr_{1,N} + \clr_{6,N} + \clr_{2,n,N} +  \inf_{m \ge 1} \left(   \clr_{3,n,m,N} + \clr_{4,n,m,N} + \clr_{5,n,m,N}    \right) \right],
    \end{equation}
    where the terms above were defined in \eqref{eq:remainder-terms-def-quant-1}, \eqref{eq:remainder-terms-def-quant-2}, and \eqref{eq:remainder-terms-def-quant-3}.
\end{theorem}

\begin{proof}
Recalling \eqref{eq:infinite-chaos-exp-1}, the first and third summand on the last line of \eqref{eq:infinite-chaos-exp-1} can be bounded by \eqref{eq:infinite-chaos-exp-4} and \eqref{eq:infinite-chaos-exp-3} in terms of $\clr_{1,N}, \clr_{6,N}$. It remains to justify the bound on $\frac{1}{2} \EE \| \Gamma(F_{n,N}, -L^{-1} F_{n,N}) - \clt_{Z_N} \|_{\cls_1(\clh)}$ in \eqref{eq:infinite-chaos-exp-1}.
        
Since, for each $n, N$, the random variable $F_{n,N}$ admits the representation given in \eqref{eq:finite-chaos-expansions}, the main tool for deriving a bound for $\frac{1}{2} \EE \| \Gamma(F_{n,N}, -L^{-1} F_{n,N}) - \clt_{Z_N} \|_{\cls_1(\clh)}$ is Theorem \ref{cor:abstract-fourth-moment-theorem-finite-chaos}. We consider first the bound in \eqref{eq:two-terms}, which gives
\begin{equation} \label{eq:quant-bound-fnN-ZN}
    \frac{1}{2} \EE \| \Gamma(F_{n,N}, -L^{-1} F_{n,N}) - \clt_{Z_N} \|_{\cls_1(\clh)} \le \frac{1}{2} \EE \| \Gamma(F_{n,N}, -L^{-1} F_{n,N}) - \clt_{F_{n,N}} \|_{\cls_1(\clh)} + \frac{1}{2}\| \clt_{F_{n,N}} - \clt_{Z_N} \|_{\cls_1(\clh)}.
\end{equation}
The second term on the right side of \eqref{eq:quant-bound-fnN-ZN} is bounded by $\clr_{2,n,N}$ by triangle inequality and the observation that $\clt_{F_{n,N}} = \sum_{r=1}^N \clt_{\cli_r(f_{n,r})}$, which is due to the uncorrelatedness of the multiple integrals.

We now justify the bound on the first term on the right side of \eqref{eq:quant-bound-fnN-ZN}.
For that, we use the bound in \eqref{eq:3-blocks} whose three summands on the right hand side can respectively be bounded by $\clr_{3,n,m,N}$, $\clr_{5,n,m,N}$ and $\clr_{4,n,m,N}$.

The third term on the right side of \eqref{eq:3-blocks} can be bounded by $\clr_{4,n,m,N}$, using the estimate in \eqref{eq:qm-qm-quantitative}. The term $\clr_{5,n,m,N}$ corresponds to the second term on the right side of \eqref{eq:3-blocks}, by employing the estimate in \eqref{eq:cross-cov} and \eqref{al:norm-ine-TR-HS-3-gam-sum}. Finally, the term $\clr_{3,n,m,N}$ arises by bounding the first term on the right side of \eqref{eq:3-blocks}. The bound is a consequence of combining the estimates in \eqref{eq:finite-SHS} and \eqref{eq:9801}. This concludes the proof
\end{proof}

\begin{remark}\label{remark-quanti}
\begin{enumerate}
    \item The bound in \eqref{thm:eq-quantitative-bound} can be significantly  simplified when $F_{n}$ assumes a simpler form. In particular, when $F_{n}$ admits the finite chaos expansion of \eqref{eq:finite-chaos-expansions}, then the corresponding bound is given by
    \[
    d_2(F_n, Z) \le  \clr_{2,n,N}+ \inf_{m \ge 1} \left[ \sum_{i=3}^5 \clr_{i,n,m,N} \right],
    \]
    where since $N$ is fixed, the dependence on $N$ can be dropped. In analogy, when $F_n$ belongs to a chaos of fixed order (i.e., admits the representation \eqref{eq:seq-multiple-fixed-chaos}), then an even simpler bound can be obtained. This bound will contain a term analogous to $\clr_{2,n,N}$ (with a single summand $p$) and terms analogous to $\clr_{i,n,m,M}, i = 3,4,5$, but without the summations over $N$, the coefficients with regard to N, and without the term corresponding to $\gamma_{n,i,j}^{(l_1,l_2)}$.
    \label{remark-quanti-1}
    \item The bound of \eqref{thm:eq-quantitative-bound} leads to a quantitative CLT under the assumptions of Theorem \ref{thm:abstract-fourth-moment-theorem-infinite-chaos}. To see this, first select $N$ large enough so that $\clr_{1,N} + \clr_{6,N} < \varepsilon/3$, then select $m$ large enough so that $\left(    \clr_{4,n,m,N}+ \clr_{5,n,m,N} \right) < \varepsilon/3$, and finally choose $n$ large enough so that $ \clr_{2,n,N}+ \clr_{3,n,m,N} < \varepsilon/3$, leading to the estimate $d_2(F_{n}, Z) \le \varepsilon$.
\end{enumerate}
\end{remark}

\subsection{The case of vector-valued multiple integrals} \label{subsec:abstract-vector}

We now extend the Fourth-Moment Theorem to the case of a vector of $\clh$-valued random variables, each belonging to a (possibly different) fixed chaos and admitting the representation in \eqref{eq:seq-multiple-fixed-chaos}. For the sake of simplicity, we omit the analogues of items \ref{item2-all:thm:abstract-fourth-moment-theorem} and \ref{item3:thm:abstract-fourth-moment-theorem} of Theorem \ref{thm:abstract-fourth-moment-theorem} for the results in this section. One can also consider a version of the following theorem in which each of the coordinates admits an infinite chaos expansion, as well as a quantitative version of that. We will not include these versions, as they are very technical but would offer little additional value.

We say that a sequence of vectors $\{F_n\}_{n \geq 1}$ on $\clh^K, \; K \ge 2$, belongs to a fixed Wiener chaos if it admits the representation
\begin{equation} \label{eq:vector-fixed-chaos-repres}
    F_n = \left( \cli_{r_1}(f_{n,1}),\dots,\cli_{r_K}(f_{n,K})\right), \quad   f_{n,k} \in \mfh^{\odot r_k} \otimes \clh, \quad k = 1,\dots K, \quad r_1,\dots,r_K \in \NN.
\end{equation}

\begin{theorem}[Infinite-dimensional Fourth-Moment Theorem -- Vector Case]
\label{thm:abstract-fourth-moment-theorem-finite-chaos-vector}
Let $Z$ be a centered, nondegenerate Gaussian random variable on $\clh^K$ with respective covariance operator $\clt_{Z}$, and let $\{ F_n \}_{n\ge 1}$ be an $\clh^K$-valued sequence admitting the representation given in \eqref{eq:vector-fixed-chaos-repres}, with respective covariance $\clt_{F_n}$. Suppose $\| \clt_{F_n} - \clt_Z\|_{\mathcal{S}_1(\clh^K)} \to 0$. Then, as $n \to \infty$, the following statements are equivalent
    \begin{enumerate}[label=(\roman*)]
    \item $F_n \xrightarrow{d} Z$, in the topology of $\clh^K$.
    \label{item1:thm:vector-case}
    \item     \label{item2:thm:vector-case}
For some orthonormal basis $\{e_i\}_{i \in \NN}$ of $\clh$, let $f_{n,k,i} = \langle f_{n,k},e_i \rangle_\clh$ and $Z_{k,i} = \langle Z^{(k)}, e_i \rangle_\clh \in \RR$. Then, for all $k=1,\dots,K$ such that $r_k \ge 2$,
    \begin{equation} \label{eq:conv-moments-vector}
        \EE \left[ (I_{r_k}(f_{n,k,i}))^4 \right] \to \EE \left[  Z_{k,i}^4 \right], \quad    i \in \NN,
    \end{equation}
    as $n \to \infty$.
    \item For some orthonormal basis $\{e_i\}_{i \in \NN}$, let $f_{n,k,i} = \langle f_{n,k},e_i \rangle_\clh$. Then, it holds that for all $i\in \NN$ and $k = 1,\dots, K$ such that $r_k \ge 2$, \label{item3:thm:vector-case}
    \[
    \lim_{n \to \infty} \|  f_{n,k,i}  \otimes_m  f_{n,k,i}  \|_{\mfh^{\otimes 2(r_k - m)}} = 0,
    \]
    for all $m=1,\dots,r_k-1$.
    \end{enumerate}
\end{theorem}

\begin{proof}
We obtain this result by means of Theorem \ref{cor:abstract-fourth-moment-theorem-finite-chaos}. We start by matching the setting of Theorem \ref{cor:abstract-fourth-moment-theorem-finite-chaos} with the one of the present result.

Let $\widetilde{\clh}$ be the Hilbert space $\clh \otimes \RR^K$ and denote by $\{u_1,\dots,u_K\}$ the canonical orthonormal basis of $\RR^K$. Define the unitary map
\[
U:\clh^K\to\widetilde{\clh},\qquad 
U(x^{(1)},\dots,x^{(K)})=\sum_{k=1}^K x^{(k)}\otimes u_k .
\]
Set $\widetilde F_n = U(F_n)$ and $\widetilde Z = U(Z)$. The operator $U$ is an isometric isomorphism since it is unitary, hence continuous and bijective, with $U^{-1}$ also continuous.
Therefore, convergence of $F_n$ to $Z$ is equivalent to convergence of $\widetilde F_n$ to $\widetilde Z$. Take $x\in \clh^K$, then, for any $h\in \widetilde{\clh}$,
\[
\begin{aligned}
((Ux)\otimes(Ux))h
&= \langle h, Ux\rangle_{\widetilde{\clh}} \,Ux 
= \langle U^{*}h, x\rangle_{\clh^K} \,Ux 
= U\bigl(\langle U^{*}h, x\rangle_{\clh^K} \,x\bigr) \\
&= U\bigl((x\otimes x)(U^{*}h)\bigr) 
= \bigl(U(x\otimes x)U^{*}\bigr)h .
\end{aligned}
\]
Then, we can use the identification
\begin{multline}
    \clt_{\widetilde F_n} - \clt_{\widetilde Z} 
    = \EE \left[ \widetilde F_n \otimes \widetilde F_n - \widetilde Z \otimes \widetilde Z    \right] 
    = \EE \left( U \left(  F_n \otimes F_n -  Z \otimes  Z    \right) U^* \right) 
    = U \EE \left[  F_n \otimes F_n -  Z \otimes  Z    \right] U^* ,
\end{multline}
where the expectation and the operation $U$ in the third equality can be exchanged by elementary properties of the Bochner integral, since the latter is a bounded linear operator. Set $T_n \doteq  \clt_{F_n} - \clt_Z$ and $\widetilde T_n \doteq  \clt_{\widetilde F_n} - \clt_{\widetilde Z}$. 
From the previous identity we have $\widetilde T_n =  U T_n U^*$. Then
\begin{align*}
|\widetilde T_n|
&= \bigl((\widetilde T_n)^*\widetilde T_n\bigr)^{1/2} = \bigl(U T_n^* U^* U T_n U^*\bigr)^{1/2} = \bigl( U T_n^*T_n U^*\bigr)^{1/2} = U |T_n| U^*,
\end{align*}
with the last line following from a standard spectral decomposition argument. 
Using the unitary invariance of the trace, we obtain
\[
\|\clt_{\widetilde F_n}-\clt_{\widetilde Z}\|_{\mathcal S_1(\widetilde{\clh})}
= \operatorname{Tr}|\widetilde T_n|
= \operatorname{Tr}|T_n|
= \|\clt_{F_n}-\clt_{Z}\|_{\mathcal S_1(\clh^K)}\to 0.
\]

    \textit{Proof of \ref{item1:thm:vector-case} implies \ref{item2:thm:vector-case}:} The fact that $F_n \xrightarrow{d} Z$ in $\clh^K$ implies that
    \begin{equation*}
        \cli_{r_k} (f_{n,k}) \xrightarrow{d} Z^{(k)}
    \end{equation*}
    in $\clh$ as $n \to \infty$, and, moreover, we have convergence of the respective covariance operators in the trace class norm. Then, \eqref{eq:conv-moments-vector} follows for all $i \in \NN$ and for $k =1,\dots,K$ from the implication \ref{item1:thm:abstract-fourth-moment-theorem} implies \ref{item2:thm:abstract-fourth-moment-theorem} of Theorem \ref{thm:abstract-fourth-moment-theorem}.

    \textit{Proof of \ref{item2:thm:vector-case} implies \ref{item3:thm:vector-case}:} This follows directly from the componentwise application of the implication \ref{item2:thm:abstract-fourth-moment-theorem} to \ref{item5:thm:abstract-fourth-moment-theorem-contr} of Theorem \ref{thm:abstract-fourth-moment-theorem}.

\textit{Proof of \ref{item3:thm:vector-case} implies \ref{item1:thm:vector-case}:} Since $
F_n = \bigl(\cli_{r_1}(f_{n,1}),\dots,\cli_{r_K}(f_{n,K})\bigr)
$, then,
\[
\widetilde F_n 
= U(F_n)
= \sum_{k=1}^K \cli_{r_k}(f_{n,k})\otimes u_k .
\]
Let $N = \max_{1\le k\le K} r_k$. For each $r$ in $\{1,\dots,N\}$ define the kernel
\[
g_{n,r} \in \mfh^{\odot r}\otimes\widetilde{\clh}
\qquad\text{by}\qquad
g_{n,r}=\sum_{k: r_k=r} f_{n,k}\otimes u_k
\]
and set $g_{n,r}=0$ if no index $k$ satisfies $r_k=r$. Denote the multiple Wiener integral on $\widetilde{\clh}$ by $\widetilde{\cli}_r = \cli_r\otimes \Id_{\RR^K} = I_r \otimes \Id_\clh \otimes \Id_{\RR^K}$. Hence
\[
\widetilde{\cli}_r(g_{n,r})
= \sum_{k: r_k=r} \cli_r(f_{n,k})\otimes u_k, \qquad \widetilde F_n = \sum_{r=1}^N \widetilde{\cli}_r(g_{n,r}),
\]
and the latter is a finite chaos expansion on $\widetilde{\clh}$.

\par
Fix the orthonormal basis $\{\widetilde e_{i,k}\}_{i\in\NN,\,k=1,\dots,K}$ of $\widetilde{\clh}$ defined by
\[
\widetilde e_{i,k} = e_i\otimes u_k ,
\]
where $\{e_i\}$ is the basis appearing in assumption \ref{item3:thm:vector-case}. For each $r$, $i$ and $k$ define the kernel
\[
g_{n,r,i,k} = \langle g_{n,r}, \widetilde e_{i,k} \rangle_{\widetilde{\clh}} \in \mfh^{\odot r}.
\]
Using the definition of $g_{n,r}$ and orthonormality of $\{u_k\}$, one obtains
\begin{equation} \label{eq:gnrik-fnki}
    g_{n,r,i,k}
= \sum_{l : r_l = r} \langle f_{n,l} \otimes u_l , e_i \otimes u_k \rangle_{\clh \otimes \RR^K} =
\begin{cases}
\langle f_{n,k}, e_i\rangle_{\clh} = f_{n,k,i} &\text{if } r_k = r,\\[1mm]
0 &\text{if } r_k \ne r .
\end{cases}
\end{equation}
\par
Assumption \ref{item3:thm:vector-case} states that for every $k$ with $r_k \ge 2$, for every $i$ and for every $m$ in $\{1,\dots,r_k-1\}$,
\[
\| f_{n,k,i} \otimes_m f_{n,k,i} \|_{\mfh^{\otimes(2r_k-2m)}} \to 0 .
\]
In view of \eqref{eq:gnrik-fnki}, it follows that for each $r$ in $\{2,\dots,N\}$, each $m$ in $\{1,\dots,r-1\}$, and every pair $(i,k)$,
\[
\| g_{n,r,i,k} \otimes_m g_{n,r,i,k} \|_{\mfh^{\otimes(2r-2m)}} \to 0 .
\]
This is exactly the contraction condition required in Theorem \ref{cor:abstract-fourth-moment-theorem-finite-chaos} applied to the $\widetilde{\clh}$-valued chaos expansion of $\widetilde F_n$.

\par
All the hypotheses of Theorem \ref{cor:abstract-fourth-moment-theorem-finite-chaos} are satisfied for the pair $\widetilde F_n$ and $\widetilde Z$ in the Hilbert space $\widetilde{\clh}$. Hence, $\widetilde F_n \xrightarrow{d} \widetilde Z$ in $\widetilde{\clh}$.
Since $U$ is unitary, this is equivalent to $F_n \xrightarrow{d} Z$ in $\clh^K.$
This completes the proof of \ref{item3:thm:vector-case} implies \ref{item1:thm:vector-case}.
\end{proof}

\begin{remark}
\begin{enumerate}
    \item Theorem \ref{thm:abstract-fourth-moment-theorem-finite-chaos-vector} here with $\clh= \RR$ should be compared with the multivariate Fourth-Moment Theorem of \cite{PeccatiTudor2005}. In particular, our Conditions \ref{item1:thm:vector-case}, \ref{item2:thm:vector-case}, and \ref{item3:thm:vector-case} are analogous to respectively Conditions (iii), (v), and (i) in Theorem 1 of \cite{PeccatiTudor2005}.
\end{enumerate}
\end{remark}

\section{Applications}

Section \ref{subsec:Cremers-Kadelka} presents an application to the celebrated Cremers-Kadelka Theorem. Section \ref{subsec:KRR} concerns a central limit theorem for the Kernel Ridge regression estimator. Finally Section \ref{subsec:spde-general} presents two applications of our tools to the stochastic heat equation.

\subsection{Quantitative Cremers-Kadelka Theorem}
\label{subsec:Cremers-Kadelka}

A central topic in the theory of stochastic processes is identifying sufficient conditions for the convergence in distribution of a sequence of stochastic processes to a limiting one, in the topology induced by viewing the process as an element of a function space. Perhaps the most important procedure for obtaining such a result was proposed by Prokhorov in his fundamental paper \cite{Prokhorov:1956:CRP} and entails that, for the Banach spaces $C([0,1])$ and $D([0,1])$, convergence in finite dimensional distribution, combined with tightness in the respective functional space, suffice to show the desired functional convergence. This was introduced (slightly adjusted) in the context of $L^p$ spaces by \cite{Grinblat:1976:LT}, and then extended to stochastic processes with paths in general Lusin spaces in \cite{CremersKadelkaLusin84}. In the context of $L^p(Y, \mathcal{A},\mu)$ spaces, where $(Y, \mathcal{A},\mu)$ is a $\sigma$-finite measure space, the condition for tightness was weakened in \cite{cremers1986weak}, who identified easily verifiable conditions for the convergence of a sequence of stochastic processes with sample paths in $L^p$ spaces.

In this section, we upgrade the important result of Cremers-Kadelka to its quantitative version, when the limiting distribution is a Gaussian random variable in $\clh = L^2(Y) \doteq L^2(Y, \mathcal{A},\mu)$ and the pre-limit can be expressed as a fixed Wiener chaos. Assuming the conditions that were provided therein, we prove that the conditions of our Theorem \ref{thm:abstract-fourth-moment-theorem} are automatically satisfied, thereby yielding a quantitative functional central limit theorem through an application of Theorem \ref{thm:quantitative-CLT}. In its original form, the (qualitative) theorem of Cremers-Kadelka is stated as Theorem 2 in \cite{cremers1986weak}.

Our assumptions are stronger in the following way: (i) the fluctuations in the Hilbert space $L^2$ have a fixed chaos expansion, and (ii) the limiting random variable must necessarily be a nondegenerate Gaussian random variable on $L^2$. On the other hand, the CLT obtained is automatically quantitative.

\begin{theorem}[Quantitative Cremers-Kadelka Theorem] \label{thm:KandC}
Suppose $\xi_n, \xi \in L^2(Y)$ and $\xi_n$ admits a fixed chaos representation
$\xi_n = \mathcal{I}_p(f_{p,n})$ with $f_{p,n} \in \mfh^{\odot p} \otimes L^2(Y)$, and that $\xi$ is a nondegenerate Gaussian random variable. Suppose further that the finite-dimensional distributions converge, i.e.,
    \begin{equation}
        (\xi_n(s_1), \dots, \xi_n(s_q))^\top \xrightarrow{d}
        (\xi(s_1), \dots, \xi(s_q))^\top,
    \end{equation}
    in $\RR^{q}$ for $\mu$-almost all $s_1,\dots,s_q \in Y$ and for all $q \in \NN$.
Assume that either of the two following conditions are satisfied
\begin{enumerate}[label=(\roman*)]
    \item $\{ | \xi_n |^2, n \in \NN \}$ is $\PP \otimes \mu$ uniformly integrable.
    \label{item:kadelka1}
    \item $\limsup_{n \to \infty} \EE \int_Y | \xi_n(s) |^2 \mu(ds) \le \EE \int_Y | \xi(s) |^2 \mu(ds)< \infty$.
    \label{item:kadelka2}
\end{enumerate}
Then, $\xi_n \xrightarrow{d} \xi$ in  $L^2(Y)$. Moreover, the bounds of Theorem \ref{thm:quantitative-CLT} (see also Remark \ref{remark-quanti} \eqref{remark-quanti-1} for the case of fixed chaos) are in place.  
\end{theorem}

\begin{proof}
Given the assumptions of Theorem \ref{thm:KandC} are satisfied, the (qualitative) Cremers-Kadelka Theorem can be used to infer the convergence in distribution
\begin{equation} \label{eq:weak-C-K}
    \xi_n \xrightarrow{d} \xi, \quad \text{in the topology of } L^2(Y).
\end{equation}
Note that once convergence in distribution is given, Assumptions \ref{item:kadelka1} and \ref{item:kadelka2} are equivalent; see Theorems 3.5 and 3.6 in \cite{billingsley1999convergence}.
Furthermore, Theorem \ref{thm:abstract-fourth-moment-theorem} provides equivalent conditions for weak convergence given convergence of the covariance operators in trace class norm. Therefore, it suffices to show $\| \clt_{\xi_n} - \clt_{\xi} \|_{\mathcal{S}_1(L^2(Y))} \to 0$ to employ these equivalent conditions. We infer convergence in trace class norm by applying Lemma \ref{lem:trace-condition}.

For Condition \ref{lem:trace-condition-item1} of Lemma \ref{lem:trace-condition}, we have, from the a.s. convergence of the finite dimensional distributions, Condition \ref{item:kadelka2} and Fatou's lemma, 
\[
\EE\|\xi\|^2_{L^2(Y)} \le \lim \inf_n \EE\|\xi_n\|^2_{L^2(Y)} \le \lim \sup_n \EE\|\xi_n\|^2_{L^2(Y)} \le \EE\|\xi\|^2_{L^2(Y)}.
\]
Since $\tr(\clt_{\xi_n}) = \EE\|\xi_n\|^2_{L^2(Y)}$ and $\tr(\clt_{\xi}) = \EE\|\xi\|^2_{L^2(Y)}$, 
the convergence verifies Condition 
\ref{lem:trace-condition-item1} of Lemma \ref{lem:trace-condition}.

We now turn to verifying Condition \ref{lem:trace-condition-item2} of Lemma \ref{lem:trace-condition}, i.e., the weak convergence of the covariance operators in the trace class topology. We prove that for all $A \in \mathcal{L}(L^2(Y))$, as $n \to \infty$,
\begin{align} \label{eq:abc}
    \tr(A \clt_{\xi_n} ) = 
    \EE \langle A \xi_n, \xi_n \rangle_{L^2(Y)} 
    \to 
    \EE \langle A \xi, \xi \rangle_{L^2(Y)}
    =
    \tr(A \clt_{\xi} ).
    \end{align}
Since $\xi_n \xrightarrow{d} \xi$ by Theorem \ref{thm:KandC}, and upon noticing that the map
$x \mapsto \langle A x, x \rangle_{L^2(Y)}$ from $L^2(Y)$ to $\RR$ is continuous in the norm topology of $L^2(Y)$, the continuous mapping theorem says that,
\begin{equation} \label{appl2-eq2}
    \langle A \xi_n, \xi_n \rangle_{L^2(Y)} \xrightarrow{d}
    \langle A \xi, \xi \rangle_{L^2(Y)}.
\end{equation}
To conclude \eqref{eq:abc} from \eqref{appl2-eq2}, we also need to show that the family $\{ \langle A \xi_n, \xi_n \rangle_{L^2(Y)} \}_{n \in \NN}$ is uniformly integrable. To see this, note that for any $\varepsilon \in (0,1)$,
\begin{equation} \label{eq:abc-2}
\begin{aligned}
    \EE |\langle A \xi_n, \xi_n \rangle|^{1+\varepsilon}_{L^2(Y)}
    &
    \leq 
    \left(\EE |\langle A \xi_n, \xi_n \rangle|_{L^2(Y)}^{2}\right)^{\frac{1+\varepsilon}{2}}
    \leq
    \left( \| A \|_{\op}^2
    \EE \| \xi_n \|_{L^2(Y)}^4 \right)^{\frac{1+\varepsilon}{2}},
\end{aligned}
\end{equation}
where we used Hölder's inequality. The last inequality follows since $A\in\mathcal{L}(\clh)$.
To continue bounding the final expression in \eqref{eq:abc-2}, we note that the $\lim \sup$ assumption \ref{item:kadelka2} allows us to choose an $N \in \NN$ so that, for all $n \ge N$, 
\begin{equation}
    \EE \| \xi_n \|_{L^2(Y)}^2 
     \le 2 \EE \| \xi \|_{L^2(Y)}^2.
\end{equation}
Since $\xi$ is nondegenerate, it is possible to select a constant $M$ as
\begin{equation} \label{eq:M-def}
M \doteq \max_{n = 1,\dots N-1} \left\{2, \frac{\EE \| \xi_n \|_{L^2(Y)}^2}{ \EE \| \xi \|_{L^2(Y)}^2  }\right\} < \infty,
\end{equation}
so that, for all $n \in \NN$,
\begin{equation} \label{eq:M-bound}
    \EE \| \xi_n \|_{L^2(Y)}^2 
     \le M \EE \| \xi \|_{L^2(Y)}^2 .
\end{equation}
Finally, using hypercontractivity of the $p$th order Wiener chaos on $\xi_n$ (Theorem 2.7.2 in \cite{nourdin2012normal}), there is a constant $c>0$ such that
\begin{align}
    \EE \| \xi_n \|_{L^2(Y)}^4
    \leq
    c \ (\EE \| \xi_n \|_{L^2(Y)}^2)^2
    \leq
    c \ (M \ \EE \| \xi \|_{L^2(Y)}^2)^2,
\end{align}
where we used \eqref{eq:M-bound} with $M$ as in \eqref{eq:M-def}. 
Then \eqref{appl2-eq2}, together with uniform integrability and Theorem 3.5 in \cite{billingsley1999convergence}, implies \eqref{eq:abc}.
\end{proof}

\subsection{Central limit theorem for the Kernel Ridge Regression estimator}
\label{subsec:KRR}

In this section, we prove a functional CLT for the Kernel Ridge Regression (KRR) estimator, viewed as a random element in its ambient reproducing kernel Hilbert space (RKHS). The RKHS framework provides a principled way to construct nonlinear estimators for complex systems, including dynamical systems and general regression problems, by embedding data into a high-dimensional feature space implicitly defined by a positive definite kernel. This connects tools from functional analysis and statistical learning theory, and underlies nonlinear methods such as support vector machines and kernel principal component analysis; see \cite{steinwart2008support} for a comprehensive introduction.

We consider a fixed-design regression model, i.e., a setting in which the covariates
\(x_1,\dots,x_n\) are deterministic and the responses satisfy
\begin{equation} \label{eq:reg-model}
Y_i = f_0(x_i) + \varepsilon_i, \qquad i=1,\dots,n,
\end{equation}
where the noise sequence \(\{\varepsilon_i\}_{i\ge 1}\) admits an $\RR$-valued Wiener--It\^o chaos representation of fixed order \(p\ge 2\). The objective here is to estimate $f_0$ as an element in some RKHS. More precisely, assume that
\begin{equation} \label{eq:noise-ass}
\varepsilon_i = I_p(g_i), \; g_i \in \mathfrak{H}^{\odot p}, \qquad 
\mathbb{E}[\varepsilon_i]=0,\quad \mathbb{E}[\varepsilon_i^2]=\sigma^2,
\end{equation}
and the collection \(\{g_i\}_{i\ge 1}\) lies in pairwise orthogonal subspaces of \(\mathfrak{H}^{\odot p}\) and suppose that, in addition, $g_i \otimes_r g_j =0$ for $i \neq j$ and $r=1,\dots,p-1$.
Consequently, the chaos variables $I_p(g_i)$ and $I_p(g_j)$ are independent for $i\neq j$, 
giving non-Gaussian but independent errors.

To estimate the function \(f_0\) in \eqref{eq:reg-model}, we employ a regularized least-squares estimator, also known as KRR. Let \((\mathcal{H},\langle\cdot,\cdot\rangle_{\mathcal{H}})\) be the RKHS associated with a bounded positive definite kernel \(k\) on \(\mathcal{X}\subset\mathbb{R}\), and let \(K_x(\cdot)=k(\cdot,x)\) denote the corresponding reproducing kernels. For \(\lambda>0\), the KRR estimator is defined as the solution of
\begin{equation} \label{eq:KRR-OF}
\widehat f_{n,\lambda}
 \doteq
\arg\min_{f\in\mathcal{H}}
\Big\{\tfrac1n\sum_{i=1}^n (Y_i-f(x_i))^2+\lambda \|f\|_{\mathcal{H}}^2\Big\}.
\end{equation}
As given in, e.g., \cite{smale2005shannon} or Theorem 1 in \cite{scholkopf2001generalized}, by the representer theorem, the solution to \eqref{eq:KRR-OF} has the explicit form
\begin{equation} \label{eq:KRR-explicit}
\widehat f_{n,\lambda}
    = \frac{1}{n}(\Gamma_n+\lambda I)^{-1} S_n^{*} Y.
\end{equation}
Here \(Y=(Y_1,\dots,Y_n)^\top \), \(S_n:\mathcal{H}\to\mathbb{R}^n\) is the sampling operator given by \(S_n f=(f(x_1),\dots,f(x_n))^\top\), \(S_n^*:\mathbb{R}^n\to\mathcal{H}\) its adjoint, and \(\Gamma_n=\tfrac{1}{n}S_n^*S_n\) the empirical covariance operator. Similarly, we introduce \(\varepsilon=(\varepsilon_1,\dots,\varepsilon_n)^\top\) such that \eqref{eq:reg-model} can be equivalently written as $Y = (f_0(x_1),\dots,f_0(x_n))^\top + \varepsilon$.

We show that, under mild conditions on the design, the estimator \(\widehat f_{n,\lambda}\) satisfies a central limit theorem in the Hilbert space \(\mathcal{H}\). Recent work has established asymptotic normality for linear functionals of KRR estimators, both under random and fixed designs; see \cite{tuo2024asymptotic}. These results, however, yield only finite-dimensional CLTs, describing the asymptotic behavior of quantities of the form \(\langle \widehat f_{n,\lambda}, g\rangle_{\mathcal{H}}\) for fixed \(g\in\mathcal{H}\). To the best of our knowledge, no functional central limit theorem, i.e., convergence in distribution of \(\widehat f_{n,\lambda}\) as a random element of \(\mathcal{H}\), has been established even in the fixed-design setting. 

In this work we prove such a functional CLT for \(\widehat f_{n,\lambda}\) under deterministic design and noise terms admitting a fixed Wiener--Itô chaos expansion, using Theorem~\ref{thm:abstract-fourth-moment-theorem}.

For fixed $\lambda > 0$, define
\begin{equation} \label{eq:KRR-fn}
    F_n \doteq \sqrt n ( \widehat f_{n,\lambda}-A_n\Gamma_n f_0 )
= A_n \frac1{\sqrt n} S_n^*\varepsilon \in \clh , \quad A_n \doteq (\Gamma_n+\lambda I)^{-1},
\end{equation}
and the population analog of \eqref{eq:KRR-explicit}
\[
f_{\lambda} \doteq ( \Gamma+\lambda I )^{-1}\Gamma f_0.
\]

\begin{proposition}\label{lem:krr-clt-fixed-lambda}
Suppose $\sup_{x\in\mathcal{X}}\|K_x\|_{\clh}^2 \le C_k<\infty$ and that $\|\Gamma_n-\Gamma\|_{\mathcal{S}_1(\clh)}\to 0$ as $n\to\infty$, for some nondegenerate trace class operator $\Gamma$ on $\clh$. Then, letting $\{F_n\}_n$ be as in \eqref{eq:KRR-fn},
\[
F_n \xrightarrow{d} Z \quad \text{in } \clh,
\]
where $Z$ is centered Gaussian random variable in $\clh$ with covariance operator 
\[
\clt_Z  =  \sigma^2 ( \Gamma+\lambda I )^{-1} \Gamma ( \Gamma+\lambda I )^{-1}.
\]

If, in addition, $\sqrt n \| A_n\Gamma_n f_0 - f_{\lambda} \|_{\clh}\longrightarrow 0$, then $\sqrt n \big(\widehat f_{n,\lambda}-f_{\lambda}\big)\xrightarrow{d} Z$ in $\clh$.
\end{proposition}

\begin{proof}
We first write the KRR estimator in an explicit representation as multiple-integral in the form \eqref{eq:seq-multiple-fixed-chaos}.
Recalling \eqref{eq:KRR-explicit}, we have
\begin{equation} \label{eq:An-fnl}
\widehat f_{n,\lambda}-A_n\Gamma_n f_0
= A_n  \frac{1}{n} S_n^*(Y-S_n f_0)
= A_n  \frac{1}{n} S_n^*\varepsilon .
\end{equation}
Note also that by the representer Theorem (Theorem 1 in \cite{scholkopf2001generalized}), for any $f \in \clh$ and $c=(c_1,\dots,c_n)^\top \in \RR^n$,
\begin{align*}
\langle f , S_n^*c\rangle_{\clh}
= \sum_{i=1}^n f(x_i)c_i
= \sum_{i=1}^n \langle f , K_{x_i}\rangle_{\mathcal H} 
c_i 
= \Big\langle f , \sum_{i=1}^n c_i K_{x_i} \Big\rangle_{\mathcal H}, \; \text{implying that } S^* c =  \sum_{i=1}^n c_i K_{x_i}.
\end{align*}
Therefore, in view of \eqref{eq:An-fnl} and by the linearity of multiple integrals, we can recast \eqref{eq:KRR-fn} as
\begin{equation}
F_n = A_n  \frac1{\sqrt n}  S_n^*\varepsilon
= \sum_{i=1}^n a_{n,i}  I_p(g_i) = \cli_{p}(f_n) ,
\qquad 
a_{n,i}\doteq A_n\frac{K_{x_i}}{\sqrt n}\in\clh, \; f_n \doteq \sum_{i=1}^n g_i \otimes a_{n,i}\in\mfh^{\odot p}\otimes\clh .
\end{equation}
To apply Theorem \ref{thm:abstract-fourth-moment-theorem}, we show convergence of the covariance operators in trace class norm. By \eqref{eq:noise-ass} and the uncorrelatedness of the $\varepsilon_i$'s, it holds that $\clt_{F_n}  =  \sigma^2 A_n\Gamma_n A_n $. To see this, choose $u,v \in \clh$, and 
\begin{multline}
    \langle \clt_{F_n} u , v \rangle_\clh 
    = 
    \EE [\langle A_n  \frac1{\sqrt n}  S_n^*\varepsilon, u \rangle_\clh \langle A_n  \frac1{\sqrt n}  S_n^*\varepsilon, v \rangle_\clh ] 
    = \frac{1}{n} \EE [ \langle \varepsilon, S_n A_n u \rangle_{\RR^n} \langle \varepsilon, S_n A_n  v \rangle_{\RR^n} ] \\
    = \frac{1}{n} (S_n A_n u)^\top 
    \EE [\varepsilon \varepsilon^\top] S_n A_n  v = \frac{1}{n} \sigma^2 \langle S_n A_n u, S_n A_n v \rangle_{\RR^n} = \langle \sigma^2 A_n \Gamma_n A_n u, v \rangle_\clh.
\end{multline}

We claim that
\begin{equation}\label{eq:cov-conv-op}
\|\clt_{F_n}-\clt_Z\|_{\mathcal{S}_1(\clh)} \longrightarrow 0.
\end{equation}
Indeed, write $A_0(\Gamma)=(\Gamma+\lambda I)^{-1}$.  
We decompose
\begin{equation}\label{eq:An-decomp}
A_n\Gamma_nA_n - A_0(\Gamma)\Gamma A_0(\Gamma)
= (A_n-A_0(\Gamma))\Gamma_nA_n 
+ A_0(\Gamma)(\Gamma_n-\Gamma)A_n 
+ A_0(\Gamma)\Gamma(A_n-A_0(\Gamma)).
\end{equation}
Taking the trace-class norm and using the ideal property $\|XYZ\|_{\mathcal{S}_1(\clh)}\le \|X\|_{\op}\|Y\|_{\mathcal{S}_1(\clh)}\|Z\|_{\op}$, we obtain
\begin{equation}\label{eq:cov-bound}
\begin{aligned}
    &\|A_n\Gamma_nA_n - A_0(\Gamma)\Gamma A_0(\Gamma)\|_{\mathcal{S}_1(\clh)}
\\&\le 
\left(\sup_n\|\Gamma_n\|_{\mathcal S_1(\clh)} \sup_n \|A_n\|_{\op}  
+ \|\Gamma \|_{\mathcal S_1(\clh)} \|A_0(\Gamma)\|_{\op} \right) \|A_n-A_0(\Gamma)\|_{\op}  
\\&\hspace{2cm}+  \sup_n \|A_n\|_{\op} \|A_0(\Gamma)\|_{\op}  \|\Gamma_n-\Gamma\|_{\mathcal{S}_1(\clh)}.
\end{aligned}
\end{equation}
The constants $\sup_n \|A_n\|_{\op}$, $\|A_0(\Gamma)\|_{\op}$, and $\sup_n\|\Gamma_n\|_{\mathcal S_1(\clh)}$ are finite because $\|A_n\|_{\op}\le \lambda^{-1}$, $\|A_0(\Gamma)\|_{\op}\le \lambda^{-1}$, and each $\Gamma_n$ is trace class with $\|\Gamma_n\|_{\mathcal S_1(\clh)} = \tr(\Gamma_n)
= \frac{1}{n}\sum_{i=1}^n \|K_{x_i}\|^2_{\clh} \le C_k$, so $\sup_n \|\Gamma_n\|_{\mathcal S_1(\clh)}<\infty$.
To estimate $\|A_n-A_0(\Gamma)\|_{\op}$, we use the resolvent identity
\begin{equation}\label{eq:res-id}
A_n-A_0(\Gamma)
= A_n \big( \Gamma-\Gamma_n \big)A_0(\Gamma).
\end{equation}
Taking operator norms and using $\|A_n\|_{\op}\le \lambda^{-1}$ and $\|A_0(\Gamma)\|_{\op}\le \lambda^{-1}$ gives
\begin{equation}\label{eq:AnR0-diff}
\|A_n-A_0(\Gamma)\|_{\op}
\le \lambda^{-2} \|\Gamma_n-\Gamma\|_{\mathcal{S}_1(\clh)}.
\end{equation}
Combining \eqref{eq:cov-bound} and \eqref{eq:AnR0-diff}, we infer that there is a constant $C>0$ such that
\begin{equation}\label{eq:cov-conv-final}
\|A_n\Gamma_nA_n - A_0(\Gamma)\Gamma A_0(\Gamma)\|_{\mathcal{S}_1(\clh)}
\le 
C \|\Gamma_n-\Gamma\|_{\mathcal{S}_1(\clh)}
\longrightarrow 0,
\end{equation}
as $n\to\infty$.  

\par
To verify the contraction condition \ref{item5:thm:abstract-fourth-moment-theorem-contr} in Theorem \ref{thm:abstract-fourth-moment-theorem}, 
fix an orthonormal basis $\{e_j\}_{j\in\NN}$ of $\clh$ and define 
\begin{equation}\label{eq:def-fnj}
f_{n,j} \doteq \langle f_n,e_j\rangle_{\clh}
= \sum_{i=1}^n \alpha_{n,j,i} g_i,
\qquad
\alpha_{n,j,i}\doteq \langle A_n K_{x_i}/\sqrt n , e_j\rangle_{\clh}.
\end{equation}
Then, for $r \in \{1,\dots,p-1\}$, we consider the squared norm of this contraction
\begin{align}
\big\| f_{n,j} \otimes_r f_{n,j} \big\|^2_{\mfh^{\otimes (2p - 2r)}}
    &= \Big\langle \sum_{i=1}^n \alpha_{n,j,i}^2 \, (g_i \otimes_r g_i), \sum_{\ell=1}^n \alpha_{n,j,\ell}^2 \, (g_\ell \otimes_r g_\ell)
   \Big\rangle_{\mfh^{\otimes (2p - 2r)}} \nonumber
   \\&= 
   \sum_{i=1}^n \sum_{\ell=1}^n
   \alpha_{n,j,i}^2 \alpha_{n,j,\ell}^2 \,
   \big\langle g_i \otimes_r g_i, g_\ell \otimes_r g_\ell \big\rangle_{\mfh^{\otimes (2p - 2r)}}
   \\&= 
   \sum_{i=1}^n \sum_{\ell=1}^n
   \alpha_{n,j,i}^2 \alpha_{n,j,\ell}^2 \,
   \big\langle g_i \otimes_{p-r} g_\ell, g_i \otimes_{p-r} g_\ell \big\rangle_{\mfh^{\otimes 2r}}
   \\&=
   \sum_{i=1}^n \alpha_{n,j,i}^4 \, \big\| g_i \otimes_r g_i \big\|^2_{\mfh^{\otimes 2r}}.
   \label{eq:fnj-contraction-norm-general} \\
   &\le (\sigma^2/p!)^2 \sum_{i=1}^n \alpha_{n,j,i}^4,
\end{align}
where the last line follows because $\|g_i\otimes_r g_i\|_{\mfh^{\otimes 2r}} \le \|g_i\|_{\mfh^{\odot p}}^2 = \sigma^2/p!$, since $\Var(I_p(g_i)) = p!\|g_i\|_{\mfh^{\odot p}}^2 = \sigma^2$.
Using $\|A_n\|_{\op}\le \lambda^{-1}$ and $\|K_{x_i}\|_{\mathcal{H}}^2\le C_k$, we obtain
\[
\max_{1\le i\le n} |\alpha_{n,j,i}|
\le \frac{\sqrt{C_k}}{\sqrt{n}\,\lambda},
\qquad
\sum_{i=1}^n \alpha_{n,j,i}^4
\le \sum_{i=1}^n \Big(\max_{1\le i\le n} |\alpha_{n,j,i}|\Big)^4
\le \frac{C_k^2}{n\,\lambda^4}.
\]
It follows that
\[
\| f_{n,j}\otimes_r f_{n,j} \|_{\mfh^{\otimes (2p - 2r)}} \longrightarrow 0
\quad \text{for all } j\in\NN \text{ and } r\in\{1,\dots,p-1\}.
\]
Thus item \ref{item5:thm:abstract-fourth-moment-theorem-contr} of Theorem \ref{thm:abstract-fourth-moment-theorem} holds for the chosen basis.
The covariance operators converge in trace class norm and the contraction conditions hold, hence Theorem \ref{thm:abstract-fourth-moment-theorem} gives $F_n\xrightarrow{d} Z$ in $\clh$.

\par
Finally, write
\[
\sqrt n \big(\widehat f_{n,\lambda}-f_{\lambda}\big)
= F_n \ +\ \sqrt n \big(A_n\Gamma_n f_0 - f_{\lambda}\big) .
\]
Then the remainder vanishes in $\clh$ under the stated condition. Slutsky then yields $\sqrt n \big(\widehat f_{n,\lambda}-f_{\lambda}\big)\xrightarrow{d} Z$ in $\clh$.
\end{proof}

\subsection{Approximation and ergodicity of the stochastic heat equation} \label{subsec:spde-general}

In this section, we consider the 1-dimensional stochastic heat equation, viewed as an evolution equation on 
$\clh = L^2([0,1],dx)$ with homogeneous Dirichlet boundary conditions. 
First, we study the spectral Galerkin approximation of its solutions, and the associated weak errors. 
Obtaining rates of convergence for weak errors is a well-known and challenging problem; see, e.g., \cite{ConJenKur19}. 
Here, we derive such rates for test functions $\phi$ belonging to the class $\clc_b^2(\clh)$, and for every approximation order $n$. Then, we study the problem of approximating the invariant measure when initiating the SPDE from an initial condition admitting a chaos expansion.

Although the framework considered here is elementary, we believe that several generalizations are possible. 
For instance, with regard to the results of Subsection \ref{subsec:spde}, one could consider joint approximations in time and space (e.g., Euler--Maruyama in time and Galerkin in space), 
Gaussian, nonzero, initial conditions, or even other semilinear SPDEs, as long as their law remains Gaussian on $\clh$. The results of Section \ref{subsubsec:spde-invariant} can be used to obtain explicit $d_2-$mixing rates for the SHE. Such pursuits are, however, outside the scope of the present article.

The framework is as follows. Define the Laplace operator
\[
A u \doteq  \frac{d^2u}{dx^2}, 
\qquad D(A) \doteq  H^2(0,1) \cap H_0^1(0,1),
\]
where, for $p \in \NN$, $H^p(0,1) = \bigl\{  u \in L^2(0,1) : D^k u \in L^2(0,1) \text{ for } 0 \le k \le p  \bigr\}$ with $D^k u$ denoting the weak $k-$th derivative of $u$, and $H_0^1(0,1) \doteq \{u \in H^1(0,1): u(0) = u(1) = 0\}$.
Then $A$ is a self-adjoint, negative definite, unbounded operator on $\clh$, generating the strongly continuous semigroup $S(t) = e^{tA}, t \ge 0$.
We consider the stochastic evolution equation
\begin{equation} 
du(t) = A u(t) dt + Q^{1/2} dW_t, 
\qquad u(0) = u_0,
\label{eq:spde}
\end{equation}
where $W_t$ is a cylindrical Wiener process on $\clh$ and $Q \in \cls_1(\clh)$ is a symmetric, nonnegative, trace-class covariance operator; see Section 6 in \cite{da2002second} and Examples 10.17 and 10.20 in \cite{Lord_Powell_Shardlow_2014}, but note the slightly different terms arising in the latter due to considering $\clh = L^2(0,\pi)$. In the formulation \eqref{eq:spde}, we assume that $u_0$ and $W$ are independent.

The Dirichlet Laplacian admits the orthonormal eigenbasis and eigenvalues
\[
e_k(x) \doteq \sqrt{2}\sin(k\pi x), \quad A e_k \doteq -\lambda_k e_k, \quad \lambda_k \doteq (k \pi)^2
, \quad k\ge1,
\]
and we assume that the operator $Q$ admits the same eigenbasis, i.e., $Q e_k = q_k e_k$, where $q_k  > 0$ for all $k$. Since $Q \in \cls_1(\clh),$ it follows that  $\sum_{k=1}^\infty q_k < \infty$ and since $
\int_0^t \| S(t-s) Q^{1/2}\|_{\cls_2(\clh)}^2 ds < \infty$, the stochastic integral
is well-defined, giving rise to the mild solution of \eqref{eq:spde}
\begin{equation} \label{eq:mild-sol-def}
u(t) = S(t) u_0 +  \int_0^t S(t-s) Q^{1/2} dW_s, \qquad t \ge 0.
\end{equation}
When $\EE u_0 = 0$, the process $\{u(t)\}_{t\ge0}$ in \eqref{eq:mild-sol-def} is a centered random variable in $\clh$. When, in addition, $u_0 = 0$, the process is Gaussian.

Expanding $u(t) = \sum_{k=1}^\infty u_k(t) e_k$, where $u_k(t) \doteq \langle u(t), e_k\rangle_\clh$, we obtain the family of Ornstein--Uhlenbeck ordinary stochastic differential equations
\[
du_k(t) = -\lambda_k u_k(t) dt + \sqrt{q_k} d\beta_k(t), \quad u_k(0) = \langle u_0, e_k \rangle_\clh,
\]
where $\{\beta_k\}_{k \in \NN}$ are independent standard Brownian motions. It is a standard fact (e.g., Example 6.2.11 in \cite{da2002second}) that, if $\sum_{k\ge1} q_k/\lambda_k < \infty$, then $\clt_{u(t)} \in \cls_1(\clh)$, and, moreover
\begin{equation} 
\clt_{u(t)} = S(t) \clt_{u_0} S(t)^* + \int_0^t S(t-s) Q S(t-s)^* ds = S(t) \clt_{u_0} S(t)^* + \sum_{k=1}^\infty \frac{q_k}{2\lambda_k}\big(1 - e^{-2\lambda_k t}\big)  e_k \otimes e_k.
\label{eq:covariance}
\end{equation}

\subsubsection{Weak convergence rates of spectral Galerkin approximations}
\label{subsec:spde}

Here we consider the evolution equation $\eqref{eq:mild-sol-def}$ with initial conditions $u_0 = 0$. Recall the notation $E_n = \mathrm{span}\{e_1, \dots, e_n\}$ and denote again by $P_n: \clh \to E_n$ the orthogonal projection operator defined above \eqref{eq:summands3-trace}. The spectral Galerkin approximation of \eqref{eq:spde} (see, (10.56) of \cite{Lord_Powell_Shardlow_2014}) is the solution to the $E_n$-valued evolution equation:
\begin{equation}
du_n(t) = A_n u_n(t) dt + P_n Q^{1/2} dW_t, 
\qquad u_n(0)=0,
\end{equation}
where $A_n \doteq P_n A|_{E_n}$. It admits the mild solution given by 
\[
u_n(t) = \int_0^t e^{(t-s)A_n} P_n Q^{1/2} dW_s
        = \sum_{k=1}^n \sqrt{q_k}\int_0^t e^{-\lambda_k (t-s)} d\beta_k(s)  e_k.
\]
For all $n$, the process $\{u_n(t)\}_{t \ge 0}$ is also Gaussian in $E_n$ with covariance operator
\begin{equation} \label{eq:covar-oper-galerking}
\clt_{u_n(t)} = P_n \clt_{u(t)} P_n
        = \sum_{k=1}^n \frac{q_k}{2\lambda_k}\big(1 - e^{-2\lambda_k t}\big)  e_k \otimes e_k.
\end{equation}
We view the law of $\{u_n(t)\}$ as the centered, degenerate Gaussian measure on $\clh$ with covariance operator \eqref{eq:covar-oper-galerking}. Thus, the spectral Galerkin method corresponds to truncating the infinite-dimensional Gaussian field to its first $n$ spectral modes.

Fix $\phi \in \clc_b^2(\clh)$.
The \emph{weak error} of the spectral Galerkin approximation at the terminal time $T$ is defined as
\begin{equation} \label{eq:weak-error-def}
e^{w,\phi}_n(T)
  = \big|\EE[\phi(u(T))] - \EE[\phi(u_n(T))]\big|.
\end{equation}

In the result below, we translate our quantitative estimates for the weak approximation of Gaussian measures to a general result for the weak error of the Galerkin approximation at the terminal time $T$.

\begin{proposition}
Let $\phi \in \clc_b^2(\clh)$, such that $\|\phi\|_{\clc_b^2(\clh)} \le 1$. Then,
\begin{equation}
    e^{w,\phi}_n(T) \le \frac{1}{4} \sum_{k=n+1}^\infty \frac{q_k}{\lambda_k}\big(1 - e^{-2\lambda_k T}\big).
\end{equation}
\end{proposition}

\begin{proof}
    Since $u(T)$ is nondegenerate, it follows from Corollary \ref{cor1} that
    \begin{equation}
        e^{w,\phi}_n (T) \le d_2(u(T), u_n(T)) \le \frac{1}{2}
    \left\lVert \clt_{u(T)}  - \clt_{u_n(T)} \right\rVert_{\mathcal{S}_1(\clh)}.
    \end{equation}
    Recalling the form of $\clt_{u(T)}$ and $\clt_{u_n(T)}$ in \eqref{eq:covariance} and \eqref{eq:covar-oper-galerking}, we infer that
    \begin{equation}
        \clt_{u(T)} - \clt_{u_n(T)}  = \sum_{k=n+1}^\infty \frac{q_k}{2\lambda_k}\big(1 - e^{-2\lambda_k T}\big)  e_k \otimes e_k,
    \end{equation}
    which is a non-negative, self-adjoint operator. Hence, 
    \begin{equation}
        \left\lVert \clt_{u(T)}  - \clt_{u_n(T)} \right\rVert_{\mathcal{S}_1(\clh)} = \sum_{k=n+1}^\infty \frac{q_k}{2\lambda_k}\big(1 - e^{-2\lambda_k T}\big),
    \end{equation}
    which concludes the proof.
\end{proof}

\subsubsection{Approximation of the invariant measure}
\label{subsubsec:spde-invariant}

Consider again the solution to the stochastic heat equation given in its mild formulation in \eqref{eq:mild-sol-def}, with an arbitrary initial condition $u_0$ in $L^2(\Om:\clh)$ admitting the centered Wiener chaos expansion
\begin{equation} \label{eq:u0-init-cond}
    u_0 \doteq \sum_{r=1}^\infty \cli_r(f_{r}), \quad f_{r} = \sum_{i=1}^\infty f_{r,i} \otimes e_i \in \mfh^{\odot r} \otimes \clh.
\end{equation}
Recall that $u_0$ is independent of the cylindrical Wiener process $W$. Then,
\begin{equation}  \label{eq:linear-part-I1}
    \int_0^t S(t-s) Q^{1/2} dW_s = \cli_1(h_t), \quad h_t = \sum_{i=1}^\infty h_{t,i} \otimes e_i \in \mfh \otimes \clh,
\end{equation}
since it is an $\clh$-valued Gaussian process. We note that, for notational consistency, we write here $t$ as the time variable, but to match the setting of the rest of our paper we evaluate it at $t \in \NN$. Then by combining \eqref{eq:mild-sol-def}, \eqref{eq:u0-init-cond}, and \eqref{eq:linear-part-I1},
\begin{equation} \label{eq:ut-wiener-chaos}
        u(t) =  \sum_{r=1}^\infty S(t)  \cli_r(f_r) + \cli_1( h_t) = \cli_1(h_t + \widetilde f_{t,1}) + \sum_{r=2}^\infty \cli_r (\widetilde f_{t,r}),
\end{equation}
where the new kernels are given by
\begin{equation} \label{eq:ut-kernels-tilde}
    \widetilde f_{t,r} \doteq  \sum_{i=1}^\infty e^{-\lambda_i t} f_{r,i} \otimes e_i \in \mfh^{\odot r} \otimes \clh, \quad r \ge 1.
\end{equation}
From the uncorrelatedness of the Wiener chaoses and the independence of the initial condition from the cylindrical Wiener process, 
\begin{equation} \label{eq:covar-oper-ut}
    \clt_{u(t)} = \sum_{r=1}^\infty \clt_{\cli_r(\widetilde{f}_{t,r})} + \sum_{k=1}^\infty \frac{q_k}{2\lambda_k}\big(1 - e^{-2\lambda_k t}\big)  e_k \otimes e_k
\end{equation}
Formally, the representation of $\{u(t)\}_{t \in \NN}$ in \eqref{eq:ut-wiener-chaos}, \eqref{eq:ut-kernels-tilde}, and \eqref{eq:covar-oper-ut} is seen to converge to a limiting process $u_\infty \in \clh$, which is the centered Gaussian process with the nondegenerate covariance operator
\begin{equation} \label{eq:covar-oper-u-infty}
    \clt_{u_\infty} = \sum_{k=1}^\infty \frac{q_k}{2\lambda_k} e_k \otimes e_k;
\end{equation}
For a qualitative statement in this setting, see, e.g., Theorem 8.20 in \cite{da2002second} or Theorem 5.2.3 in \cite{DalangSanzSole2026} for a closely related statement. The following theorem makes this convergence precise for our framework. We stress again that, the following result can yield explicit decay in $t$ with regard to the $d_2$ distance.

\begin{proposition}
    Let $\{u(t)\}_{t \in \NN}$ be the solution to the stochastic heat equation with initial conditions $u_0$, given in \eqref{eq:ut-wiener-chaos}. Let $u_\infty$ be the centered Gaussian random variable in $\clh$ with covariance operator \eqref{eq:covar-oper-u-infty}. Then, $u(t) \to u_\infty$ with regard to the $d_2$ distance (and so in distribution) as $t \to \infty$. 
\end{proposition}

\begin{proof}
    We verify the conditions of Theorem \ref{thm:abstract-fourth-moment-theorem-infinite-chaos}. That the limiting covariance operator in \eqref{eq:covar-oper-u-infty} is in $\cls_1(\clh)$ and nondegenerate is ensured by the summability $\sum_k \frac{q_k}{\lambda_k} < \infty$, yielding immediately Condition \ref{item1.1:thm-infinite-expansions-trace-sec-mom}. We verify now Condition \ref{item1:thm-infinite-expansions-covar-oper}. For $r \ge 2$, we set $\clt_{Z_r} = 0$, and so, from \eqref{eq:ut-wiener-chaos},
    \begin{multline}
        \|\clt_{\cli_r (\widetilde f_{t,r})} \|_{\cls_1(\clh)} = \tr (\clt_{\cli_r (\widetilde f_{t,r})}) = \sum_{i=1}^\infty \langle \clt_{\cli_r (\widetilde f_{t,r})} e_i, e_i \rangle_{\clh} \\= \sum_{i=1}^\infty \EE |I_r(\widetilde f_{t,r,i})|^2 
        = \sum_{i=1}^\infty r! e^{-2\lambda_i t} \|f_{r,i}\|^2_{\mfh^{\odot r}} \le e^{-2 \lambda_1 t} \|f_{r}\|^2_{\mfh^{\odot r} \otimes \clh} \to 0,
    \end{multline}
    as $t \to \infty$. For $r=1$, we set $\clt_{Z_1} = \clt_{u_\infty}$, use the independence of $u_0$ from $W$, \eqref{eq:ut-kernels-tilde}, \eqref{eq:covar-oper-ut}, and \eqref{eq:covar-oper-u-infty} to get
    \begin{equation}
    \begin{split}
    \| \clt_{\cli_1(\widetilde{f}_{t,1}+h_t)} - \clt_{u_\infty} \|_{\cls_1(\clh)} 
    &=\| \clt_{\cli_1(\widetilde{f}_{t,1})} + \clt_{\cli_1(h_t)} - \clt_{u_\infty} \|_{\cls_1(\clh)} \\
    &\le \| \clt_{\cli_1(\widetilde{f}_{t,1})}\|_{\cls_1(\clh)} + \|\sum_{k=1}^\infty \frac{q_k}{2\lambda_k} e^{-2\lambda_k t} e_k \otimes e_k \|_{\cls_1(\clh)} \\
&\le e^{-2 \lambda_1 t} \|f_{1}\|^2_{\mfh \otimes \clh} + e^{-2\lambda_1 t} \sum_{k=1}^\infty \frac{q_k}{2\lambda_k} \to 0.
    \end{split}
    \end{equation}
The first quantity in the last line goes to 0 with the same considerations as before. The convergence of the second quantity follows by the summability of the series.

    For condition \ref{item2:thm-infinite-expansions-contr}, fix $r \ge 2, i \in \NN, m = 1,\dots, r-1$, then 
    \begin{equation}
        \| \widetilde f_{t,r,i} \otimes_m \widetilde f_{t,r,i} \|_{\mfh^{\otimes 2(r-m)}} \le e^{-2\lambda_i t} \| f_{r,i} \|_{\mfh^{\otimes r}}^2 \to 0,
    \end{equation}
    as $t \to \infty$. Finally, since 
    \[
    \sum_{r = 1}^\infty r! \|f_{r}\|^2_{\mfh^{\otimes r} \otimes \clh} < \infty,
    \]
    it follows that 
    \[
    \sup_{t \in \NN} \sum_{r \ge N+1}  r! \|\widetilde f_{t,r}\|^2_{\mfh^{\otimes r} \otimes \clh} = \sup_{t \in \NN} \sum_{r \ge N+1} \sum_{i=1}^\infty e^{-2\lambda_i t} r! \|f_{r,i}\|^2_{\mfh^{\otimes r}} \le \sum_{r \ge N+1} r! \|f_{r}\|^2_{\mfh^{\otimes r} \otimes \clh} \to 0,
    \]
    which verifies Condition \ref{item3:thm-infinite-expansions-tail} and the assumptions and yields the desired bounds. 
\end{proof}

\section{Auxiliary Lemmas}

\begin{lemma}
  \label{lem:4}
  Let $h \in \clc^j_b(\clh)$ and suppose $\| h \|_{\clc_b^j(\clh)} \leq 1$, $j=1,2$. Then, the (Stein solution) operator $f_h$ defined in \eqref{eq:Stein-solution} admits two Fr\'echet derivatives. Moreover, 
\begin{equation}
  \label{eq:8}
  \sup_{x \in \clh} \| \Deriv^j_F f_h(x) \|_{\op(\cll(\clh^{\otimes j}:\RR))}
  \leq
  \frac{1}{j}, \qquad j = 1,2.
\end{equation}
\end{lemma}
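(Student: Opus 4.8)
The plan is to differentiate the Mehler representation \eqref{eq:Stein-solution} of the Stein solution term by term. Since $\clc_b^j(\clh)\subset\Lip_b^1(\clh)$, the results of Shih \cite{shih2011stein} guarantee that $f_h$ is well defined by $f_h(x)=-\int_0^\infty\big(P_t h(x)-\EE[h(Z)]\big)\,dt$ with $P_t$ the Mehler semigroup \eqref{eq:Pt-shih}, so it remains to show this expression admits two Fr\'echet derivatives in $x$ and to bound their operator norms.

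First I would handle a single Mehler operator. For fixed $t>0$ and $j\in\{1,2\}$, the chain rule applied under the Gaussian integral should give, for $v_1,\dots,v_j\in\clh$,
\begin{equation}
D_F^j\big(P_t h\big)(x)[v_1,\dots,v_j]=e^{-jt}\int_{\clh}D_F^j h\big(e^{-t}x+\sqrt{1-e^{-2t}}\,y\big)[v_1,\dots,v_j]\,\gamma_Z(dy).
\end{equation}
I would justify this by expanding the increments of $h$ along line segments via the fundamental theorem of calculus, inserting them into the Gaussian integral, and passing to the limit by dominated convergence; the dominating function is constant in $y$ because $\sup_{u\in\clh}\|D_F^j h(u)\|_{\op}\le\|h\|_{\clc_b^j(\clh)}\le1$. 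Bounding the integrand by $\|v_1\|_{\clh}\cdots\|v_j\|_{\clh}$ and taking the supremum over unit vectors then yields the pointwise estimate
\begin{equation}\label{eq:plan-Pt-bound}
\sup_{x\in\clh}\big\|D_F^j(P_t h)(x)\big\|_{\op(\cll(\clh^{\otimes j}:\RR))}\;\le\;e^{-jt}\,\|h\|_{\clc_b^j(\clh)}\;\le\;e^{-jt},\qquad j=1,2.
\end{equation}

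Next I would differentiate under the outer time integral. Integrability of $t\mapsto P_t h(x)-\EE[h(Z)]$ is part of Shih's construction (and can also be checked directly: since $h$ is Lipschitz with constant at most $\|h\|_{\clc_b^j(\clh)}\le1$, one has $|P_t h(x)-\EE[h(Z)]|\le e^{-t}\|x\|_{\clh}+\big(1-\sqrt{1-e^{-2t}}\,\big)\EE[\|Z\|_{\clh}]=O(e^{-t})$), and \eqref{eq:plan-Pt-bound} provides, for $j=1,2$, a $t$-integrable majorant $e^{-jt}$ for the $j$-th $x$-derivative of the integrand, uniformly in $x$. The standard theorem on differentiating parameter-dependent integrals then shows that $f_h$ is twice Fr\'echet differentiable with $D_F^j f_h(x)=-\int_0^\infty D_F^j(P_t h)(x)\,dt$, so that
\begin{equation}
\sup_{x\in\clh}\big\|D_F^j f_h(x)\big\|_{\op(\cll(\clh^{\otimes j}:\RR))}\le\int_0^\infty e^{-jt}\,dt=\frac1j,\qquad j=1,2,
\end{equation}
which is \eqref{eq:8}.

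I expect the main obstacle to be the two interchanges of differentiation and integration, rather than any genuinely hard estimate. The more delicate of the two is upgrading the pointwise directional identity for $D_F^j(P_t h)$ to honest (uniform) Fr\'echet differentiability, and -- for $j=1$, where the identity involves the $\clh$-valued Bochner integral of $y\mapsto\nabla_F h(e^{-t}x+\sqrt{1-e^{-2t}}\,y)$ -- verifying measurability of that integrand; this I would obtain from separability of $\clh$ together with the fact that each scalar map $y\mapsto D_F h(\cdots)[w]$ is a pointwise limit of continuous functions of $y$, hence Borel, so that the Pettis measurability theorem applies. Everything else is routine bookkeeping, and the exponential factors $e^{-jt}$ make the resulting $t$-integrals converge to exactly $1/j$.
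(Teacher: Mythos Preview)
Your proposal is correct and follows essentially the same route as the paper: conjecture the derivative by formally passing $D_F^j$ through the Mehler representation to pick up the factor $e^{-jt}$, justify the interchange via dominated convergence using $\sup_u\|D_F^j h(u)\|_{\op}\le 1$, and then integrate $e^{-jt}$ over $t$ to obtain $1/j$. The only cosmetic difference is that you split the argument into two stages (first differentiate $P_t h$ in $x$, then differentiate the $t$-integral), whereas the paper writes down the full candidate $D_F^j f_h$ at once and checks the Fr\'echet limit directly against the double integral with majorant $2e^{-u}\|z\|_{\clh}$; the estimates and the final bound are identical.
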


\begin{proof}
We begin by conjecturing the form of the Fr\'echet derivative $D_F f_h(x)$. We then verify, using the definition, that this is indeed the Fr\'echet derivative of $f_h$ whenever $h \in \clc_b^2(\clh)$.

First, set $g_u: \clh \to \clh, g_u(x) = e^{-u}x + \sqrt{1-e^{-2u}} y$, for $u \geq 0$ and fixed $y \in \clh$. Then, $D_F g_u(x)[z] = e^{-u}z$, i.e., for all $u \ge 0$, this is the multiplication operator associated to the constant function $e^{-u}$.
Since $h$ is Fr\'echet differentiable and by the chain rule for Fr\'echet derivatives (see p.\ 337 in \cite{Lang}), we get
\begin{align*}
&D_F\Big( h (g_u(x))\Big)[z]
= \Big( (D_F h)(g_u(x)) \Big) [D_F g_u(x)[z]]
\\&= \Big( (D_F h)(g_u(x)) \Big)[e^{-u}z] 
= e^{-u} \Big( (D_F h)(g_u(x)) \Big)[z].
\end{align*}
The last identity follows since $D_F h(x)[z]$ is linear in $z$.

By exchanging integration and Fr\'echet differentiation, this leads us to defining the candidate derivative
\begin{align}
  D_F f_h(x)[z] 
  &\doteq -\int_0^\infty e^{-u} \int_{\clh} 
    (D_F h)(e^{-u}x + \sqrt{1-e^{-2u}} y)[z]  \gamma_Z(dy) du, 
    \label{eq:Dfh-form}
\end{align}
We formally verify \eqref{eq:Dfh-form} by using the definition of the Fr\'echet derivative and showing
\begin{equation} \label{eq:frech}
    \lim_{||z||_{\clh} \to 0} \frac{|f_h(x+z) - f_h(x) -D_F f_h(x)[z]|}{||z||_{\clh}} = 0.
\end{equation}
First, observe that
\begin{multline} \label{eq:128}
    |f_h(x+z) - f_h(x) -D_F f_h(x)[z]| \\\le \int_0^\infty \int_{\clh} \bigg| h(e^{-u}(x+z) + \sqrt{1-e^{-2u}} y)  - h(e^{-u}x + \sqrt{1-e^{-2u}} y) \\
    - (D_F h)(e^{-u}x + \sqrt{1-e^{-2u}} y)[e^{-u}z]\bigg| \gamma_Z(dy) du.
\end{multline}
By the definition of the Fr\'echet differentiability of $h$ (cf. \eqref{eq:frech} with $h$ replacing $f_h$, $e^{-u}x + \sqrt{1-e^{-2u}} y$ replacing $x$, and $e^{-u} z$ replacing $z$), it follows that, $\gamma_Z \otimes du$ a.s.,
\begin{multline}
    \lim_{\|z\|_{\clh} \to 0} \frac{1}{\|z\|_{\clh}} \left(  \bigg| h(e^{-u}(x+z) + \sqrt{1-e^{-2u}} y)  - h(e^{-u}x + \sqrt{1-e^{-2u}} y)  \right. \\ \left.
    - (D_F h)(e^{-u}x + \sqrt{1-e^{-2u}} y)[e^{-u}z]\bigg|  \right) =0. \label{eq:129}
\end{multline}
Recalling $g_u$, we rewrite \eqref{eq:129} as
\begin{align*}
    &
    | h(g_u(x+z))  - h(g_u(x)) - (D_Fh)(g_u(x))[e^{-u}z]| 
    \\ &\le 
    | h(g_u(x+z))  - h(g_u(x))| 
        + | (D_Fh)(g_u(x))[e^{-u}z]| 
    \\ &\le 
    \|h \|_{\Lip} \|e^{-u} z\|_{\clh} 
    + \sup_{x \in \clh} \|(D_Fh)(x)\|_{\op(\cll(\clh:\RR))} \|e^{-u}z\|_{\clh} 
    \\ &\le 
    e^{-u} \left( \|h \|_{\Lip} \| z\|_{\clh} + \sup_{x \in \clh} \|(D_F h)(x)\|_{\op(\cll(\clh:\RR))} \|z\|_{\clh} \right)
      \le 2 e^{-u}  \|z\|_{\clh}.
\end{align*}
The second inequality follows by the fact that $h$ is Lipschitz since $h \in \clc_b^1(\clh)$, and the last by the norm in \eqref{eq:norm-def-cb2}. 

The quantity in the last line is integrable with regard to $du \otimes \gamma_Z$. Consequently, by combining \eqref{eq:128}, \eqref{eq:129}, and the dominated convergence theorem, \eqref{eq:frech} follows. Thus $f_h$ is Fr\'echet differentiable with derivative \eqref{eq:Dfh-form}.

Then the bound in \eqref{eq:8} for $j = 1$ follows immediately from the form of $D_F f_h(x)$ in \eqref{eq:Dfh-form} and the respective bound for $h$ since
\begin{multline}
    \sup_{x \in \clh} \|D_F f_h(x)\|_{\op(\cll(\clh:\RR))} \le \int_0^\infty e^{-u} \int_{\clh} \sup_{x \in \clh} \|(D_F h)(e^{-u}x + \sqrt{1-e^{-2u}} y)\|_{\op(\cll(\clh:\RR))} \gamma_Z(dy) du \\
    \le \sup_{x \in \clh} \|D_F h(x)\|_{\op(\cll(\clh:\RR))} \le 1.
\end{multline}
Similarly, the candidate for the second derivative is given by
\begin{equation} \label{eq:D2fh-form}
\begin{split}
    D_F ^2 f_h(x)[z_1,z_2] \doteq - \int_0^\infty e^{-2u} \int_{\clh} (D_F ^2 h)(e^{-u}x + \sqrt{1-e^{-2u}} y)[z_1,z_2] \gamma_Z(dy) du, \quad z_1,z_2 \in \clh.
\end{split}
\end{equation}
Given $h \in \clc_b^2(\clh)$ and $\| h \|_{\clc_b^2(\clh)} \leq 1$, the same arguments as for the first derivative then show that $f_h$ is twice Fr\'echet differentiable with second derivative $D_F ^2 f_h$, as well as the bound in \eqref{eq:8} for $j = 2$. 
\end{proof}

\begin{remark} \label{rmk:frechet-vs-gross-dif}
    \begin{enumerate}
        \item We note that, in fact, $h \in \clc_b^2(\clh)$ is a strong condition. For example, in important separable Banach spaces such as the space of continuous trajectories $C([0,1])$, there are no non-trivial, Fr\'echet differentiable functions with bounded support; see the discussion in Section 5.4 of \cite{Bogachev1998}. Despite this, it turns out that the class $\clc_b^2(\clh)$ is broad enough to metrize weak convergence whenever $\clh$ is a Hilbert space; see \cite{BasBurCamPec25}. \label{rmk:frechet-vs-gross-dif-item1}
        \item In the statement above, we do not claim that $f_h \in \clc_b^2(\clh)$. This is because, the claim that $\sup_{x \in \clh} |f_h(x)| < \infty$ is not immediate, even if $h \in \clc_b^2(\clh)$. Nonetheless, it turns out that it is not required for the aims of this article.
    \end{enumerate}
\end{remark}

\begin{lemma} \label{le:analogue-Th-2.6}
Let $F,G\in \mathbb{D}^{1,2}(\clh) \subset L^2(\Omega:\clh)$, and set
$\Gamma(F,G) = \langle D_M F, D_M G\rangle_{\mathfrak H}$. Then, 
    \begin{enumerate}[label=(\roman*)]
    \item     \label{item1:Dirichlet-Th-2.6} $\Gamma(\cdot,\cdot)$ is bilinear, almost surely positive (i.e., $\langle \Gamma(F,F) u, u \rangle_{\clh} \ge 0$), and 
\begin{equation}
  \Gamma(F,G) = (\Gamma(G,F))^*.
\end{equation}
    \item     \label{item2:Dirichlet-Th-2.6} Let $\varphi,\psi:\clh \to \clh$ be continuously Fréchet-differentiable such that, for some constant $C$,
    \[
    \sup_{x \in \clh} \|\varphi(x) \|_{\clh} \le C, \; \sup_{x \in \clh} \|D_F \varphi(x) \|_{\op} \le C, \quad \sup_{x \in \clh} \|\psi(x) \|_{\clh} \le C,\; \sup_{x \in \clh} \|D_F \psi(x) \|_{\op} \le C.
    \]
Then, for $F,G\in \mathbb{D}^{1,2}(\clh)$,
\begin{equation}
      \Gamma(\varphi(F),\psi(G))
      =
      \nabla_F \psi(G) \Gamma(F,G)(\nabla_F \varphi(F))^*.
\end{equation}
    \item     \label{item3:Dirichlet-Th-2.6}
For all $F,G \in \mathrm{dom}(-L)$,
\begin{equation}
      \mathbb E\big[ \tr_{\clh}\Gamma(F,G)\big]
  = \mathbb E\big[ \langle D_M F, D_M G\rangle_{\mathfrak H \otimes \clh}\big]
  = - \mathbb E\langle LF,  G\rangle_\clh
  = - \mathbb E\langle F,  LG\rangle_\clh.
\end{equation}

\item \label{item1:lem:Gamma-covariance}
Let $F$ admit the finite chaos expansion in \eqref{eq:finite-chaos-expansions} (without the dependence on $n$). Then, $\Gamma(F,-L^{-1}F) \in L^1(\Omega: \mathcal S_1(\clh))$ and
\begin{equation} \label{item1:lem:Gamma-covariance:eq:positivity-exp}
    \EE\big[ \Gamma(F,-L^{-1}F)\big] = \clt_F, \quad \text{i.e., }   \big\langle \EE\big[ \Gamma(F,-L^{-1}F)\big] u, v \big\rangle_{\clh}
    =
    \EE\big[ \langle F,u\rangle_{\clh} \, \langle F,v\rangle_{\clh} \big], 
\end{equation}
for all $u,v \in \clh$, where $\clt_F$ is the covariance operator of $F$.

\item \label{item2:lem:Gamma-covariance}
Let $F$ admit the single chaos expansion \eqref{eq:seq-multiple-fixed-chaos} (without the dependence on $n$). Then,
\begin{equation} \label{eq:Gamma-positive-fixed-chaos}
    \Gamma(F,-L^{-1}F)
    = \frac{1}{p} \big\langle D_M F, D_M F \big\rangle_{\mfh} = \frac{1}{p} \Gamma(F,F).
\end{equation}
Moreover, $\Gamma(F,-L^{-1}F)$ is an a.s. nonnegative (positive semidefinite) operator on $\clh$.
    \end{enumerate}
\end{lemma}

\begin{proof}
\textit{Proof of \ref{item1:Dirichlet-Th-2.6}:}
Bilinearity is immediate, and so is a.s. positivity upon noticing that $\Gamma(F,F) = \| D_M F \|^2_{\mfh} \in \cll(\clh:\clh)$. For symmetry, fix $u,v \in \clh$. Then, it follows from
    \begin{align}
    \langle \Gamma(F,G)u,v\rangle_\clh
    &=
    \langle \langle D_M F, D_M G \rangle_{\mfh} u,v\rangle_\clh
    \\&=
    \langle \langle D_M F, u\rangle_\clh ,\langle v, D_M G \rangle_\clh \rangle_{\mfh}
    \\&=
    \langle \langle v, D_M G \rangle_\clh, \langle D_M F, u\rangle_\clh \rangle_{\mfh}
=\langle \Gamma(G,F)v,u\rangle_\clh  \label{eq:lemma-gamma-u-v},
    \end{align}
by symmetry of the inner products $\langle \cdot, \cdot \rangle_{\mfh} $ and $\langle \cdot, \cdot \rangle_{\clh} $.

\textit{Proof of \ref{item2:Dirichlet-Th-2.6}:}
By (4.10) of Lemma 4.7. in \cite{kruse2014strong} (with $q=0, p =2$), the Malliavin chain rule for $\clh$-valued random variables gives
\begin{equation}
    D_M (\varphi(F))=\nabla_F \varphi(F) D_M F \in \mfh \otimes \clh,\qquad D_M (\psi(G))=\nabla_F \psi(G) D_M G \in \mfh \otimes \clh,
\end{equation}
where $\nabla_F \varphi(F)$ and $\nabla_F \psi(G)$ are elements of $\cll(\clh:\clh)$. Then, 
\begin{align*}
\langle \Gamma(\varphi(F),\psi(G))u, v\rangle_\clh
&=
\big\langle \langle D_M  \varphi(F),u\rangle_\clh,  \langle D_M  \psi(G),v\rangle_\clh\big\rangle_{\mathfrak H}
\\
&=
\big\langle \langle \nabla_F \varphi(F)D_M F,u\rangle_\clh,  \langle \nabla_F \psi(G)D_M G,v\rangle_\clh\big\rangle_{\mathfrak H}
\\
&=
\big\langle \langle D_M F,(\nabla_F \varphi(F))^* u\rangle_\clh,  \langle D_M G,(\nabla_F \psi(G))^* v\rangle_\clh\big\rangle_{\mathfrak H}
\\
&=
\langle \Gamma(F,G)(\nabla_F \varphi(F))^* u, (\nabla_F \psi(G))^* v\rangle_\clh
\\
&=
\langle \nabla_F \psi(G) \Gamma(F,G)(\nabla_F \varphi(F))^* u, v\rangle_\clh,
\end{align*}
which yields
\begin{equation}
\Gamma(\varphi(F),\psi(G))
=\nabla_F \psi(G) \Gamma(F,G) (\nabla_F \varphi(F))^*.    
\end{equation}

\textit{Proof of \ref{item3:Dirichlet-Th-2.6}:}
Using that $\delta$ is the adjoint of $D_M $ \eqref{eq:2.24inPoincare}, we have that
\begin{equation} \label{eq:proof-item3-al1}
\mathbb E\langle D_M F, D_M G\rangle_{\mathfrak H\otimes \clh}=\mathbb E\langle \delta D_M F, G\rangle_\clh.
\end{equation}
By \eqref{eq:L-def} (see also Lemma 2.4. in \cite{vidotto2025functional}), we have $\delta D_M F=-LF$. This combined with \eqref{eq:proof-item3-al1}, then gives 
\begin{equation} \label{eq:proof2.6-1}
\mathbb E\langle D_M F, D_M G\rangle_{\mathfrak H\otimes \clh}=\mathbb E\langle \delta D_M F, G\rangle_\clh
=- \mathbb E\langle LF, G\rangle_\clh = - \EE \langle F, L G \rangle_\clh.
\end{equation}
To identify the left-hand side with $\mathbb E[\tr_{\clh}\Gamma(F,G)]$, fix an orthonormal basis $\{ e_j \}_{j \in \NN}$ of $\clh$ and note that
\begin{equation}
\tr_{\clh}\Gamma(F,G)
=\sum_{j=1}^{\infty}\langle \Gamma(F,G)e_j,e_j\rangle_\clh
=\sum_{j=1}^{\infty}\langle \langle D_MF,e_j\rangle_\clh,  \langle D_MG,e_j\rangle_\clh\rangle_{\mathfrak H}
=\langle D_M F, D_M G\rangle_{\mathfrak H\otimes \clh},
\end{equation}
Hence, using \eqref{eq:proof2.6-1}, we get
\begin{equation}
\mathbb E\big[\tr_{\clh} \Gamma(F,G)\big]
=\mathbb E\big[\langle D_M F,D_M G\rangle_{\mathfrak H\otimes \clh}\big]
=- \mathbb E\langle LF, G\rangle_\clh.
\end{equation}
Symmetry of $L$ gives also $- \mathbb E\langle F,LG\rangle_\clh$.

\textit{Proof of \ref{item1:lem:Gamma-covariance}:}
Since $F$ has a finite chaos expansion, it is automatically in $\mathbb{D}^{1,2}(\clh)$. Fix $u,v \in \clh$ and define the real-valued random variables
\[
F_u \doteq \langle F,u\rangle_{\clh} ,
\qquad
F_v \doteq \langle F,v\rangle_{\clh}.
\]
The calculations performed in \eqref{eq:lemma-gamma-u-v} show that
\begin{align*}
\big\langle \EE\big[ \Gamma(F,-L^{-1}F)\big] u, v \big\rangle_{\clh}
= \EE\big[
\big\langle 
\langle D_M F, u\rangle_{\clh}, 
\langle D_M (-L^{-1} F), v\rangle_{\clh}
\big\rangle_{\mfh}
\big]
= \EE\big[ \langle D_M F_u, D_M (-L^{-1} F_v) \rangle_{\mfh} \big],
\end{align*}
where the last line follows by the linearity of $D_M$ and $L^{-1}$. In the last formula, the operators $D_M$ and $L^{-1}$ acting on real-valued random variables are interpreted in the classical sense, e.g., as in Proposition 2.7.4 and Definition 2.8.10 of \cite{nourdin2012normal}.

By the classical real-valued identity for $F_u,F_v$ with finite chaos expansions (see, e.g., the display below (6.2.2) in \cite{nourdin2012normal}),
\[
\EE\big[ \langle D_M F_u, D_M L^{-1} F_v \rangle_{\mfh} \big]
= \EE[F_u F_v]
= \EE\big[ \langle F,u\rangle_{\clh} \, \langle F,v\rangle_{\clh} \big].
\]
In the last two displays above, we have slightly abused the notation, by not distinguishing between the Hilbert- and real-valued versions of the operators $D_M$ and $L^{-1}$. Thus \eqref{item1:lem:Gamma-covariance:eq:positivity-exp} holds. The fact that $F \in L^2(\Omega:\clh)$ implies that $\clt_Z$ is trace class, which in turn says that $\Gamma(F,-L^{-1}F) \in L^1(\Omega:\cls_1(\clh))$.

\textit{Proof of \ref{item2:lem:Gamma-covariance}:}
 If $F$ lives in the $p$-th chaos, then $L^{-1}F = -\frac{1}{p} F$, so
\[
\Gamma(F,-L^{-1}F)
= \left\langle D_M F, - D_M L^{-1}F \right\rangle_{\mfh}
= \frac{1}{p} \big\langle D_M F, D_M F \big\rangle_{\mfh} = \frac{1}{p} \Gamma(F,F)
\]
That this operator is a.s. nonnegative follows from item \ref{item1:Dirichlet-Th-2.6}.
\end{proof}

For the lemma below, we refer to \cite{janson2015higher} for notions of higher-order moments of Hilbert space-valued random variables. In particular, we mention that, if $Z$ is an $\clh$-valued random variable such that $\EE \|Z\|_{\clh}^k<\infty$, then the weak $k$-th moment defined by $\EE ( x_1^*(Z) \dots x_k^*(Z)) \in \RR$ exists, where $k \in \NN$ and $x_1^*,\dots x_k^* \in \clh^*$.

\begin{lemma} \label{lemma:equiv-weak-moment-all-bases}
Let $k=2,4$ and $\{F_n\}_{n \in \NN}$ be a sequence of centered, $\mathcal H$-valued random variables and $Z$ be a centered, $\mathcal H$-valued random variable such that $\EE \|Z\|_{\clh}^k < \infty$ and $\EE \|F_n\|_{\clh}^k < \infty$. The following are equivalent:  
    \begin{enumerate}[label=(\roman*)]
    \item the $k$-th weak moments of $\{F_n\}$ converge to the $k$-th weak moments of $Z$, i.e., $\EE ( x^*_1(F_n)\dots x^*_k(F_n)) \to \EE ( x^*_1(Z)\dots x^*_k(Z))$, for all $x_1^*,\dots,x_k^* \in \clh^*$.
    \label{item1:momentcond}
    \item $\EE \left[ \langle F_n, e_i \rangle_\clh^k\right] \to \EE \left[ \langle Z, e_i \rangle_\clh^k\right]$, for all $i \in \NN$ and for all orthonormal bases $\{e_i\}_{i \in \NN}$.
    \label{item2:momentcond}
    \end{enumerate}
\end{lemma}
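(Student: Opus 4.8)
\textbf{Proof plan for Lemma \ref{lemma:equiv-weak-moment-all-bases}.}

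The plan is to prove the two implications separately, relying on a polarization argument in one direction and a density-of-finite-rank-functionals argument in the other. Throughout, fix $k \in \{2,4\}$.

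\emph{Direction \ref{item2:momentcond} $\Rightarrow$ \ref{item1:momentcond}.} First I would reduce from general functionals $x_j^*$ to the coordinate functionals $x \mapsto \langle x, e_i \rangle_\clh$. By the Riesz representation theorem, each $x_j^*$ corresponds to a unique vector $v_j \in \clh$, so that $x_j^*(X) = \langle X, v_j \rangle_\clh$. Since \ref{item2:momentcond} is assumed to hold along \emph{every} orthonormal basis, I would, for a given choice of $v_1, \dots, v_k$, first handle the case $v_1 = \dots = v_k = v$ (a single unit vector): extend $v/\|v\|_\clh$ to an orthonormal basis $\{e_i\}$ with $e_1 = v/\|v\|_\clh$, apply \ref{item2:momentcond} for that basis and the index $i=1$, and scale by $\|v\|_\clh^k$ to get $\EE\left[\langle F_n, v\rangle_\clh^k\right] \to \EE\left[\langle Z, v\rangle_\clh^k\right]$ for every fixed $v \in \clh$. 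Then I would pass from the ``diagonal'' weak moments $\EE\left[\langle X, v\rangle_\clh^k\right]$ to the general mixed ones $\EE\left[\langle X, v_1\rangle_\clh \cdots \langle X, v_k\rangle_\clh\right]$ by polarization: the mixed moment is a fixed finite linear combination (with coefficients independent of $n$ and of the law) of diagonal moments $\EE\left[\langle X, v_{S}\rangle_\clh^k\right]$ where $v_S \doteq \sum_{j \in S} \pm v_j$ ranges over signed partial sums, via the identity expressing a symmetric $k$-linear form in terms of its diagonal restriction. Applying the already-established diagonal convergence to each of the finitely many terms $v_S$ yields convergence of the mixed moment, which is exactly \ref{item1:momentcond}.

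\emph{Direction \ref{item1:momentcond} $\Rightarrow$ \ref{item2:momentcond}.} Fix an arbitrary orthonormal basis $\{e_i\}_{i \in \NN}$ and an index $i$. The functional $x \mapsto \langle x, e_i\rangle_\clh$ lies in $\clh^*$, so taking $x_1^* = \dots = x_k^* = \langle \cdot, e_i \rangle_\clh$ in \ref{item1:momentcond} directly gives $\EE\left[\langle F_n, e_i\rangle_\clh^k\right] \to \EE\left[\langle Z, e_i\rangle_\clh^k\right]$. This direction is essentially immediate.

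\emph{Main obstacle.} The routine-looking but genuinely delicate step is the polarization in the first direction: one must check that the symmetric $k$-linear functional $(v_1,\dots,v_k) \mapsto \EE\left[\langle X, v_1\rangle_\clh \cdots \langle X, v_k\rangle_\clh\right]$ is well-defined and finite (which follows from $\EE\|X\|_\clh^k < \infty$ and Cauchy--Schwarz / H\"older, applied to both $X = F_n$ uniformly and $X = Z$) and that the polarization identity writing it in terms of $k$-th diagonal values is valid — for $k=2$ this is the trivial identity $2\langle X,u\rangle\langle X,v\rangle = \langle X,u+v\rangle^2 - \langle X,u\rangle^2 - \langle X,v\rangle^2$, and for $k=4$ one uses the standard polarization of a symmetric quartic form, e.g. $\langle X,v_1\rangle\langle X,v_2\rangle\langle X,v_3\rangle\langle X,v_4\rangle = \frac{1}{4!}\sum_{\emptyset \neq S \subseteq \{1,2,3,4\}} (-1)^{4-|S|}\,\EE$-free version applied pointwise, then take $\EE$. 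Since $k \in \{2,4\}$ only, these are finite explicit identities and no convergence or approximation in infinite dimensions is needed beyond what is already in hand; the key point to emphasize is that the coefficients in the polarization identity do not depend on $n$, so termwise convergence of the finitely many diagonal moments transfers to the mixed moment.
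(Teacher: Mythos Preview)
Your proposal is correct and follows essentially the same approach as the paper: the direction \ref{item1:momentcond} $\Rightarrow$ \ref{item2:momentcond} is immediate by specializing the functionals, and the direction \ref{item2:momentcond} $\Rightarrow$ \ref{item1:momentcond} proceeds by extending an arbitrary unit vector to an orthonormal basis to obtain convergence of the diagonal moments, then recovering the mixed moments via the polarization identity for symmetric $k$-linear forms (the paper cites Nelson's identity for $k=4$, exactly as you outline). The only cosmetic discrepancy is your opening sentence mentioning a ``density-of-finite-rank-functionals argument,'' which you never actually use and which is not needed --- the easy direction is a one-line specialization, as you yourself write.
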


\begin{proof}
By the Riesz representation theorem, every $x^*\in\mathcal H^*$ can be identified with an element $x \in \clh$ so that
$x^*(h)=\langle h,x\rangle_{\clh}$ for all $h\in\mathcal H$. Define the quadratic and quartic forms
\begin{equation}
    Q_n(u)\doteq\EE \left[ \langle F_n,u\rangle_\clh^2\right], 
    \qquad R_n(u)\doteq\EE \left[ \langle F_n,u\rangle_\clh^4\right],
\end{equation}
and similarly $Q,R$ with $F_n$ replaced by $Z$.

\textit{\ref{item1:momentcond} $\Rightarrow$ \ref{item2:momentcond}:}
Fix an orthonormal basis $\{e_i\}_{i \in \NN}$ and choose $x_1^*=\cdots=x_k^*=\langle\cdot,e_i\rangle_\clh$. Then by (i),
\begin{equation}
   \EE \left[ \langle F_n,e_i\rangle^k_\clh\right]  
   \to  
   \EE \left[ \langle Z,e_i\rangle^k_\clh\right],
   \qquad k=2,4,
\end{equation}
which is exactly \ref{item2:momentcond}.

\textit{\ref{item2:momentcond} $\Rightarrow$ \ref{item1:momentcond}:}
Identify as above $x_k^* = x_k \in \clh, k = 2,4$. First, Condition \ref{item2:momentcond} for all bases means that for any unit vector $u\in\mathcal H$ we can choose a basis containing $u$, and hence
\begin{equation} \label{eq:conv-Q}
Q_n(u)\to Q(u),\qquad R_n(u)\to R(u).    
\end{equation}
Furthermore, any element $u \in \clh$ can be written as a scalar times a unit vector $u = \|u\|_{\clh} \frac{u}{\|u\|_{\clh}}$. Then the same convergences \eqref{eq:conv-Q} hold for any $u \in \clh$ .

For $k=2$, for any $x_1^*,x_2^* \in\mathcal H^*$,
\begin{equation}
    \EE x_1^*(F_n) x_2^*(F_n) = \EE\langle F_n,x_1\rangle_\clh \langle F_n,x_2 \rangle_\clh
    = \tfrac14\Big(Q_n(x_1+x_2)-Q_n(x_1-x_2)\Big).
\end{equation}
Since $Q_n\to Q$ pointwise, we obtain
\begin{equation}
    \EE \langle F_n,x_1 \rangle_\clh \langle F_n,x_2\rangle_\clh
    \to \EE\langle Z,x_1 \rangle_\clh \langle Z,x_2 \rangle_\clh = \EE x_1^*(Z) x_2^*(Z).
\end{equation}

For $k=4$, let $x_i^* \in \clh^*, i = 1,2,3,4$ and denote their respective representations $x_i \in \clh, i = 1,2,3,4$. Then,
\begin{equation}
M^{(4)}_n(x_1,x_2,x_3,x_4)\doteq \EE  \Big[\prod_{j=1}^4 \langle F_n,x_j\rangle_\clh \Big].
\end{equation}
Using the general polarization identity as in equation (3) of \cite{nelson2007probability}, we obtain
\begin{equation}
M^{(4)}_n(x_1,x_2,x_3,x_4)
=\frac{1}{4!}\sum_{r=1}^4(-1)^{4-r}  \sum_{\substack{S\subset\{1,2,3,4\}\\ |S|=r}}
R_n \Big(\sum_{j\in S} x_j\Big).
\end{equation}
Since $R_n(\cdot)\to R(\cdot)$ pointwise on $\mathcal H$, we may pass to the limit to get
\begin{equation}
M^{(4)}_n(x_1,x_2,x_3,x_4) \to
\frac{1}{4!}\sum_{r=1}^4(-1)^{4-r}  \sum_{\substack{S\subset\{1,2,3,4\}\\ |S|=r}}
R \Big(\sum_{j\in S} x_j\Big)
=\EE \Big[\prod_{j=1}^4 \langle Z,x_j\rangle_\clh \Big],
\end{equation}
which concludes the proof.
\end{proof}

The next lemma states sufficient conditions for convergence in trace class norm and goes back to \cite{arazy1981more}; see the theorem after the appendix therein. Although the results there are stated for operators on $\ell^2$, they hold true for operators on any separable Hilbert space by leveraging the isometric isomorphism between separable Hilbert spaces.

\begin{lemma}[Arazy, 1981] \label{lem:trace-condition}
Let $\clh$ be a separable Hilbert space and let $\{\clt_{F_n}\}_{n\in\mathbb{N}},\clt_Z\in \mathcal{S}_1(\clh)$ be non-negative trace-class operators. Assume
\begin{enumerate}
  \item $\tr(\clt_{F_n})\to \tr(\clt_Z)$,
  \label{lem:trace-condition-item1}
  \item $\clt_{F_n}\to \clt_Z$ weakly in $\mathcal{S}_1(\clh)$, i.e.,
  $\tr(A \clt_{F_n})\to \tr(A \clt_Z)$ for all $A\in\mathcal{L}(\clh)$.
  \label{lem:trace-condition-item2}
\end{enumerate}
Then,
\[
  \bigl\|\clt_{F_n}-\clt_Z\bigr\|_{\mathcal{S}_1(\clh)}  \xrightarrow[n\to\infty]{}  0 .
\]
\end{lemma}

For additional conditions ensuring convergence of covariance operators in trace class norm, we refer to \cite{kubrusly1986convergence}.

\paragraph{Acknowledgments:}
The authors are grateful to Giovanni Peccati for drawing their attention to issues in prior literature, including their own work. They also thank Amarjit Budhiraja and Chiranjib Mukherjee for valuable professional advice and guidance.

PZ's research was funded by the DFG under Germany's Excellence Strategy EXC 2044–390685587, Mathematics Münster: Dynamics–Geometry–Structure.

\small
\bibliographystyle{acm}
\bibliography{mybibliography}

\begin{thebibliography}{10}

\bibitem{arazy1981more}
{\sc Arazy, J.}
\newblock More on convergence in unitary matrix spaces.
\newblock {\em Proceedings of the American Mathematical Society 83}, 1 (1981), 44--48.

\bibitem{BasBurCamPec25}
{\sc Bassetti, F., Bourguin, S., Campese, S., and Peccati, G.}
\newblock A caveat on metrizing convergence in distribution on {H}ilbert spaces.
\newblock {\em arXiv preprint arXiv:2509.13427\/} (2025).

\bibitem{BigFerForZan24}
{\sc Bignamini, D.~A., Ferrari, S., Fornaro, S., and Zanella, M.}
\newblock Differentiability in infinite dimension and the {M}alliavin calculus.
\newblock {\em Probability Surveys 21\/} (2024), 28--66.

\bibitem{billingsley1999convergence}
{\sc Billingsley, P.}
\newblock {\em Convergence of Probability Measures}, 2nd~ed.
\newblock John Wiley \& Sons, New York, 1999.

\bibitem{Bogachev1998}
{\sc Bogachev, V.~I.}
\newblock {\em {G}aussian Measures}, vol.~62 of {\em Mathematical Surveys and Monographs}.
\newblock American Mathematical Society, Providence, RI, 1998.

\bibitem{Bou20}
{\sc Bourguin, S., and Campese, S.}
\newblock Approximation of {H}ilbert-valued {G}aussians on {D}irichlet structures.
\newblock {\em Electronic Journal of Probability 25\/} (2020), 1--30.

\bibitem{BouCamDan2024}
{\sc Bourguin, S., Campese, S., and Dang, T.}
\newblock Functional {G}aussian approximations on {H}ilbert-{P}oisson spaces.
\newblock {\em ALEA, Latin American Journal of Probability and Mathematical Statistics 21}, 1 (2024), 517--553.

\bibitem{bourguin2024spherical}
{\sc Bourguin, S., Durastanti, C., Marinucci, D., and Todino, A.~P.}
\newblock Spherical {P}oisson waves.
\newblock {\em Electronic Journal of Probability 29\/} (2024), 1--27.

\bibitem{cammarota2023quantitative}
{\sc Cammarota, V., Marinucci, D., Salvi, M., and Vigogna, S.}
\newblock A quantitative functional central limit theorem for shallow neural networks.
\newblock {\em Modern Stochastics: Theory and Applications 11}, 1 (2023), 85--108.

\bibitem{Cap2024}
{\sc Caponera, A.}
\newblock Asymptotics for isotropic {H}ilbert-valued spherical random fields.
\newblock {\em Bernoulli 30}, 3 (2024), 1723--1745.

\bibitem{ConJenKur19}
{\sc Conus, D., Jentzen, A., and Kurniawan, R.}
\newblock {Weak convergence rates of spectral Galerkin approximations for SPDEs with nonlinear diffusion coefficients}.
\newblock {\em The Annals of Applied Probability 29}, 2 (2019), 653--716.

\bibitem{conway2000course}
{\sc Conway, J.~B.}
\newblock {\em A Course in Operator Theory}.
\newblock American Mathematical Society, 2000.

\bibitem{CremersKadelkaLusin84}
{\sc Cremers, H., and Kadelka, D.}
\newblock On weak convergence of stochastic processes with {L}usin path spaces.
\newblock {\em Manuscripta Mathematica 45\/} (1984), 115--125.

\bibitem{cremers1986weak}
{\sc Cremers, H., and Kadelka, D.}
\newblock On weak convergence of integral functionals of stochastic processes with applications to processes taking paths in ${L}^{E}_p$.
\newblock {\em Stochastic Processes and their Applications 21}, 2 (1986), 305--317.

\bibitem{da2002second}
{\sc Da~Prato, G., and Zabczyk, J.}
\newblock {\em Second Order Partial Differential Equations in {H}ilbert Spaces}, vol.~293.
\newblock Cambridge University Press, 2002.

\bibitem{DalangSanzSole2026}
{\sc Dalang, R.~C., and Sanz-Solé, M.}
\newblock {\em Stochastic Partial Differential Equations, Space-time White Noise and Random Fields}.
\newblock Springer Monographs in Mathematics. Springer Nature Switzerland AG, Cham, 2026.

\bibitem{dukZou24}
{\sc D\"uker, M., and Zoubouloglou, P.}
\newblock {B}reuer-{M}ajor theorems for {H}ilbert space-valued random variables.
\newblock {\em arXiv preprint arXiv:2405.11452\/} (2024).

\bibitem{favaro2023quantitative}
{\sc Favaro, S., Hanin, B., Marinucci, D., Nourdin, I., and Peccati, G.}
\newblock Quantitative {CLT}s in deep neural networks.
\newblock {\em Probability Theory and Related Fields 191}, 3 (2025), 933--977.

\bibitem{GinLeo80}
{\sc Gin\'{e}, E., and Le\'{o}n, J.~R.}
\newblock On the central limit theorem in {H}ilbert space.
\newblock {\em Stochastica 4}, 1 (1980), 43--71.

\bibitem{Grinblat:1976:LT}
{\sc Grinblat, L.~S.}
\newblock A limit theorem for measurable random processes and its applications.
\newblock {\em Proceedings of the American Mathematical Society 61}, 2 (1976), 371--376.

\bibitem{janson2015higher}
{\sc Janson, S., and Kaijser, S.}
\newblock Higher moments of {B}anach space valued random variables.
\newblock {\em Memoirs of the American Mathematical Society 238}, 1127 (2015), 1--110.

\bibitem{kruse2014strong}
{\sc Kruse, R.}
\newblock {\em Strong and Weak Approximation of Semilinear Stochastic Evolution Equations}.
\newblock Springer, 2014.

\bibitem{kubrusly1986convergence}
{\sc Kubrusly, C.~S.}
\newblock On convergence of nuclear and correlation operators in {H}ilbert space.
\newblock {\em Matem{\'a}tica Aplicada e Computacional 5}, 3 (1986), 265--282.

\bibitem{Lang}
{\sc Lang, S.}
\newblock {\em Real and Functional Analysis}.
\newblock Springer, 1993.

\bibitem{leon1998stochastic}
{\sc Le{\'o}n, J.~A., and Nualart, D.}
\newblock Stochastic evolution equations with random generators.
\newblock {\em Annals of Probability 26}, 1 (1998), 149--186.

\bibitem{Lord_Powell_Shardlow_2014}
{\sc Lord, G.~J., Powell, C.~E., and Shardlow, T.}
\newblock {\em An Introduction to Computational Stochastic PDEs}.
\newblock Cambridge Texts in Applied Mathematics. Cambridge University Press, 2014.

\bibitem{nelson2007probability}
{\sc Nelson, E.}
\newblock Probability theory and {E}uclidean field theory.
\newblock In {\em Constructive Quantum Field Theory}. Springer, 2007, pp.~94--124.

\bibitem{nourdin2012normal}
{\sc Nourdin, I., and Peccati, G.}
\newblock {\em Normal Approximations with {M}alliavin {C}alculus: {F}rom {S}tein's Method to Universality}.
\newblock Cambridge University Press, 2012.

\bibitem{Nua06book}
{\sc Nualart, D.}
\newblock {\em The {M}alliavin Calculus and Related Topics}.
\newblock Springer-Verlag, Berlin, 2006.

\bibitem{nualart2005central}
{\sc Nualart, D., and Peccati, G.}
\newblock Central limit theorems for sequences of multiple stochastic integrals.
\newblock {\em Annals of Probability 33}, 1 (2005), 177--193.

\bibitem{PeccatiTudor2005}
{\sc Peccati, G., and Tudor, C.~A.}
\newblock Gaussian limits for vector-valued multiple stochastic integrals.
\newblock In {\em Séminaire de Probabilités XXXVIII}, vol.~1857 of {\em Lecture Notes in Mathematics}. Springer, Berlin, Heidelberg, 2005, pp.~247--262.

\bibitem{Prokhorov:1956:CRP}
{\sc Prokhorov, Y.~V.}
\newblock Convergence of random processes and limit theorems in probability theory.
\newblock {\em Theory of Probability and Its Applications 1\/} (1956), 157--214.
\newblock Translated from Teoriya Veroyatnostei i ee Primeneniya, 1, 157--214 (1956).

\bibitem{schatten1960norm}
{\sc Schatten, R.}
\newblock Norm ideals.
\newblock In {\em Norm Ideals of Completely Continuous Operators}. Springer, 1960, pp.~54--79.

\bibitem{scholkopf2001generalized}
{\sc Sch{\"o}lkopf, B., Herbrich, R., and Smola, A.~J.}
\newblock A generalized representer theorem.
\newblock In {\em International Conference on Computational Learning Theory\/} (2001), Springer, pp.~416--426.

\bibitem{shih2011stein}
{\sc Shih, H.-H.}
\newblock On {S}tein's method for infinite-dimensional {G}aussian approximation in abstract {W}iener spaces.
\newblock {\em Journal of Functional Analysis 261}, 5 (2011), 1236--1283.

\bibitem{smale2005shannon}
{\sc Smale, S., and Zhou, D.-X.}
\newblock Shannon sampling {II}: Connections to learning theory.
\newblock {\em Applied and Computational Harmonic Analysis 19}, 3 (2005), 285--302.

\bibitem{steinwart2008support}
{\sc Steinwart, I., and Christmann, A.}
\newblock {\em Support Vector Machines}.
\newblock Information Science and Statistics. Springer New York, 2008.

\bibitem{tuo2024asymptotic}
{\sc Tuo, R., and Zou, L.}
\newblock Asymptotic theory for linear functionals of kernel ridge regression.
\newblock {\em arXiv preprint arXiv:2403.04248\/} (2024).

\bibitem{vidotto2025functional}
{\sc Vidotto, A., and Zheng, G.}
\newblock Functional second-order {G}aussian {P}oincaré inequalities.
\newblock {\em arXiv preprint arXiv:2506.13571\/} (2025).
\newblock Version 2 submitted June 23, 2025.

\end{thebibliography}

\end{document}